\documentclass[12pt]{amsart}
\usepackage{amsmath,amssymb,amsthm,amsfonts}
\usepackage{cite}
\usepackage{fullpage}
\usepackage[colorlinks=true]{hyperref}
\usepackage{mathrsfs}

\newtheorem{theorem}{Theorem}[section]
\newtheorem{proposition}[theorem]{Proposition}
\newtheorem{lemma}[theorem]{Lemma}
\newtheorem{corollary}[theorem]{Corollary}
\theoremstyle{remark}
\newtheorem{remark}{Remark}[section]

\theoremstyle{definition}
\newtheorem{definition}[theorem]{Definition}

\allowdisplaybreaks
\numberwithin{equation}{section}

\newcommand{\R}{\mathbb{R}}

\keywords{Muskat problem, surface tension, self-similar solutions}
\subjclass[2020]{35R35, 35Q35, 35A01, 35A02}
\author{Lizhe Wan}
\address{Beijing International Center for Mathematical Research, Peking University}
\curraddr{}
\email{wanlizhe@pku.edu.cn}

\author{Jiaqi Yang}
\address{School of Mathematics and Statistics, Northwestern Polytechnical University}
\curraddr{}
\email{yjqmath@nwpu.edu.cn, yjqmath@163.com}

\begin{document}

\title{Self-similar solutions of the three-dimensional Muskat problem with surface tension}

\begin{abstract}
We construct a one-parameter family of small self-similar solutions
to the three-dimensional one-phase Muskat problem with surface
tension. The solutions have the form
\(\eta_\varepsilon(t,x)
=t^{1/3}U_\varepsilon(t^{-1/3}x)\)
and emanate from the conical initial data
\(\eta_\varepsilon(0,x)=\varepsilon|x|\).
The profiles are perturbations of the linear capillary regularization of
the cone, and we identify the leading quadratic correction. The proof
combines a raywise inverse estimate for the linear
similarity operator, a favorable high--high-to-low cancellation in
the quadratic term, and finite-order tame estimates for the
Dirichlet--Neumann operator on asymptotically conical graphs.
These estimates yield the solutions by a contraction argument and
show that the conical singularity is instantaneously rounded for
positive time.
\end{abstract}

\maketitle
%\tableofcontents

\section{Introduction}
We consider a three-dimensional incompressible fluid in a porous medium,
with a free upper boundary subject to surface tension. Our aim is to
construct self-similar solutions of the corresponding one-phase Muskat
problem for interfaces over \(\R^2\).
Let \(\Sigma_t\) denote the interface between the fluid and the air at time \(t\).
Throughout the paper, we assume that \(\Sigma_t\) is the graph of a function \(\eta(t,x)\):
\begin{equation*}
 \Sigma_t=\{(x,\eta(t,x)):x\in\R^2\}.
\end{equation*}
The fluid domain is then given by
\begin{equation*}
 \Omega_t=\{(x,y)\in\R^2\times\R:y<\eta(t,x)\}.
\end{equation*}
We normalize the permeability of the porous medium to one and take the constant pressure in the air phase to be zero.

The velocity \(u\) and pressure \(p\) satisfy Darcy's law and the incompressibility condition
\begin{equation*}
 \mu u+\nabla_{x,y}p=0, \qquad \operatorname{div}_{x,y}u=0 \quad\text{in }\Omega_t,
\end{equation*}
where \(\mu>0\) is the viscosity of the fluid.

Let
\begin{equation*}
 n=\frac{1}{\sqrt{1+|\nabla\eta|^2}}(-\nabla\eta,1)
\end{equation*}
denote the upward-pointing unit normal along \(\Sigma_t\).
The kinematic boundary condition is
\begin{equation*}
 \partial_t\eta=\sqrt{1+|\nabla\eta|^2}\,u\cdot n\big|_{\Sigma_t}.
\end{equation*}
The dynamic boundary condition states that the pressure jump is balanced by surface tension:
\begin{equation*}
 p=\sigma\kappa(\eta) \quad\text{on }\Sigma_t, \qquad \kappa(\eta)=-\operatorname{div}\left(\frac{\nabla\eta}{\sqrt{1+|\nabla\eta|^2}}\right),
\end{equation*}
where \(\sigma>0\) is the surface-tension coefficient.

\subsection{Formulation and self-similar scaling of the Muskat problem}
The Muskat problem admits both contour-dynamics and
Dirichlet--Neumann formulations; see, for instance,
\cite{MR4520423,MR3415681,MR3171344,MR2313156,MR2993754,
MR3608884,MR3071395}.

Following Nguyen~\cite{MR4131404} and Nguyen and Pausader~\cite{MR4090462},
we use the Dirichlet--Neumann operator \(G(\eta)\) associated with
\(\Omega_t\). For a Lipschitz graph and
\(f\in\dot H^{1/2}(\R^2)\), regarded as boundary data through the graph
parametrization, let \(\phi_f\) denote the energy harmonic extension. It is
unique at the level of its gradient and is characterized by
\begin{equation*}
 \Delta_{x,y}\phi_f=0 \quad\text{in }\Omega_t, \qquad
 \phi_f=f \quad\text{on }\Sigma_t, \qquad
 \nabla_{x,y}\phi_f\in L^2(\Omega_t).
\end{equation*}
For smooth \(\eta\) and \(f\), we define the Dirichlet--Neumann operator by
\begin{equation*}
 G(\eta)f:=\sqrt{1+|\nabla\eta|^2}\,\partial_n\phi_f\big|_{\Sigma_t}.
\end{equation*}
For the rough data used below, the same notation denotes the weak conormal trace of the energy extension.

The following reduction is proved in Appendix~B of
\cite{MR4131404}. It applies either to sufficiently regular solutions or,
as in our construction, in the energy graph formulation whenever the
relevant traces are defined.

\begin{proposition}[\hspace{1sp}\cite{MR4131404}]
If \((u,p,\eta)\) is a solution of the one-phase Muskat problem in the preceding class, then \(\eta\) satisfies
\begin{equation} \label{OneMuskat}
 \partial_t\eta=-\frac{\sigma}{\mu}G(\eta)\kappa(\eta).
\end{equation}
Conversely, a solution \(\eta\) of \eqref{OneMuskat} for which the energy Dirichlet--Neumann operator is defined determines a solution of the one-phase Muskat problem whose free surface is parametrized by \(\eta\).
\end{proposition}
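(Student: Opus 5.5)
The plan is to exploit that Darcy's law makes the velocity a gradient. Set $\Phi:=-p/\mu$, so that $u=\nabla_{x,y}\Phi$. Incompressibility then gives $\Delta_{x,y}\Phi=0$ in $\Omega_t$. The dynamic boundary condition gives the trace $\Phi|_{\Sigma_t}=-\frac{\sigma}{\mu}\kappa(\eta)$, with the trace read through the graph parametrization $x\mapsto(x,\eta(t,x))$.

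For the forward direction, the key identification is that $\Phi$ coincides, up to an additive constant, with the energy harmonic extension $\phi_f$ of $f=-\frac{\sigma}{\mu}\kappa(\eta)$. The membership $\nabla_{x,y}\Phi=u\in L^2(\Omega_t)$ is exactly the finite-energy requirement in "the preceding class." Once this holds, the kinematic condition reads
\begin{equation*}
\partial_t\eta=\sqrt{1+|\nabla\eta|^2}\,\partial_n\Phi\big|_{\Sigma_t}=G(\eta)f=-\frac{\sigma}{\mu}G(\eta)\kappa(\eta),
\end{equation*}
where the last step uses linearity of $G(\eta)$. In the rough setting, the conormal derivative is understood weakly. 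One tests against $\nabla_{x,y}\Phi\cdot\nabla_{x,y}\psi$ for $\psi\in C_c^\infty(\overline{\Omega_t})$, and the kinematic condition is interpreted in the same weak sense. The identity then holds as an equality of distributions on $\R^2$, which is precisely the definition of $G(\eta)f$ as the weak conormal trace.

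The uniqueness step is the main obstacle, so I do it next. Suppose $\Phi-\phi_f$ is harmonic, has vanishing trace on the Lipschitz graph $\Sigma_t$, and has gradient in $L^2$. I must show it is constant, and hence zero. The approach is to flatten the domain by $(x,y)\mapsto(x,y-\eta(x))$. This turns Laplace's equation into a uniformly elliptic divergence-form equation on the lower half-space, with bounded measurable coefficients controlled by $\|\nabla\eta\|_{L^\infty}$. I then invoke Lax--Milgram in the homogeneous space $\dot H^1_0$ of the half-space, whose norm is $\|\nabla\cdot\|_{L^2}$ and whose elements have zero boundary trace. This gives uniqueness of the energy solution. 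The delicate point is that $\dot H^1_0$ of the half-space is a genuine Hilbert space modulo nothing, because the zero trace fixes the constant. I would verify this via a Hardy inequality in the vertical variable. This is exactly where finiteness of energy is used, and without it, solutions such as $y-\eta$-type functions would break uniqueness.

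For the converse, suppose $\eta$ solves \eqref{OneMuskat} and $G(\eta)$ is defined on $f=-\frac{\sigma}{\mu}\kappa(\eta)$, which requires $f\in\dot H^{1/2}(\R^2)$. I define $p:=-\mu\phi_f$ and $u:=\nabla_{x,y}\phi_f$. Then Darcy's law and incompressibility hold by construction, $p=\sigma\kappa(\eta)$ on $\Sigma_t$, and $u\in L^2(\Omega_t)$. By the definition of $G(\eta)$, the kinematic condition is exactly \eqref{OneMuskat}. The only remaining care is measurability and continuity in $t$ of these objects, which follows from the well-posedness of the elliptic problem uniformly in $t$.
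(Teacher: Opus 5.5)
Your argument is correct and is the standard reduction the paper defers to Appendix~B of \cite{MR4131404} rather than reproving. The forward direction goes through $\Phi=-p/\mu$, the dynamic condition, and gradient-level uniqueness of the energy extension; the trace theorem applied to $\Phi$ also gives $-\frac{\sigma}{\mu}\kappa(\eta)\in\dot H^{1/2}$, so $\phi_f$ is available. Your converse via $p=-\mu\phi_f$ is exactly the reconstruction used at the end of the proof of Theorem~\ref{t:MainTheorem}. Your uniqueness step is the same energy argument as at the end of the proof of Proposition~\ref{prop:finite-order-DN}, and the plain flattening $(x,y)\mapsto(x,y-\eta(x))$ with bounded measurable coefficients suffices there, since the paper needs its regularized flattening only for the tame estimates.
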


Equation \eqref{OneMuskat} is a third-order quasilinear parabolic equation.
At the linearized level, \(G(\eta)\) is replaced by \(|D|\) and
\(\kappa(\eta)\) by \(|D|^2\eta\), so that
\begin{equation*}
 \partial_t\eta=-\frac{\sigma}{\mu}|D|^3\eta+\text{nonlinear terms}.
\end{equation*}

If \(\eta(t,x)\) solves \eqref{OneMuskat}, then so does
\begin{equation*}
 \eta_\lambda(t,x)=\lambda^{-1}\eta(\lambda^3t,\lambda x), \qquad \lambda>0.
\end{equation*}

This scaling leads to the following notion of self-similarity.
\begin{definition}
A solution \(\eta(t,x)\) of \eqref{OneMuskat} is called self-similar if it can be written as
\begin{equation*}
 \eta(t,x)=t^{1/3}U\left(t^{-1/3}x\right), \qquad t>0,
\end{equation*}
for some time-independent profile \(U\) on \(\R^2\).
Whenever the following limit exists locally uniformly in \(x\), we define the initial trace by
\begin{equation*}
 \eta(0,x)=\lim_{\rho\to0^+}\rho U\left(\rho^{-1}x\right).
\end{equation*}
\end{definition}

Substitution of the self-similar ansatz into \eqref{OneMuskat} gives the profile equation
\begin{equation*}
 U-x\cdot\nabla U+\frac{3\sigma}{\mu}G(U)\kappa(U)=0.
\end{equation*}
Zero solution is obviously a trivial self-similar solution, it therefore suffices to find a non-trivial solution to the profile equation.  
Writing the profile equation as
\begin{equation} \label{AbstractEqn}
 \mathcal LU=\frac{3\sigma}{\mu}\mathscr F(U),
\end{equation}
where
\begin{equation} \label{LFDef}
 \mathcal L:=\frac{3\sigma}{\mu}|D|^3-x\cdot\nabla+1, \qquad \mathscr F(U):=|D|^3U-G(U)\kappa(U),
\end{equation}
it remains to find nontrivial solutions of \eqref{AbstractEqn}.

\subsection{Related work and the main result}
\label{s:related-work-main-result}

The Cauchy theory for the Muskat and Hele--Shaw problems has been
extensively developed in subcritical and critical Sobolev, Lipschitz, and
Besov spaces.
For the gravity-driven problem, local well-posedness and global existence under suitable stability or smallness assumptions can be found in
\cite{MR2318314,MR2753607,MR3071395,MR3415681,MR3861893,
MR4097324,MR4090462,MR4242131,MR4313450,MR4387237,
MR4402655,MR4541917,MR4986619}
and the references therein.

In the presence of surface tension, the leading-order operator is the third-order dissipative operator \(|D|^3\). 
Large-data local well-posedness in subcritical Sobolev spaces and further critical or global results were established in \cite{MR4131404,MR2162781,MR3841857,CHN,Lazar}.
Dong and Kwon~\cite{DongKwon26} proved global well-posedness for the one-phase Muskat problem with surface tension for sufficiently small initial data in \(H^s\), \(s>d/2+1\), where \(d\) denotes the interface dimension, together with decay of the Lipschitz norm.

These Cauchy theories do not by themselves produce the self-similar solutions considered here. 
Even when Lipschitz initial interfaces are allowed, the problem of constructing a solution with the exact unbounded initial trace
\[
 \eta_0(x)=\varepsilon|x|
\]
requires preserving both the self-similar scaling and the degree-one conical behavior at infinity.

For the gravity-driven Muskat problem, Garc{\'\i}a-Ju{\'a}rez,
G{\'o}mez-Serrano, Nguyen, and Pausader~\cite{GGNP22} constructed
planar self-similar solutions emanating from exact corners. Na
\cite{Na25} subsequently constructed radially symmetric
self-similar solutions for the three-dimensional problem with exact
cones as initial data and compared them with the corresponding
linearized profiles in critical homogeneous Sobolev and weighted
\(\dot W^{k,\infty}\)-type spaces. In these gravity-driven problems,
the principal operator is of first order, and the nonlinear
correction in Na's formulation starts at cubic order.
For the two-dimensional one-phase Hele--Shaw problem with surface tension, Agrawal and Patel~\cite{agrawal26} constructed planar self-similar solutions emanating from corners with opening angle close to \(\pi\). 

We treat the surface-tension problem directly in physical graph
coordinates for an interface over \(\R^2\). The main result is a
one-parameter family of self-similar solutions whose initial traces are
small multiples of the cone \(|x|\).

\begin{theorem}[Existence of self-similar solutions]
\label{t:MainTheorem}
Let \(\sigma,\mu>0\) and \(3/2<r<2\).
Then there exists \(\varepsilon_0>0\) such that, for every \(\varepsilon\in\R\) with \(|\varepsilon|\leq\varepsilon_0\), the one-phase Muskat equation with surface tension \eqref{OneMuskat} admits a self-similar solution in the energy graph formulation,
\begin{equation*}
 \eta_\varepsilon(t,x)=t^{1/3}U_\varepsilon(t^{-1/3}x), \qquad t>0,
\end{equation*}
whose profile is of the form
\begin{equation*}
 U_\varepsilon=\varepsilon u_*+v_\varepsilon, \qquad u_*=e^{-\frac{\sigma}{\mu}|D|^3}|x|.
\end{equation*}
Here,
\begin{equation*}
  v_\varepsilon\in X_r:=\dot H^r(\R^2)\cap\dot H^{r+3}(\R^2), \qquad \|v_\varepsilon\|_{X_r}\leq C_{r,\sigma/\mu}\varepsilon^2.
\end{equation*}
The correction \(v_\varepsilon\) is the normalized representative selected by the canonical inverse \(\mathcal L^{-1}\) constructed in Proposition~\ref{prop:linear-inverse-Sobolev}.
With this normalization, the profile equation holds in \(\mathcal S'(\R^2)\), and
\begin{equation*}
 \nabla U_\varepsilon,D^2U_\varepsilon\in L^\infty(\R^2).
\end{equation*}
Define the quadratic correction 
\begin{equation*}
 v^{(2)}:=\frac{3\sigma}{\mu}\mathcal L^{-1}\mathcal B(u_*,u_*), \qquad  \mathcal B(f,g) := \frac12 \left( D_\eta G(0)[f]\Delta g + D_\eta G(0)[g]\Delta f \right).
\end{equation*}
Then \(v^{(2)}\in X_r\) and
\begin{equation*}
 \left\|v_\varepsilon-\varepsilon^2v^{(2)}\right\|_{X_r}\leq C_{r,\sigma/\mu}|\varepsilon|^3.
\end{equation*}
Equivalently,
\[
 U_\varepsilon=\varepsilon u_*+\varepsilon^2v^{(2)}+O_{X_r}(|\varepsilon|^3) \qquad\text{as }\varepsilon\to0.
\]
Its initial trace is attained locally uniformly:
\[
 \eta_\varepsilon(t,x)\longrightarrow\varepsilon|x|
 \qquad\text{locally uniformly in }x\text{ as }t\downarrow0.
\]
Thus \(\eta_\varepsilon(0,x)=\varepsilon|x|\).
In particular, the solution is nontrivial whenever \(\varepsilon\neq0\).
\end{theorem}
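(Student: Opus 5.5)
The plan is to write $U=\varepsilon u_*+v$ and turn the profile equation \eqref{AbstractEqn} into a fixed-point problem for $v$ in $X_r$. First check that $u_*$ solves the linear equation $\mathcal L u_*=0$. On the Fourier side, $|x|$ is homogeneous of degree one, so $\widehat{|x|}$ is homogeneous of degree $-3$ in $\R^2$. The identity $(-x\cdot\nabla+1)e^{-\frac{\sigma}{\mu}|D|^3}|x| = -\frac{3\sigma}{\mu}|D|^3 e^{-\frac{\sigma}{\mu}|D|^3}|x|$ then follows from Euler's relation. Indeed, in frequency $-x\cdot\nabla+1$ becomes $\xi\cdot\nabla_\xi+3$, and $\xi\cdot\nabla_\xi e^{-c|\xi|^3}=-3c|\xi|^3e^{-c|\xi|^3}$, with $c=\sigma/\mu$. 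Thus $\mathcal L u_*=0$ in $\mathcal S'$. The equation for $v$ becomes
\begin{equation*}
 v=\Phi(v):=\frac{3\sigma}{\mu}\mathcal L^{-1}\mathscr F(\varepsilon u_*+v),
\end{equation*}
where $\mathcal L^{-1}$ is the canonical inverse of Proposition~\ref{prop:linear-inverse-Sobolev}. I take that proposition to give $\mathcal L^{-1}:Y_r\to X_r$ bounded for a suitable target space $Y_r$, for instance $\dot H^{r}\cap\dot H^{r}$-type data with some low-frequency gain (the raywise estimate). The key bookkeeping is that $u_*$ itself is not in $X_r$. Its gradient and Hessian are bounded, $\nabla u_*$ is asymptotically $x/|x|$, and $\hat u_*$ is smooth away from zero and rapidly decaying.

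Next expand $\mathscr F(U)=|D|^3U-G(U)\kappa(U)$ around $\eta=0$. Since $G(0)=|D|$ and $\kappa$ is linear to first order, the quadratic part is $-D_\eta G(0)[U]\Delta U$ plus $|D|$ applied to the cubic part of $\kappa$. The latter is cubic, so
\begin{equation*}
 \mathscr F(U)=\mathcal B(U,U)+\mathscr R(U),
\end{equation*}
with $\mathscr R$ at least cubic. The sign conventions are absorbed in $\mathcal B$ as defined in the statement. I then need three estimates.

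\textbf{Step one} is $\mathcal B(u_*,u_*)\in Y_r$. The difficulty is that $u_*$ only has $\nabla u_*\in L^\infty$ and is not decaying. However, $\Delta u_*$ is smooth and decays like $|x|^{-1}$, and $D_\eta G(0)[f]g=-|D|(f|D|g)-\nabla\cdot(f\nabla g)$. The high--high-to-low cancellation in this structure kills the dangerous low-frequency output. It shows that $\widehat{\mathcal B(u_*,u_*)}$ vanishes to enough order at $\xi=0$ to lie in $\dot H^{r}$ for $r>3/2$ after applying $\mathcal L^{-1}$. The restriction $r<2$ comes from the homogeneity of the degree-one tail.

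\textbf{Step two} is the bilinear bound $\|\mathcal B(u_*,w)\|_{Y_r}+\|\mathcal B(w,w')\|_{Y_r}\lesssim\|w\|_{X_r}(1+\|w'\|_{X_r})$.

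\textbf{Step three} is the tame bound $\|\mathscr R(\varepsilon u_*+v)-\mathscr R(\varepsilon u_*+v')\|_{Y_r}\lesssim(\varepsilon+\|v\|+\|v'\|)^2\|v-v'\|_{X_r}$. This uses finite-order tame estimates for $G(\eta)$ on asymptotically conical graphs with small Lipschitz constant $\|\nabla\eta\|_{L^\infty}\lesssim\varepsilon$. I expect this to be the main obstacle. $G(\eta)$ must be controlled for $\eta$ that is neither in $L^2$ nor in any homogeneous Sobolev space at low frequency. I would handle it by flattening via $(x,z)\mapsto(x,z+\eta)$, solving the elliptic problem by a Neumann series in $\nabla\eta$ in $L^2$-based spaces for $\nabla\phi$, and putting $\nabla U,D^2U$ in $L^\infty$ and higher derivatives in $L^2$.

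With these bounds, $\Phi$ maps the ball $\{\|v\|_{X_r}\le C\varepsilon^2\}$ to itself and is a contraction there for $|\varepsilon|\le\varepsilon_0$. This gives $v_\varepsilon$. Moreover,
\begin{equation*}
 v_\varepsilon-\varepsilon^2v^{(2)}=\tfrac{3\sigma}{\mu}\mathcal L^{-1}\bigl(2\varepsilon\mathcal B(u_*,v_\varepsilon)+\mathcal B(v_\varepsilon,v_\varepsilon)+\mathscr R(U_\varepsilon)\bigr)=O_{X_r}(\varepsilon^3).
\end{equation*}
The bounds $\nabla U_\varepsilon,D^2U_\varepsilon\in L^\infty$ follow from $\nabla v,D^2v\in\dot H^{r-1}\cap\dot H^{r+2}$ and $\dot H^{r-1}\cap\dot H^{r+2}\subset L^\infty$, which holds in two dimensions since $r-1<1<r+2$.

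For the initial trace, $\rho\,u_*(x/\rho)=e^{-\frac{\sigma}{\mu}\rho^3|D|^3}|x|\to|x|$ locally uniformly, since $|x|$ is Lipschitz and the kernel is an approximate identity with a finite first moment. For the correction, $\rho\,v(x/\rho)$ is bounded by $\rho\,|v(0)|+\|\nabla v\|_{L^\infty}|x|$ only crudely. Instead I use the normalization from $\mathcal L^{-1}$ and scaling: $\|\nabla(\rho v(\cdot/\rho))\|_{\dot H^{r-1}}=\rho^{r-1}\|\nabla v\|_{\dot H^{r-1}}\to0$. Combined with the uniform $\dot H^{r+2}$ control, interpolation gives $\nabla(\rho v(\cdot/\rho))\to0$ locally uniformly. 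The normalized representative then fixes the additive constant, so $\rho\,v(x/\rho)\to0$ locally uniformly. Nontriviality follows because the trace is $\varepsilon|x|\neq0$.
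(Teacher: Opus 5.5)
Your architecture matches the paper's: $U=\varepsilon u_*+v$ with $\mathcal Lu_*=0$, a contraction for $v=\frac{3\sigma}{\mu}\mathcal L^{-1}\mathscr F(\varepsilon u_*+v)$ on a ball of radius $O(\varepsilon^2)$ in $X_r$, and the same identity for $v_\varepsilon-\varepsilon^2v^{(2)}$. However, two steps fail as written, and the analytic core is only sketched.

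\textbf{Initial trace of the correction.} Set $w_\rho:=\rho v(\cdot/\rho)$. Then $\nabla w_\rho=(\nabla v)(\cdot/\rho)$, so $\nabla w_\rho(0)=\nabla v(0)$ for every $\rho$, and $\|\nabla w_\rho\|_{L^\infty}$ is scale invariant. The gradient therefore does not tend to zero locally uniformly. Interpolation cannot produce decay either. In $\R^2$ one has $\|\nabla w_\rho\|_{\dot H^{r-1}}=\rho^{2-r}\|\nabla v\|_{\dot H^{r-1}}$ (not $\rho^{r-1}$). Meanwhile $\|\nabla w_\rho\|_{\dot H^{r+2}}=\rho^{-1-r}\|\nabla v\|_{\dot H^{r+2}}$ blows up, and the two powers cancel exactly at the $L^\infty$ level. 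What works is the homogeneous Morrey estimate for the normalized representative, $|v(y)-v(0)|\le C_r|y|^{r-1}\|v\|_{\dot H^r}$. It gives $|w_\rho(x)|\le\rho|v(0)|+C_r\rho^{2-r}|x|^{r-1}\|v\|_{\dot H^r}\to0$, and this is where $r<2$ is used.

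\textbf{Quadratic term.} Your Step one mechanism is wrong: $\widehat{\mathcal B(u_*,u_*)}$ does not vanish at $\xi=0$. In $\mathcal B_0(f,g)=|D|(f|D|^3g)-\operatorname{div}(f\nabla\Delta g)$, the first term has zero mean. The second has mean $2\pi$, namely minus the flux through large circles of $u_*\nabla\Delta u_*\approx-x/|x|^2$. So a target space with built-in low-frequency gain would not contain $\mathcal B(u_*,u_*)$. No gain is needed: Proposition~\ref{prop:linear-inverse-Sobolev} inverts $\mathcal L$ on $\dot H^r$ itself (the raywise Hardy inequality needs $r<2$), and a bounded Fourier transform near the origin is harmless in $\dot H^r$.

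What Steps one and two actually require, and what you do not supply, is the second-order vanishing of the symbol in the low-frequency shape variable:
\[
0\le|\alpha+\beta||\beta|-(\alpha+\beta)\cdot\beta\le\tfrac12|\alpha|^2,
\]
that is, $b(0,\beta)=\nabla_\alpha b(0,\beta)=0$. In the low--high interaction where $u_*$ is the low-frequency factor, this lets $u_*$ enter only through $\|D^2u_*\|_{L^\infty}$. Without it one would need $L^\infty$ control of the low-frequency part of $u_*$ itself, which grows like $|x|$. This cancellation is what yields \eqref{eq:B-estimate-revised}, hence $\mathcal B(u_*,u_*)\in\dot H^r$ and your Step two.

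\textbf{Step three} remains a plan. The paper carries it out with three ingredients:
\begin{itemize}
\item the regularized flattening $z+\chi_{-z}*\eta$, whose coefficients involve only $\nabla\eta$ and decay like $(1+2^j|z|)^{-N}$ at frequency $2^j$ (with $z+\eta(x)$ the coefficients are $z$-independent, never lie in the $L^2_z$ components of $\mathcal E_r$, and you would need different norms);
\item mixed half-space norms that are unweighted at low horizontal frequencies;
\item the anchors $\dot H^{1/2}\cap\dot H^1$ on the boundary datum, which $\Delta u_*$ satisfies because $u_*\in\dot H^{5/2}$.
\end{itemize}

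\textbf{Minor slips.} The quadratic part of $\mathscr F$ is $+D_\eta G(0)[U]\Delta U$. Also $D^2v\in\dot H^{r-2}\cap\dot H^{r+1}$, which still embeds in $L^\infty$.
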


\begin{remark}[Meaning of the quadratic correction]
The leading term \(\varepsilon u_*\) is the regularized linear cone, whereas \(v^{(2)}\) describes the first nonlinear deformation of its self-similar core.
Since \(v^{(2)}\in X_r\) and \(r<2\), the homogeneous
Sobolev--Morrey inequality in dimension two gives
\[
 |v^{(2)}(x)-v^{(2)}(0)|
 \leq
 C_r|x|^{r-1}\|v^{(2)}\|_{\dot H^r}.
\]
Thus, for every \(R>0\),
\begin{align*}
\sup_{|x|\leq R}
 t^{1/3}
 \left|
  v^{(2)}(t^{-1/3}x)
 \right|\leq
 t^{1/3}|v^{(2)}(0)|
 +
 C_rR^{r-1}
 t^{(2-r)/3}
 \|v^{(2)}\|_{\dot H^r}.
\end{align*}
Since \(r<2\), the right-hand side tends to zero as \(t\downarrow0\), and
\[
 t^{1/3}v^{(2)}(t^{-1/3}x)\longrightarrow0 \qquad\text{locally uniformly as }t\downarrow0.
\]
Thus the quadratic correction modifies the rounded core for \(t>0\) but does not change the conical initial trace \(\varepsilon|x|\).
\end{remark}

\begin{remark}[The one-dimensional case]
\label{rem:one-dimensional-case}
When \(d=1\), the problem reduces to the planar one-phase Hele--Shaw problem with surface tension. 
Agrawal and Patel~\cite{agrawal26} constructed self-similar solutions emanating from corners with opening angle close to \(\pi\).
The initial graph \(y=\varepsilon|x|\) is a wedge of this type. 
With the fluid lying below the graph, its opening angle is
\[
 \pi+2\arctan\varepsilon. 
\]
Thus the existence of planar self-similar solutions with this type
of initial wedge is already covered by Agrawal and
Patel~\cite{agrawal26}, although the graph-coordinate approach
described here would provide a different proof.

Our argument is expected to adapt to \(d=1\).  A natural choice of
correction space would be
\[
 1<r<\frac32,\qquad
 X_r^{(1)} := \dot H^r(\R)\cap\dot H^{r+3}(\R).
\]
The quadratic cancellation becomes particularly transparent: writing
\(\zeta=\alpha+\beta\),
\[
 |\zeta||\beta|-\zeta\beta = 2|\zeta||\beta|
 \mathbf 1_{\{\zeta\beta<0\}}.
\]
Hence interactions for which \(\zeta\beta\geq0\) vanish identically, while the high--high-to-low interactions retain the required output-frequency gain. A rigorous one-dimensional extension would additionally
require the dimension-adjusted linear Hardy estimate and mixed
Dirichlet--Neumann estimates.  We do not pursue these details here.
\end{remark}

\begin{remark}[Higher dimensions]
\label{rem:higher-dim}
The algebraic argument used in this paper is independent of the dimension.
For a graph over \(\R^d\), the distinguished linear profile is
\[
 u_{*,d} := e^{-\frac{\sigma}{\mu}|D|^3}|x|, \qquad
 \widehat{u_{*,d}}(\xi)
 = c_d e^{-\frac{\sigma}{\mu}|\xi|^3} |\xi|^{-d-1}
\]
in \(\mathcal S'(\R^d)/\mathcal P\). 
The same raywise computation
shows that \(u_{*,d}\) belongs to the kernel of the corresponding linear similarity operator.
Moreover, the symbolic identities proved in Lemma~\ref{lem:quadratic-cancellation-revised},
\[
0 \leq |\alpha+\beta||\beta| - (\alpha+\beta)\cdot\beta \leq \frac12|\alpha|^2, \qquad
 |b(\alpha,\beta)| \lesssim |\alpha+\beta||\beta|^3
\]
show that the favorable quadratic cancellation persists in every dimension.

What depends on the dimension is the choice of the auxiliary
homogeneous Sobolev norms used to control the low frequencies.
Indeed,
\[
 \Delta u_{*,d}\in\dot H^{s_0}(\R^d)
 \quad\Longleftrightarrow\quad
 s_0>\frac d2-1.
\]
Therefore, the two-dimensional low-frequency norms used below for
\(\Delta h\) must be modified when \(d\geq3\).  A natural analogue of the two-dimensional choice is
\[
 s_0=\frac{d-1}{2},
\]
together with the two low-frequency norms \(\dot H^{s_0}\) and \(\dot H^{s_0+1/2}\).
On the other hand, the linear Hardy estimate and the homogeneous Morrey bound suggest the upper restriction
\[
 r<\frac d2+1,
\]
whereas the mixed Sobolev embeddings require \(r>d/2\).
For instance, one may work in the more restrictive range
\[
 \frac{d+1}{2}<r<\frac d2+1,
\]
which is the direct analogue of the range used in dimension two.
A complete higher-dimensional argument would also require corresponding modifications of the mixed half-space estimates.
\end{remark}

\begin{remark}[The initial profile]
\label{rem:initial-profile}
The distinguished profile \(u_*\) is the time-one regularization of the homogeneous cone \(|x|\) under the semigroup generated by \(-\frac{\sigma}{\mu}|D|^3\).
More precisely,
\[
 t^{1/3}u_*(t^{-1/3}x) = e^{-\frac{\sigma}{\mu}t|D|^3}|x|.
\]
The convergence back to the cone can be checked directly despite the
unboundedness of \(|x|\).
With the normalization used throughout the paper,
\[
 \widehat{u_*-|x|}(\xi) = c_2 \bigl( e^{-\frac{\sigma}{\mu}|\xi|^3}-1 \bigr)|\xi|^{-3}.
\]
The right-hand side belongs to \(L^1(\R^2)\): it is bounded near \(\xi = \mathbf{0}\) and is \(O(|\xi|^{-3})\) as \(|\xi|\to\infty\).
Consequently,
\[
 u_*(x) = |x|+\psi_{\sigma/\mu}(x), \qquad \psi_{\sigma/\mu}\in C_0(\R^2)\cap L^\infty(\R^2).
\]
It follows that
\[
 t^{1/3}u_*(t^{-1/3}x) = |x|+t^{1/3}\psi_{\sigma/\mu}(t^{-1/3}x) \longrightarrow |x|
\]
locally uniformly in \(x\) as \(t\downarrow0\).

For the normalized representative of \(v_\varepsilon\), the homogeneous
Morrey estimate and the condition \(r<2\) similarly imply
\[
 t^{1/3}v_\varepsilon(t^{-1/3}x) \longrightarrow0 \qquad \text{as }t\downarrow0.
\]
Consequently,
\[
 \eta_\varepsilon(t,x) = t^{1/3} U_\varepsilon(t^{-1/3}x) \longrightarrow \varepsilon|x|
\]
locally uniformly in \(x\), giving the initial trace stated in Theorem~\ref{t:MainTheorem}.

For \(\varepsilon\neq0\), the initial interface has a conical singularity at the origin.
For every \(t>0\), the self-similar profile is \(C^1\), so the conical point is smoothed instantaneously.
The singular cone reappears only as \(t\downarrow0\), while the transition region near its vertex has spatial scale \(O(t^{1/3})\).
\end{remark}

\subsection{Main difficulties and strategy of the proof}
\label{s:main-difficulties-strategy}

Three features place the problem outside the direct scope of the standard small-data theory: 
the initial interface is unbounded and asymptotically conical, the capillary operator is of third order, and the profile equation contains the non-translation-invariant dilation term \(x\cdot\nabla\).

The homogeneous cone \(|x|\) does not belong to the correction space, and neither the initial interface nor the self-similar profile is bounded. 
We therefore separate the conical component by writing
\[
 U=\varepsilon u_*+v,
 \qquad
 u_*:=e^{-\frac{\sigma}{\mu}|D|^3}|x|.
\]
The profile \(u_*\) is regular near the vertex, has the same degree-one behavior at infinity as \(|x|\), and satisfies
\[
 \mathcal Lu_*=0, \qquad \mathcal L = \frac{3\sigma}{\mu}|D|^3-x\cdot\nabla+1.
\]
The correction is sought in
\[
 X_r := \dot H^r(\R^2)\cap\dot H^{r+3}(\R^2), \qquad \frac32<r<2.
\]
The condition \(r>3/2\) is required by the mixed Sobolev embedding used in Section~\ref{s:finite-order-DN}.  
Two distinct regularity levels enter the Dirichlet--Neumann estimates.
The low-frequency energy of the boundary datum \(\Delta h\) is controlled by \(\|h\|_{\dot H^{5/2}}\), while the coefficients of the regularized flattening, which involve one derivative of \(h\), are controlled by \(\|h\|_{\dot H^{r+1}}\);
see Lemma~\ref{lem:regularized-extension-two-level}.  
The stronger norm \(\|h\|_{\dot H^{r+3}}\) is instead used to control the high-frequency boundary norm \(\|\Delta h\|_{\dot H^{r+1}}\).
The upper restriction \(r<2\) is used in the weighted Hardy estimate \eqref{eq:weighted-Hardy-linear} for the linear inverse and in the homogeneous Morrey estimate \eqref{eq:normalized-Morrey-linear}.
Indeed, for a normalized \(v\in X_r\),
\[
 t^{1/3}
 \bigl|v(t^{-1/3}x)-v(0)\bigr|
 \lesssim_r
 t^{(2-r)/3}|x|^{r-1}\|v\|_{\dot H^r},
\]
which tends to zero locally uniformly as \(t\downarrow0\). This is
used in Remark~\ref{rem:initial-profile} to show that the correction
does not alter the conical initial trace.

Although \(\mathcal L\) is not a Fourier multiplier, its Fourier transform reduces along each ray \(\xi=\rho\omega\) to a first-order ordinary differential equation. 
A weighted Hardy estimate applied to the resulting raywise formula gives
\[
 \|\mathcal L^{-1}F\|_{\dot H^r} +
 \|\mathcal L^{-1}F\|_{\dot H^{r+3}}
 \lesssim_{r,\sigma/\mu}  \|F\|_{\dot H^r}.
\]
Thus \(\mathcal L^{-1}\) gains three derivatives at high frequencies
without destroying the conical low-frequency structure.

The principal nonlinear issue is to control \(G(\eta)\) when \(\eta\) is
unbounded but has small slope.
We flatten the graph by the regularized change of variables
\[
 (x,z)\longmapsto \bigl(x,z+H_\eta(x,z)\bigr),
 \qquad H_\eta(x,z):=\chi_{-z}*\eta(x).
\]
The coefficients of the flattened equation depend on derivatives of \(\eta\), rather than on \(\eta\) itself. 
To handle the low frequencies, we introduce mixed half-space norms in which the energy components are unweighted, while high Sobolev weights are imposed only on positive horizontal frequencies. 
For the graph and boundary-data norms \(\mathcal A_r\) and \(\mathcal D_r\) introduced below, there exists \(\delta_r>0\) such that, whenever \(\mathcal A_r(\eta)\leq\delta_r\),
\[
 \left\| D_\eta^kG(\eta)[h_1,\ldots,h_k]f \right\|_{\dot H^r} \lesssim_{r,k} \mathcal D_r(f)
 \prod_{\nu=1}^k\mathcal A_r(h_\nu),
 \qquad 0\leq k\leq3,
\]
uniformly on the small graph ball.

The curvature remainder is cubic:
\[
 \kappa(h) = -\Delta h+\mathcal Q_\kappa(h),
 \qquad \mathcal Q_\kappa(h) = O\bigl((\nabla h)^2D^2h\bigr).
\]
The quadratic part of the nonlinearity is therefore generated by the first shape derivative of \(G\) and is represented by \(\mathcal B(h,h)\). Its symbol satisfies both a global two-derivative cancellation and an additional output-frequency gain in the high--high-to-low region. 
This extra gain makes the homogeneous low-frequency summation convergent.

Combining the quadratic estimate with the finite-order Dirichlet--Neumann calculus yields
\[
 |D|^3h-G(h)\kappa(h) = \mathcal B(h,h)+\mathscr R_{\geq3}(h).
\]
For the conical perturbations
\[
 h=\varepsilon u_*+v, \qquad v\in X_r,
\]
there exists \(\delta_{r,\sigma/\mu}>0\) such that, whenever
\[
 |\varepsilon|+\|v\|_{X_r}\leq\delta_{r,\sigma/\mu},
\]
the remainder satisfies
\begin{equation}
\label{eq:intro-cubic-remainder}
 \|\mathscr R_{\geq3}(\varepsilon u_*+v)\|_{\dot H^r}
 \lesssim_{r,\sigma/\mu} \bigl( |\varepsilon|+\|v\|_{X_r}
 \bigr)^3.
\end{equation}
Moreover, if
\[
 h_i=\varepsilon u_*+v_i,
 \qquad i=1,2,
\]
and \(|\varepsilon|+\|v_i\|_{X_r}\leq\delta_{r,\sigma/\mu}\) for \(i=1,2\),
then
\begin{align}
\label{eq:intro-cubic-remainder-difference}
\|\mathscr R_{\geq3}(h_1)
       -\mathscr R_{\geq3}(h_2)\|_{\dot H^r}\lesssim_{r,\sigma/\mu}
 \bigl(
  |\varepsilon|
  +\|v_1\|_{X_r}
  +\|v_2\|_{X_r}
 \bigr)^2
 \|v_1-v_2\|_{X_r}.
\end{align}

With these estimates in hand, we solve the correction equation by applying
\(\mathcal L^{-1}\) to the quadratic term and the cubic remainder.
The preceding estimates show that the resulting map is a contraction
on a ball of radius \(O(\varepsilon^2)\) in \(X_r\). This gives
\[
 U_\varepsilon
 =
 \varepsilon u_*+v_\varepsilon,
 \qquad
 \|v_\varepsilon\|_{X_r}
 \lesssim_{r,\sigma/\mu}
 \varepsilon^2.
\]
Retaining the leading quadratic contribution in the fixed-point equation
also gives
\[
 v_\varepsilon
 =
 \varepsilon^2v^{(2)}
 +
 O_{X_r}(|\varepsilon|^3),
\]
which is the second-order expansion stated in
Theorem~\ref{t:MainTheorem}.

The remainder of the paper is organized as follows.
In Section~\ref{s:linear-similarity-operator}, we analyze the linear
similarity operator and construct the regularized conical profile.
Section~\ref{s:cone-capillary-expansion-revised} introduces the
cone-correction space and identifies the favorable quadratic cancellation in
the capillary nonlinearity.
In Section~\ref{s:finite-order-DN}, we establish the finite-order
cone-adapted Dirichlet--Neumann estimates.
Finally, Section~\ref{s:capillary-fixed-point} controls the cubic
remainder, closes the fixed-point argument, and recovers the
corresponding self-similar Muskat solution.

\textbf{Acknowledgments.}
Jiaqi Yang is supported by National Natural Science Foundation of China under Grant: 12471225. 
%Authors used ChatGPT 5.6 for exploratory discussion and editorial feedback during the preparation of this manuscript. 
\section{Analysis of the linear operator}
\label{s:linear-similarity-operator}

We begin with the linear operator \(\mathcal L\) defined in
\eqref{LFDef}. The purpose of this section is to solve
\begin{equation} \label{LvFEqn}
  \mathcal L v = F.
\end{equation}

The dilation term requires some care at low frequency, so we first specify
the realization of the homogeneous Sobolev spaces used below.
For \(1<r<2\), a representative \(F\in\dot H^r(\R^2)\) is called normalized if its prescribed value \(F(0)\) and its homogeneous Littlewood--Paley blocks satisfy
\begin{equation}
\label{eq:normalized-representative-linear}
 F(x)-F(0)=\sum_{j\in\mathbb Z}\bigl(\dot\Delta_jF(x)-\dot\Delta_jF(0)\bigr).
\end{equation}
The series converges locally uniformly and obeys the homogeneous Morrey estimate
\begin{equation}
\label{eq:normalized-Morrey-linear}
 |F(x)-F(0)|\leq C_r|x|^{r-1}\|F\|_{\dot H^r}.
\end{equation}
Thus this convention excludes every nonconstant polynomial while retaining the prescribed additive constant.
All applications of \(\mathcal L^{-1}\) use this normalized realization.

The following proposition gives both the inverse formula and the estimate
needed in the nonlinear argument.

\begin{proposition}[Linear inverse in the cone-correction space]
\label{prop:linear-inverse-Sobolev}
Let \(\sigma,\mu>0\) and \(1<r<2\).
For every normalized representative \(F\in\dot H^r(\R^2)\), there exists a unique normalized solution
\[
 v\in\dot H^r(\R^2)\cap\dot H^{r+3}(\R^2)
\]
of \eqref{LvFEqn} in \(\mathcal S'(\R^2)\).
Moreover,
\begin{equation}
\label{eq:linear-inverse-Sobolev}
 \|v\|_{\dot H^r} + \|v\|_{\dot H^{r+3}} \leq C_{r,\sigma/\mu}\|F\|_{\dot H^r}.
\end{equation}
For almost every \(\rho>0\) and \(\omega\in\mathbb S^1\), its Fourier transform away from the origin is given by
\begin{equation}
\label{eq:linear-inverse-ray-formula}
 \widehat v(\rho\omega) = \rho^{-3}e^{-\frac{\sigma}{\mu}\rho^3} \int_0^\rho s^2e^{\frac{\sigma}{\mu}s^3}\widehat F(s\omega)\,ds.
\end{equation}
Equivalently, for every normalized \(F\in\dot H^r(\R^2)\),
\begin{equation}
\label{eq:linear-inverse-physical-formula}
 \mathcal L^{-1}F(x) = \int_0^1 e^{-\frac{\sigma}{\mu}(1-t^3)|D|^3}F(x/t)\,dt.
\end{equation}
The integral in \eqref{eq:linear-inverse-physical-formula} is understood in \(\mathcal S'(\R^2)\); the sublinear growth in \eqref{eq:normalized-Morrey-linear} makes it convergent at \(t=0\).
\end{proposition}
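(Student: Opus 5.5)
Set $a=\sigma/\mu$. We take the Fourier transform of \eqref{LvFEqn}. With $\widehat{x\cdot\nabla v}=-2\widehat v-\xi\cdot\nabla_\xi\widehat v$ in $\R^2$, the equation becomes
\[
 3a|\xi|^3\widehat v+\xi\cdot\nabla_\xi\widehat v+3\widehat v=\widehat F .
\]
Along a ray $\xi=\rho\omega$ this is the first-order ODE $\rho\partial_\rho\widehat v+(3+3a\rho^3)\widehat v=\widehat F$. Its integrating factor is $\rho^3e^{a\rho^3}$, so the equation reads $\partial_\rho\bigl(\rho^3e^{a\rho^3}\widehat v\bigr)=\rho^2e^{a\rho^3}\widehat F$. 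We integrate from $0$ and obtain \eqref{eq:linear-inverse-ray-formula}. The homogeneous solutions are $c(\omega)\rho^{-3}e^{-a\rho^3}$. These are excluded from $\dot H^r$ with $r<2$: near the origin $\int|\xi|^{2r-6}\,d\xi$ diverges in dimension two whenever $r\le2$. This gives uniqueness modulo distributions supported at $\xi=0$, that is, polynomials. The normalization \eqref{eq:normalized-representative-linear} then removes nonconstant polynomials. The constant is fixed because $\mathcal L c=c$: we evaluate $(\mathcal Lv)(0)$ at the origin, where the term $x\cdot\nabla v$ vanishes, so the constant is determined by $F(0)$ together with $|D|^3v(0)$. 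Checking that this pairing is consistent, i.e. that $|D|^3v$ is continuous at $0$, which follows from $v\in\dot H^{r+3}$ with $r+3>2$, is a minor bookkeeping point.

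For the estimates we work raywise in polar coordinates. Writing $g(s)=|\widehat F(s\omega)|$, we have $\|F\|_{\dot H^r}^2=\int_{\mathbb S^1}\int_0^\infty s^{2r+1}g(s)^2\,ds\,d\omega$.

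\textbf{Low frequencies, $\rho\le1$.} Here $e^{\pm a s^3}\simeq1$, so $|\widehat v(\rho\omega)|\lesssim\rho^{-3}\int_0^\rho s^2g\,ds$. The weighted Hardy inequality $\int_0^\infty\rho^{2r+1}\bigl(\rho^{-3}\int_0^\rho s^2g\bigr)^2d\rho\lesssim\int s^{2r+1}g^2$ holds exactly when $2r+1-6<-1$, i.e. $r<2$. This is \eqref{eq:weighted-Hardy-linear}, and it gives the $\dot H^r$ bound. The $\dot H^{r+3}$ norm over $\rho\le1$ is dominated by the $\dot H^r$ norm.

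\textbf{High frequencies, $\rho\ge1$.} We split the integral at $s=\rho/2$. The part with $s\le\rho/2$ carries the factor $e^{-a(\rho^3-s^3)}\le e^{-a\rho^3/2}$, which is super-polynomially small; Cauchy--Schwarz against $s^{2r+1}g^2$ then bounds it in any $\dot H^m$. On $s\in[\rho/2,\rho]$ we have $\rho^3-s^3\simeq\rho^2(\rho-s)$. This yields a kernel $\rho^{-1}e^{-c\rho^2(\rho-s)}$, whose $s$-integral is $\lesssim\rho^{-3}$. Multiplying by $\rho^{-3}\cdot\rho^2=\rho^{-1}$ gives $|\widehat v(\rho\omega)|\lesssim\rho^{-3}\,M g(\rho)$, where $M$ is a local averaging operator on $[\rho/2,\rho]$ at scale $\rho^{-2}$. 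Schur's test then gives $\rho^{r+3}|\widehat v|$ bounded in $L^2(\rho\,d\rho)$ by $\|F\|_{\dot H^r}$. This is the three-derivative gain, and it completes \eqref{eq:linear-inverse-Sobolev}. Existence follows by defining $v$ through the ray formula: the estimates show $v\in\dot H^r\cap\dot H^{r+3}$, and the ODE shows that \eqref{LvFEqn} holds away from $\xi=0$. A normalized realization then holds in $\mathcal S'$, since any discrepancy is supported at the origin and is controlled by the normalization.

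\textbf{Physical formula.} We change variables $s=t\rho$ in \eqref{eq:linear-inverse-ray-formula}, obtaining $\widehat v(\rho\omega)=\int_0^1t^2e^{-a\rho^3(1-t^3)}\widehat F(t\rho\omega)\,dt$. Since $t^2\widehat F(t\xi)$ is the Fourier transform of $F(\cdot/t)$ in two dimensions, this is \eqref{eq:linear-inverse-physical-formula}. Convergence at $t=0$ in $\mathcal S'$ uses the Morrey bound \eqref{eq:normalized-Morrey-linear}: $F(x/t)-F(0)=O(t^{1-r}|x|^{r-1})$, which is integrable in $t$ because $r<2$. The constant term $F(0)$ reproduces the correct constant.

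The main obstacle I expect is the high-frequency Schur estimate together with the careful treatment of the origin in $\mathcal S'$ and of the constant. The Hardy range $r<2$ is what makes the low-frequency part work.
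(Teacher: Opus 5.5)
The ray formula, the weighted Hardy bound at low frequency, and the Schur estimate for $\rho\geq1$ (split at $s=\rho/2$, kernel $\rho^2e^{-c\rho^2(\rho-s)}$) are the same as in the paper. So are the change of variables $s=t\rho$ and the uniqueness argument: homogeneous solutions $A(\omega)\rho^{-3}e^{-a\rho^3}$ are excluded from $\dot H^r$ because $r<2$, and then $\mathcal Lc=c$. In the high-frequency step you count the factor $\rho^{-3}s^2\simeq\rho^{-1}$ twice, but the bound you end with, $|\widehat v(\rho\omega)|\lesssim\rho^{-3}Mg(\rho)$, is correct.

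The gap is existence in $\mathcal S'$. Defining $v$ by the ray formula and taking a normalized representative only yields $\mathcal Lv-F=P$ with $P$ a polynomial, because the equation is checked only on $\R^2\setminus\{0\}$. The normalization does not dispose of $P$. It constrains the polynomial part of $v$, not of $\mathcal Lv-F$. Moreover, no change of $v$ by a polynomial can cancel a linear part of $P$: $|D|^3$ annihilates affine functions and $(1-x\cdot\nabla)(b\cdot x)=0$, so $\mathcal L(b\cdot x)=0$. This is exactly the kernel element that makes the normalization necessary for uniqueness. You therefore have to prove that $P$ is constant, after which $\mathcal Lc=c$ lets you absorb it.

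One direct way is a growth argument for normalized $v$ and $F$:
- By \eqref{eq:normalized-Morrey-linear}, $v$ and $F$ grow sublinearly.
- $|D|^3v$ is bounded because $|\xi|^3\widehat v\in L^1$. Near the origin, Cauchy--Schwarz in the ray formula gives $|\widehat v(\rho\omega)|\lesssim\rho^{-1-r}G(\omega)$ with $G\in L^2(\mathbb S^1)$; at high frequency, $r>1$ suffices.
- $\nabla v\in C_0$, as the uniform limit of the blocks $\nabla\dot\Delta_jv$.

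Hence $P(x)=o(|x|)$, so $P$ is constant.

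The paper instead verifies the equation directly on the physical formula, integrating over $0<t<1$ the identity
\[
 (\xi\cdot\nabla_\xi+3+3a|\xi|^3)\bigl[t^2e^{-a(1-t^3)|\xi|^3}\widehat F(t\xi)\bigr]
 =\partial_t\bigl[t^3e^{-a(1-t^3)|\xi|^3}\widehat F(t\xi)\bigr].
\]
The $t=0$ boundary term is the transform of $tF(\cdot/t)$, which tends to zero by Morrey. This one computation gives $\mathcal L\mathcal L^{-1}F=F$ in $\mathcal S'$ with no residual polynomial. It also identifies \eqref{eq:linear-inverse-physical-formula} with the normalized solution. Your remark that the constant $F(0)$ is reproduced does not do this, since agreement of Fourier transforms away from the origin leaves linear parts undetermined.

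Finally, applying $e^{-a(1-t^3)|D|^3}$ to the sublinearly growing $F(\cdot/t)$ uniformly in $t$ needs the moment bounds $\sup_{0<b\leq a}\int|x|^\gamma|K_b(x)|\,dx<\infty$, $0\leq\gamma<1$, for the kernels of $e^{-b|D|^3}$. The paper proves these and combines them with a density argument from Schwartz data.
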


If two normalized representatives of the same homogeneous class differ by
a constant \(c\), then \eqref{eq:linear-inverse-physical-formula} gives
\[
 \mathcal L^{-1}(F+c)=\mathcal L^{-1}F+c.
\]
Thus \(\mathcal L^{-1}\) descends to a bounded inverse modulo constants.
For the nonlinear boundary expressions considered later, the datum is an
actual distribution, and the physical formula selects the corresponding
representative of the output.

\begin{proof}
We first assume that \(F\in\mathcal S(\R^2)\).
Taking the Fourier transform of \(\mathcal Lv=F\) gives
\begin{equation*}
 \xi\cdot\nabla_\xi\widehat v + \bigl(3+\frac{3\sigma}{\mu}|\xi|^3\bigr)\widehat v = \widehat F.
\end{equation*}
Along a ray \(\xi=\rho\omega\), this becomes
\[
 \rho\partial_\rho\widehat v(\rho\omega) + \bigl(3+\frac{3\sigma}{\mu}\rho^3\bigr)\widehat v(\rho\omega) = \widehat F(\rho\omega).
\]
Multiplication by the integrating factor \(\rho^3e^{\frac{\sigma}{\mu}\rho^3}\) gives
\[
 \partial_\rho \left( \rho^3e^{\frac{\sigma}{\mu}\rho^3}\widehat v(\rho\omega) \right) = \rho^2e^{\frac{\sigma}{\mu}\rho^3}\widehat F(\rho\omega).
\]
Choosing the particular solution with zero integration constant gives \eqref{eq:linear-inverse-ray-formula}.

Making the change of variables \(s=t\rho\) in \eqref{eq:linear-inverse-ray-formula}, we obtain
\[
 \widehat v(\xi) = \int_0^1 t^2e^{-\frac{\sigma}{\mu}(1-t^3)|\xi|^3} \widehat F(t\xi)\,dt.
\]
Since
\[
 \mathcal F\bigl[F(\,\cdot\,/t)\bigr](\xi) = t^2\widehat F(t\xi),
\]
this proves \eqref{eq:linear-inverse-physical-formula}.

We turn to the Sobolev estimate. Set \(a:=\sigma/\mu\) and, for a
scalar function \(f\) on \((0,\infty)\), define the integral operator
\[
 (\mathcal H_af)(\rho):=\rho^{-3}e^{-a\rho^3}
 \int_0^\rho s^2e^{as^3}f(s)\,ds.
\]
We claim that, for \(0<r<2\),
\begin{equation}
\label{eq:weighted-Hardy-linear}
 \int_0^\infty \bigl(\rho^{2r+1}+\rho^{2r+7}\bigr)
 |(\mathcal H_af)(\rho)|^2\,d\rho
 \leq C_{r,a}\int_0^\infty \rho^{2r+1}|f(\rho)|^2\,d\rho.
\end{equation}
It suffices to estimate the first weight on \(0<\rho<1\) and the second
on \(\rho\geq1\). On the first region, the weighted Hardy inequality
\[ 
\int_0^1 \rho^{-\lambda} \left| \int_0^\rho h(s)\,ds \right|^2d\rho \leq \frac{4}{(\lambda-1)^2} \int_0^1 s^{2-\lambda}|h(s)|^2\,ds,\quad \lambda>1,
\]
gives
\[
 \int_0^1\rho^{2r+1}|(\mathcal H_af)(\rho)|^2\,d\rho
 \lesssim_{r,a}\int_0^1s^{2r+1}|f(s)|^2\,ds.
\]

For \(\rho\geq1\), set \(g(s):=s^{r+1/2}f(s)\). 
Then we get
\[
 \rho^{r+7/2}(\mathcal H_af)(\rho)
 =\int_0^\rho K_r(\rho,s)g(s)\,ds,\qquad
 K_r(\rho,s):=\mathbf 1_{\{s<\rho\}}\rho^{r+1/2}s^{3/2-r}
 e^{-a(\rho^3-s^3)}.
\]
For the part \(0<s<1\), the Cauchy--Schwarz inequality gives
\begin{align*}
\int_1^\infty
 \left|
  \int_0^1K_r(\rho,s)g(s)\,ds
 \right|^2d\rho\leq
 \left(
  \int_1^\infty\int_0^1
  |K_r(\rho,s)|^2\,ds\,d\rho
 \right)
 \|g\|_{L^2(0,1)}^2.
\end{align*}
Moreover,
\[
 \int_1^\infty\int_0^1
 |K_r(\rho,s)|^2\,ds\,d\rho
 \lesssim_a
 \left(
  \int_1^\infty
  \rho^{2r+1}e^{-2a\rho^3}\,d\rho
 \right)
 \left(
  \int_0^1s^{3-2r}\,ds
 \right)
 <\infty,
\]
where \(r<2\) is used.

It remains to consider \(1\leq s<\rho\).  
If \(s\leq\rho/2\), the exponential gives rapid decay, while if
\(\rho/2<s<\rho\),
\[
 K_r(\rho,s) \lesssim_{r,a} \rho^2e^{-ca\rho^2(\rho-s)}.
\]
Consequently,
\[
 \sup_{\rho\geq1} \int_1^\rho K_r(\rho,s)\,ds
 + \sup_{s\geq1} \int_s^\infty K_r(\rho,s)\,d\rho \lesssim_{r,a}1.
\]
Thus the Cauchy--Schwarz inequality gives
\[
 \left| \int_1^\rho K_r(\rho,s)g(s)\,ds \right|^2
 \lesssim_{r,a} \int_1^\rho K_r(\rho,s)|g(s)|^2\,ds.
\]
Integrating in \(\rho\) and using Fubini's theorem, we obtain
\[
 \int_1^\infty \left|
  \int_1^\rho K_r(\rho,s)g(s)\,ds
 \right|^2d\rho \lesssim_{r,a} \int_1^\infty|g(s)|^2\,ds.
\]
Combining the two regions yields
\[
 \int_1^\infty \rho^{2r+7}|(\mathcal H_af)(\rho)|^2\,d\rho \lesssim_{r,a} \int_0^\infty s^{2r+1}|f(s)|^2\,ds,
\]
which proves the high-frequency estimate and hence
\eqref{eq:weighted-Hardy-linear}.

Applying this estimate to \(f_\omega(\rho):=\widehat F(\rho\omega)\)
and integrating in \(\omega\in\mathbb S^1\) yields
\[
 \|v\|_{\dot H^r}^2+\|v\|_{\dot H^{r+3}}^2
 \leq C_{r,\sigma/\mu}\|F\|_{\dot H^r}^2.
\]
This proves \eqref{eq:linear-inverse-Sobolev} for smooth \(F\).

The same formula yields the distributional identity. Indeed,
\[
 \left(\xi\cdot\nabla_\xi+3+3a|\xi|^3\right)
 \left[t^2e^{-a(1-t^3)|\xi|^3}\widehat F(t\xi)\right]
 =\partial_t\left[t^3e^{-a(1-t^3)|\xi|^3}\widehat F(t\xi)\right].
\]
Integration over \(0<t<1\) gives
\(\mathcal L\mathcal L^{-1}F=F\) in \(\mathcal S'\).

For a general normalized datum \(F\in\dot H^r\), choose by density
\(\varphi_N\in\mathcal S\) such that
\[
 F_N:=F(0)+\varphi_N-\varphi_N(0)\longrightarrow F
 \quad\text{in }\dot H^r.
\]
The Morrey estimate \eqref{eq:normalized-Morrey-linear} also gives local
uniform convergence. Since \eqref{eq:linear-inverse-physical-formula} maps
constants to constants, the preceding estimates apply to \(F_N\). Estimate
\eqref{eq:linear-inverse-Sobolev} makes
\(\mathcal L^{-1}F_N\) Cauchy in
\(\dot H^r\cap\dot H^{r+3}\). The kernels of
\(e^{-b|D|^3}\), \(0<b\leq a\), are
\(K_b(x)=b^{-2/3}K_1(b^{-1/3}x)\).  The decay argument used below for
\(K_1\) gives, for \(0\leq\gamma<1\),
\[
 \sup_{0<b\leq a}\left(
 \|K_b\|_{L^1}+\int_{\R^2}|x|^\gamma|K_b(x)|\,dx\right)<\infty.
\]
Together with the sublinear growth in \eqref{eq:normalized-Morrey-linear},
this permits passage to the limit in
\eqref{eq:linear-inverse-physical-formula} and identifies the Sobolev limit
with the normalized distribution defined by that formula.
At \(t=0\), the Fourier boundary term is the transform of
\(tF(\cdot/t)\), and
\[
 |tF(x/t)|\leq t|F(0)|+C_rt^{2-r}|x|^{r-1}\|F\|_{\dot H^r}\longrightarrow0.
\]
Thus \(\mathcal Lv=F\) in \(\mathcal S'\) for every normalized datum.

It remains to prove uniqueness. If \(\mathcal Lw=0\) and
\(w\in\dot H^r\), then away from the origin
\[
 \widehat w(\rho\omega)=\rho^{-3}e^{-a\rho^3}A(\omega),
 \qquad \rho>0.
\]
Since \(w\in\dot H^r\), its Fourier transform belongs to
\(L^2_{\mathrm{loc}}(\R^2\setminus\{0\})\).  In particular,
\[
 \int_1^2\int_{\mathbb S^1}
 |\widehat w(\rho\omega)|^2\,d\omega\,\rho\,d\rho
 <\infty.
\]
Hence, by Fubini's theorem, for almost every
\(\rho_0\in(1,2)\) one has
\(\widehat w(\rho_0\,\cdot)\in L^2(\mathbb S^1)\).
Evaluating the raywise formula at such a \(\rho_0\) gives
\[
 A(\omega)
 =
 \rho_0^3e^{a\rho_0^3}\widehat w(\rho_0\omega),
\]
and therefore \(A\in L^2(\mathbb S^1)\).
Moreover,
\begin{align*} 
\int_{0<|\xi|<1} 
|\xi|^{2r}|\widehat w(\xi)|^2\,d\xi 
&= 
\|A\|_{L^2(\mathbb S^1)}^2 
\int_0^1 
\rho^{2r-5}e^{-2a\rho^3}\,d\rho.
\end{align*}
The radial integral diverges because \(r<2\). 
Hence \(A=0\), so \(\widehat w\) vanishes on \(\R^2\setminus\{\mathbf{0}\}\) and is therefore supported at \(\xi= \mathbf{0}\).  
A tempered distribution supported at one point is a finite linear combination of derivatives of \(\delta_0\); consequently, \(w\) is a polynomial.
The normalization reduces \(w\) to a constant, and \(\mathcal Lc=c\) forces \(c=0\).
\end{proof}

We apply the linear analysis to the unbounded cone. 
To fix the polynomial ambiguity in the notation
\(e^{-\frac{\sigma}{\mu}|D|^3}|x|\), let \(c_2\neq0\) be the dimensional
constant for which \(\widehat{|x|}(\xi)=c_2|\xi|^{-3}\) away from
\(\xi= \mathbf{0}\), and define
\begin{equation}
\label{eq:regularized-cone-normalized-definition}
 u_*:=|x|+\mathcal F^{-1}\left[c_2\left(e^{-\frac{\sigma}{\mu}|\xi|^3}-1\right)|\xi|^{-3}\right].
\end{equation}
The Fourier multiplier inside the inverse transform belongs to \(L^1(\R^2)\).
Hence \eqref{eq:regularized-cone-normalized-definition} defines an actual tempered function and fixes the polynomial ambiguity.
We use \(e^{-\frac{\sigma}{\mu}|D|^3}|x|\) as shorthand for this normalized representative. 
Its scaling gives the distinguished homogeneous solution of the linear profile equation.

\begin{lemma}
\label{lem:regularized-cone-kernel}
Let $u_*=e^{-\frac{\sigma}{\mu}|D|^3}|x|$, then \(\mathcal Lu_*=0\) in \(\mathcal S'(\R^2)\).
\end{lemma}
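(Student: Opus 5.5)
We plan to prove \(\mathcal Lu_*=0\) in two ways that support each other: a scaling argument in physical space, and a distributional check in Fourier space. The scaling argument is the cleanest. Set \(a:=\sigma/\mu\) and \(w(t,x):=t^{1/3}u_*(t^{-1/3}x)\). Using the definition \eqref{eq:regularized-cone-normalized-definition} and the homogeneity \(\widehat{|x|}=c_2|\xi|^{-3}\), we first verify that
\[
 w(t,\cdot)=|x|+\mathcal F^{-1}\bigl[c_2(e^{-at|\xi|^3}-1)|\xi|^{-3}\bigr],
\]
that is, \(w(t)=e^{-at|D|^3}|x|\) in the same normalized sense. Here the rescaling \(x\mapsto t^{-1/3}x\) together with the factor \(t^{1/3}\) multiplies the Fourier transform of \(\psi_a=u_*-|x|\) by \(t\), which converts \(e^{-a|\xi|^3}\) into \(e^{-at|\xi|^3}\). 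The multiplier \(c_2(e^{-at|\xi|^3}-1)|\xi|^{-3}\) lies in \(L^1\), and its \(t\)-derivative is \(-ac_2e^{-at|\xi|^3}\), which is also integrable. Hence \(\partial_tw=-a\,\mathcal F^{-1}[c_2e^{-at|\xi|^3}]\). Away from the origin this equals \(-a|D|^3w\), and we will interpret \(|D|^3w\) as the distribution with Fourier transform \(c_2e^{-at|\xi|^3}\), which is bounded and smooth.

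Differentiating the self-similar form at \(t=1\) gives \(\partial_tw|_{t=1}=\tfrac13(u_*-x\cdot\nabla u_*)\). Equating this with \(-a|D|^3u_*\) yields \(3a|D|^3u_*+u_*-x\cdot\nabla u_*=0\), i.e.\ \(\mathcal Lu_*=0\). To carry out this step rigorously, we pair everything against a test function \(\varphi\in\mathcal S\). The function \(u_*\) is Lipschitz (it equals \(|x|\) plus a \(C_0\) function), and \(x\cdot\nabla u_*\) grows at most linearly, so the difference quotients in \(t\) converge in \(\mathcal S'\).

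As a cross-check, we will also run the Fourier computation directly. Write \(\widehat{u_*}=c_2e^{-a|\xi|^3}|\xi|^{-3}\) on \(\R^2\setminus\{0\}\). The identity \(\widehat{x\cdot\nabla f}=-(2+\xi\cdot\nabla_\xi)\widehat f\) turns \(\mathcal L\) into \(\xi\cdot\nabla_\xi+3+3a|\xi|^3\). Along rays, \(\rho\partial_\rho(\rho^{-3}e^{-a\rho^3})=(-3-3a\rho^3)\rho^{-3}e^{-a\rho^3}\), so \(\widehat{\mathcal Lu_*}\) vanishes away from the origin. This matches the kernel profile already appearing in the uniqueness part of Proposition~\ref{prop:linear-inverse-Sobolev}.

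The main obstacle is the origin. Since \(\widehat{\mathcal Lu_*}\) is supported at \(\{0\}\), it is a priori a finite combination of derivatives of \(\delta_0\), so \(\mathcal Lu_*\) could be a polynomial. The plan is to exclude this directly in physical space. Write \(u_*=|x|+\psi_a\). The term \(3a|D|^3u_*\) is \(\mathcal F^{-1}[3ac_2e^{-a|\xi|^3}]\), a Schwartz function. Next, \(|x|-x\cdot\nabla|x|=0\). Finally, \(\psi_a-x\cdot\nabla\psi_a\) tends to zero at infinity: \(\psi_a\in C_0\) by Remark~\ref{rem:initial-profile}, and \(\widehat{\psi_a}\) is smooth away from \(0\) with the homogeneous expansion \(\widehat{\psi_a}=-ac_2+O(|\xi|^3)\) near \(0\) and \(O(|\xi|^{-3})\) at infinity, which we will use to show that \(x\cdot\nabla\psi_a\to0\). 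So \(\mathcal Lu_*\) is a polynomial that tends to zero at infinity, hence it is zero. If the decay of \(x\cdot\nabla\psi_a\) proves delicate, we fall back on the scaling argument above, which avoids the origin issue entirely because the normalization is preserved by the semigroup identity.
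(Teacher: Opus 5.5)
Your main argument is the paper's proof: identify \(t^{1/3}u_*(t^{-1/3}x)\) with the normalized regularization \(|x|+\mathcal F^{-1}\bigl[c_2(e^{-at|\xi|^3}-1)|\xi|^{-3}\bigr]\), differentiate the \(L^1\) Fourier correction in \(t\) to obtain \(-a|D|^3\), and compare with the \(t\)-derivative of the self-similar form at \(t=1\), which you justify in somewhat more detail than the paper. The Fourier cross-check is optional; note only that \(\mathcal F^{-1}[e^{-a|\xi|^3}]\) is not Schwartz, since the symbol is not smooth at \(\xi=0\), but it lies in \(C_0\) by Riemann--Lebesgue, and that is all the cross-check needs.
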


\begin{proof}
For \(t>0\), define the normalized regularization at time \(t\) by
\[
 u_{*,t}:=|x|+\mathcal F^{-1}\left[c_2\left(e^{-\frac{\sigma}{\mu}t|\xi|^3}-1\right)|\xi|^{-3}\right].
\]
The same scaling calculation as in \eqref{eq:regularized-cone-normalized-definition} gives the actual distributional identity
\begin{equation*}
 u_{*,t}=t^{1/3}u_*(t^{-1/3}x)
\end{equation*}
in \(\mathcal S'(\R^2)\).
Moreover, the Fourier correction defining \(u_{*,t}\) is differentiable in \(\mathcal S'\) for \(t>0\), with
\[
 \partial_t u_{*,t}=-\frac{\sigma}{\mu}|D|^3u_{*,t}.
\]
Differentiating the scaling identity at \(t=1\) therefore yields
\[
 -\frac{\sigma}{\mu}|D|^3u_* = \frac13 \bigl( u_*-x\cdot\nabla u_* \bigr),
\]
which is exactly \(\mathcal Lu_*=0\).
\end{proof}

\section{The conical profile and the favorable quadratic cancellation}
\label{s:cone-capillary-expansion-revised}

Throughout the remainder of the paper, \(3/2<r<2\). We work in the
correction space
\[
 X_r:=\dot H^r(\R^2)\cap\dot H^{r+3}(\R^2),
\]
with the norm
\[
 \|v\|_{X_r} := \|v\|_{\dot H^r}+\|v\|_{\dot H^{r+3}}.
\]
As usual for homogeneous spaces, elements that differ by a constant are
identified. The normalization from
Section~\ref{s:linear-similarity-operator} excludes all other polynomial
components. Whenever an actual function is required, we use the
representative selected by the construction: \(u_*\) is fixed by
\eqref{eq:regularized-cone-normalized-definition}, and
\(\mathcal L^{-1}F\) by \eqref{eq:linear-inverse-physical-formula}. In
particular, the correction obtained from the fixed-point argument is an
actual tempered distribution, not only a class modulo constants.
For a possibly unbounded asymptotically conical graph \(h\), we define
\begin{equation*}
 \mathfrak N_r(h) := \|\nabla h\|_{L^\infty} +\|D^2h\|_{L^\infty} +\|h\|_{\dot H^{5/2}} +\sum_{k=1}^3\|h\|_{\dot H^{r+k}}.
\end{equation*}
Whenever \(G(h)\) or a pointwise norm of \(h\) is used, \(h\) is understood as a fixed Lipschitz representative, so no polynomial ambiguity is present.
Notice that \(\mathfrak N_r(h)\) requires neither \(h\in L^\infty\) nor \(h\in\dot H^r\).
The \(\dot H^{5/2}\)-norm provides the low-frequency energy control for
\(\Delta h\), whereas \(\dot H^{r+1}\) controls the coefficients of the
regularized flattening.

\subsection{The regularized cone and the correction space}

We first record the regularity of the distinguished conical profile.

\begin{lemma}[Bounds for the regularized cone]
\label{lem:regularized-cone-revised}
For every \(s>2\), one has \(u_*\in\dot H^s(\R^2)\).
Moreover, \(\nabla u_*,D^2u_*\in L^\infty(\R^2)\).
In particular, for \(3/2<r<2\),
\[
 \mathfrak N_r(u_*)<\infty.
\]
\end{lemma}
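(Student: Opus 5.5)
The plan is to work entirely on the Fourier side, using the explicit formula \(\widehat{u_*}(\xi)=c_2e^{-a|\xi|^3}|\xi|^{-3}\) away from the origin, where \(a:=\sigma/\mu\). This formula follows from \eqref{eq:regularized-cone-normalized-definition} together with \(\widehat{|x|}=c_2|\xi|^{-3}\).

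\emph{Step 1: \(u_*\in\dot H^s\) for \(s>2\).} In polar coordinates,
\[
 \|u_*\|_{\dot H^s}^2=c\int_0^\infty\rho^{2s-6}e^{-2a\rho^3}\rho\,d\rho .
\]
The integral converges at infinity because of the exponential factor. At the origin the integrand behaves like \(\rho^{2s-5}\), which is integrable exactly when \(s>2\). Since the homogeneous seminorm only sees \(\widehat{u_*}\) away from the origin, the polynomial normalization plays no role here.

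\emph{Step 2: \(\nabla u_*\) and \(D^2u_*\) are bounded.} We use the decomposition \(u_*=|x|+\psi\), where \(\widehat\psi=c_2(e^{-a|\xi|^3}-1)|\xi|^{-3}\).

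For the gradient, \(\nabla|x|=x/|x|\) is bounded, so it suffices to bound \(\nabla\psi\). Its symbol \(i\xi\widehat\psi\) is \(O(|\xi|)\) near the origin and \(O(|\xi|^{-2})\) at infinity, so it lies in \(L^1(\R^2)\). Hence \(\nabla\psi\in L^\infty\).

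For second derivatives this decomposition does not work directly: \(D^2|x|\sim|x|^{-1}\) is unbounded near the origin, and the symbol of \(D^2\psi\), namely \(-\xi_j\xi_k\widehat\psi\), decays only like \(|\xi|^{-1}\), which is not integrable in two dimensions. I would therefore argue directly on \(D^2u_*\), whose symbol is
\[
 -c_2\,\xi_j\xi_k|\xi|^{-3}e^{-a|\xi|^3}.
\]
Near the origin this is \(O(|\xi|^{-1})\), which is locally integrable in \(\R^2\). At infinity it decays faster than any power. So the symbol is in \(L^1\), provided \(D^2u_*\) is exactly its inverse transform with no delta-type correction at the origin.

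\emph{Main obstacle: ruling out a delta-type correction at \(\xi=0\).} I would handle this as follows. Choose a smooth cutoff \(\chi\) equal to \(1\) near the origin. Write \(u_*=|x|+\mathcal F^{-1}[\chi\,(\cdot)]+\mathcal F^{-1}[(1-\chi)(\cdot)]\), splitting the correction symbol \(c_2(e^{-a|\xi|^3}-1)|\xi|^{-3}\) into its low- and high-frequency parts.

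1. The high-frequency piece \(\mathcal F^{-1}[(1-\chi)(\cdot)]\) has second derivatives whose symbol is bounded by \(|\xi|^{-1}\) on a region away from the origin and smooth there. Its contribution can be combined with the singular part of \(D^2|x|\) at infinity.

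2. Cleaner: note that \(D^2|x|\) is homogeneous of degree \(-1\), with Fourier transform \(-c_2\xi_j\xi_k|\xi|^{-3}\) as a homogeneous distribution. This is locally integrable, so no delta term can appear for this degree. Therefore \(D^2u_*=\mathcal F^{-1}[-c_2\xi_j\xi_k|\xi|^{-3}e^{-a|\xi|^3}]\) holds exactly.

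3. This symbol is in \(L^1\), so \(D^2u_*\in L^\infty\), and in fact \(D^2u_*\) is continuous.

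A simpler check is available for the delta issue: \(D^2\psi\) is a tempered distribution whose transform is an \(L^1_{\rm loc}\) function plus possibly terms supported at \(0\). Since \(\widehat\psi\in L^1\), the product \(\xi_j\xi_k\widehat\psi\) is a genuine function and has no point mass. So the only issue is the identity for \(D^2|x|\), which follows from homogeneity: degree \(-3\) distributions on \(\R^2\) agreeing with \(c_2|\xi|^{-3}\) away from zero, multiplied by \(\xi_j\xi_k\), give a degree \(-1\) distribution, and the only homogeneous distributions supported at the origin are derivatives of \(\delta\), with degree at most \(-2\).

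\emph{Step 3: \(\mathfrak N_r(u_*)<\infty\).} The norm \(\mathfrak N_r(u_*)\) involves only the quantities \(\|\nabla u_*\|_{L^\infty}\), \(\|D^2u_*\|_{L^\infty}\), \(\|u_*\|_{\dot H^{5/2}}\), and \(\|u_*\|_{\dot H^{r+k}}\) for \(k=1,2,3\). Since \(3/2<r<2\), the exponents satisfy \(r+k>2\) and \(5/2>2\), so all the Sobolev norms are finite by Step 1. The \(L^\infty\) terms are finite by Step 2.
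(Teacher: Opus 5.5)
\textbf{There is a genuine gap in your gradient bound.} You claim that the symbol $i\xi\widehat\psi$ of $\nabla\psi$ is $O(|\xi|^{-2})$ at infinity and therefore lies in $L^1(\R^2)$. But $|\xi|^{-2}$ is not integrable at infinity in two dimensions:
\[
 \int_{|\xi|>1}|\xi|^{-2}\,d\xi=2\pi\int_1^\infty\rho^{-1}\,d\rho=\infty .
\]
This is not a bookkeeping slip that a sharper count would fix. The function $\nabla\psi=\nabla u_*-x/|x|$ is the difference of a continuous function and one with no limit at the origin, so it is discontinuous there. Hence its Fourier transform cannot be integrable.

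Nor can you rescue the step by treating $\nabla u_*$ the way you treat $D^2u_*$. The symbol $ic_2\xi|\xi|^{-3}e^{-a|\xi|^3}$ is $O(|\xi|^{-2})$ near the origin, which is not locally integrable in $\R^2$. Some physical-space input is needed. The paper supplies it through the kernel $K=\mathcal F^{-1}(e^{-a|\xi|^3})$. The symbol is $C^2$ with integrable third weak derivatives, so $|K(x)|\lesssim(1+|x|)^{-3}$ and $K\in L^1$. Then $\nabla u_*=K*(x/|x|)$, and Young's inequality gives $\|\nabla u_*\|_{L^\infty}\le\|K\|_{L^1}$.

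Within your Fourier framework you could instead split with a cutoff $\varphi\in C_c^\infty$ equal to $1$ near the origin:
\begin{itemize}
\item The piece $\varphi\, i\xi\widehat\psi$ is integrable.
\item The piece $(1-\varphi)\,ic_2\xi|\xi|^{-3}e^{-a|\xi|^3}$ is integrable.
\item The remaining piece $-(1-\varphi)\,ic_2\xi|\xi|^{-3}$ is exactly the transform of $-(1-\varphi(D))\nabla|x|=-\nabla|x|+\check\varphi*\nabla|x|$. This is bounded because $\check\varphi\in L^1$.
\end{itemize}

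\textbf{The rest is sound.} Your Sobolev computation is the same as the paper's.

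Your $D^2$ argument is a valid alternative to the paper's route. The paper writes $D^2u_*=K*D^2|x|$ and splits $D^2|x|$ into its $L^1(B_1)$ part, paired with $\|K\|_{L^\infty}$, and its $L^\infty(\R^2\setminus B_1)$ part, paired with $\|K\|_{L^1}$. You instead show that the full symbol $-c_2\xi_j\xi_k|\xi|^{-3}e^{-a|\xi|^3}$ is integrable. You rule out point-supported corrections by homogeneity: the correction would be homogeneous of degree $-1$, while derivatives of $\delta$ in $\R^2$ have degree at most $-2$. This is correct and also yields continuity of $D^2u_*$.

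Your item~1 under the ``main obstacle'' is not a coherent argument and should be dropped in favor of item~2.
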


\begin{proof}
By the normalized Fourier representation,
\[
 \widehat u_*(\xi)=c_2e^{-\frac{\sigma}{\mu}|\xi|^3}|\xi|^{-3}
 \quad\text{away from }\xi= \mathbf{0}.
\]
Hence, for \(s>2\),
\[
 \|u_*\|_{\dot H^s}^2
 \simeq\int_0^\infty\rho^{2s-5}e^{-\frac{2\sigma}{\mu}\rho^3}\,d\rho<\infty;
\]
and the integral is finite precisely when \(s>2\).

Let
\(K_{\sigma/\mu}=\mathcal F^{-1}(e^{-\frac{\sigma}{\mu}|\xi|^3})\).
The symbol \(e^{-\frac{\sigma}{\mu}|\xi|^3}\) belongs to \(L^1\), hence
\(K_{\sigma/\mu}\in L^\infty\).
Its pure third weak derivatives also belong to \(L^1\); three integrations by parts therefore give
\[
 |K_{\sigma/\mu}(x)|\lesssim_{\sigma/\mu}(1+|x|)^{-3},
 \qquad K_{\sigma/\mu}\in L^1\cap L^\infty.
\]
Here no singular measure occurs at \(\xi=\mathbf{0}\), since the symbol
\(e^{-\frac{\sigma}{\mu}|\xi|^3}\) is \(C^2\) with locally Lipschitz
second derivatives.

Differentiating the normalized definition of \(u_*\), justified by
first mollifying \(|x|\), gives
\[
 D^ku_*=K_{\sigma/\mu}*D^k|x|,
 \qquad k=1,2.
\]
Since \(\nabla|x|\in L^\infty\), Young's inequality yields
\[
 \|\nabla u_*\|_{L^\infty}
 \leq
 \|K_{\sigma/\mu}\|_{L^1}\|\nabla|x|\|_{L^\infty}
 <\infty.
\]
Moreover,
\[
 D^2|x|
 =
 \frac1{|x|}
 \left(
  I-\frac{x\otimes x}{|x|^2}
 \right)
 \in
 L^1(B_1)\cap L^\infty(\R^2\setminus B_1).
\]
Splitting the convolution into \(|y|\leq1\) and \(|y|>1\), we obtain
\begin{align*}
 \|D^2u_*\|_{L^\infty}
 \leq
 \|K_{\sigma/\mu}\|_{L^\infty}
 \|D^2|x|\|_{L^1(B_1)}+
 \|K_{\sigma/\mu}\|_{L^1}
 \|D^2|x|\|_{L^\infty(\R^2\setminus B_1)}
 <\infty.
\end{align*}

Since \(5/2,r+1,r+2,r+3>2\), the Sobolev estimate above gives
\[
 u_*\in \dot H^{5/2} \cap\dot H^{r+1} \cap\dot H^{r+2} \cap\dot H^{r+3}.
\]
Together with the preceding \(L^\infty\)-bounds, this proves
\(\mathfrak N_r(u_*)<\infty\).
\end{proof}

The next lemma shows that the correction norm controls every quantity
appearing in \(\mathfrak N_r\).

\begin{lemma}[Compatibility of the correction space]
\label{lem:Xr-cone-embedding-revised}
Let \(3/2<r<2\).
For every normalized \(v\in X_r\),
\begin{equation*}
 \mathfrak N_r(v) \leq C_r\|v\|_{X_r}.
\end{equation*}
Consequently,
\begin{equation*}
 \mathfrak N_r(\varepsilon u_*+v) \leq C_{r,\sigma/\mu}\bigl(|\varepsilon|+\|v\|_{X_r}\bigr).
\end{equation*}
\end{lemma}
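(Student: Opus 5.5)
The plan is to bound each term of \(\mathfrak N_r(v)\) separately by \(\|v\|_{\dot H^r}+\|v\|_{\dot H^{r+3}}\), using only Plancherel and interpolation, and then to combine this with Lemma~\ref{lem:regularized-cone-revised} by the triangle inequality.

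\emph{Sobolev terms.} The indices \(5/2\) and \(r+k\) for \(k=1,2,3\) all lie in the interval \([r,r+3]\); for \(5/2\) this uses \(r<2<5/2<r+3\). For any \(s=(1-\theta)r+\theta(r+3)\) in this interval, Hölder's inequality on the Fourier side gives
\[
\|v\|_{\dot H^s}\leq\|v\|_{\dot H^r}^{1-\theta}\|v\|_{\dot H^{r+3}}^{\theta}\leq\|v\|_{X_r}.
\]
So these four terms need nothing further.

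\emph{\(L^\infty\) terms.} I would bound \(\nabla v\) and \(D^2v\) through \(L^1\) bounds on their Fourier transforms. For \(k=1,2\), split \(\widehat v\) into \(|\xi|\le1\) and \(|\xi|>1\) and apply Cauchy--Schwarz:
\[
\int_{|\xi|\le1}|\xi|^k|\widehat v|\,d\xi\leq\Bigl(\int_{|\xi|\le1}|\xi|^{2k-2r}\,d\xi\Bigr)^{1/2}\|v\|_{\dot H^r},
\]
and the first factor is finite exactly when \(2k-2r+2>0\), that is, \(r<k+1\). This holds for \(k=1\) because \(r<2\), and for \(k=2\) as well. At high frequency,
\[
\int_{|\xi|>1}|\xi|^k|\widehat v|\,d\xi\leq\Bigl(\int_{|\xi|>1}|\xi|^{2k-2r-6}\,d\xi\Bigr)^{1/2}\|v\|_{\dot H^{r+3}},
\]
which converges because \(2k-2r-6<-2\), that is, \(r>k-2\). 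This is true for \(k\le2\) since \(r>3/2\).

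\emph{Main obstacle.} The delicate step is justifying that \(\nabla v\) and \(D^2v\) really equal the inverse Fourier transforms of these \(L^1\) functions. A priori \(v\in X_r\) is only defined modulo polynomials. Here the normalization \eqref{eq:normalized-representative-linear} does the work, since it excludes every nonconstant polynomial. I would write \(\nabla v=\sum_j\nabla\dot\Delta_jv\): each block is bounded by the \(L^1\) norm of its Fourier transform, and the sum of these bounds is dominated by the integrals above, so the series converges absolutely and uniformly. Differentiating the normalized series \eqref{eq:normalized-representative-linear} term by term, which is legitimate by this uniform convergence, identifies the limit with \(\nabla v\). The same argument applies to \(D^2v\). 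This yields \(\mathfrak N_r(v)\le C_r\|v\|_{X_r}\).

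\emph{The consequence.} Every term of \(\mathfrak N_r\) is a seminorm, so \(\mathfrak N_r\) is subadditive and homogeneous. Hence
\[
\mathfrak N_r(\varepsilon u_*+v)\leq|\varepsilon|\,\mathfrak N_r(u_*)+C_r\|v\|_{X_r},
\]
and \(\mathfrak N_r(u_*)\) is a finite constant depending on \(r\) and \(\sigma/\mu\) by Lemma~\ref{lem:regularized-cone-revised}.
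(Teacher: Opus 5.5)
Your proposal is correct and follows essentially the same route as the paper: interpolation between \(\dot H^r\) and \(\dot H^{r+3}\) for the Sobolev terms, a low/high split for \(\nabla v\) and \(D^2v\) that converges exactly when \(r<k+1\) and \(r>k-2\), uniform convergence of the differentiated normalized Littlewood--Paley series to identify the limit, and the triangle inequality with Lemma~\ref{lem:regularized-cone-revised}. The only cosmetic difference is that you apply Cauchy--Schwarz to \(\widehat v\) in \(L^1\) with a split at \(|\xi|=1\), whereas the paper sums Bernstein bounds over dyadic blocks. These two are equivalent.
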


\begin{proof}
Homogeneous Sobolev interpolation gives
\[
 \|v\|_{\dot H^{5/2}}+ \|v\|_{\dot H^{r+1}}+ \|v\|_{\dot H^{r+2}} \lesssim_r\|v\|_{X_r}.
\]
For \(k=1,2\), Bernstein's inequality and a low--high split yield
\begin{align*}
 \|D^kv\|_{L^\infty} \lesssim \sum_{j\leq0}2^{(k+1-r)j}
   \bigl(2^{rj}\|\dot\Delta_jv\|_{L^2}\bigr)+\sum_{j>0}2^{(k-r-2)j}
   \bigl(2^{(r+3)j}\|\dot\Delta_jv\|_{L^2}\bigr)
 \lesssim_r\|v\|_{X_r}.
\end{align*}
Both geometric sums converge for \(k=1,2\) because \(r<2\) at low
frequency and \(r>0\) at high frequency.  
The same summation shows that the differentiated normalized Littlewood--Paley series converges uniformly.  
Combining the displayed estimates proves the first assertion.  
The second follows from the triangle inequality and Lemma~\ref{lem:regularized-cone-revised}.
\end{proof}

\subsection{The favorable quadratic cancellation}

We isolate the quadratic part of \(\mathscr F\). 
This is the only term for which the low-frequency structure of the cone requires a nonstandard estimate.

For Schwartz functions \(f,g\), the standard first shape-derivative
formula for the Dirichlet--Neumann operator at the flat interface is
\begin{equation}\label{eq:first-shape-derivative-flat}
 D_\eta G(0)[f]g = -|D|(f|D|g)-\operatorname{div}(f\nabla g).
\end{equation}
See, for example, \cite{Lannes05,Lannes11}. 
We define the bilinear forms
\begin{equation} \label{BB0Def}
 \mathcal B_0(f,g):= D_\eta G(0)[f]\Delta g, \qquad \mathcal B(f,g) := \frac12 \bigl( \mathcal B_0(f,g)+\mathcal B_0(g,f) \bigr).
\end{equation}
Then \(\mathcal B\) is symmetric and
\[
 \mathcal B(h,h)=\mathcal B_0(h,h).
\]

We shall use the following localized bilinear multiplier estimate.
For a bilinear symbol \(\sigma(\alpha,\beta)\), let
\(\mathcal M_\sigma\) be defined by
\[
 \widehat{\mathcal M_\sigma(F,G)}(\zeta)
 :=
 \int_{\R^2}
 \sigma(\zeta-\beta,\beta)
 \widehat F(\zeta-\beta)
 \widehat G(\beta)\,d\beta.
\]
If \(\sigma\) is dyadically localized and its rescaled
Coifman--Meyer seminorms are uniformly bounded, then its inverse
Fourier kernel has uniformly bounded \(L^1\)-norm.  Consequently,
\begin{equation*}
 \|\mathcal M_\sigma(F,G)\|_{L^2} \lesssim \|F\|_{L^\infty}\|G\|_{L^2}.
\end{equation*}
This is the localized form of the Coifman--Meyer multiplier estimate; see \cite{CM78}.  
The same statement holds for low--high symbols when the
low-frequency cutoff is summed before the estimate.

\begin{lemma}[Favorable quadratic cancellation]\label{lem:quadratic-cancellation-revised}
Let \(3/2<r<2\).
For \(f,g\in\mathcal S(\R^2)\), the Fourier symbol of
\(\mathcal{B}_0(f,g)\) in \eqref{BB0Def} is
\begin{equation*}
 b(\alpha,\beta) = \left( |\alpha+\beta||\beta| -(\alpha+\beta)\cdot\beta \right)|\beta|^2,
\end{equation*}
where \(\alpha\) and \(\beta\) are the frequencies of the first and second inputs, respectively.
It satisfies
\begin{equation*}
 |b(\alpha,\beta)| \leq \frac12|\alpha|^2|\beta|^2.
\end{equation*}
Moreover, in the high--high-to-low region
\[
 |\alpha|\sim|\beta|\sim N, \qquad |\alpha+\beta|\sim L\ll N,
\]
one has the symbolic estimate
\begin{equation*}
 |b(\alpha,\beta)| \lesssim LN^3.
\end{equation*}

The operator \(\mathcal B\), initially defined for Schwartz functions, has a canonical continuous extension to normalized representatives \(f,g\) satisfying
\[
	 f,g\in\dot H^{5/2}(\R^2)\cap\dot H^{r+2}(\R^2), \qquad D^2f,D^2g\in L^\infty(\R^2).
\]
The extension is obtained as the limit of the canonical normalized finite-frequency truncations described in the proof, belongs to \(\dot H^r(\R^2)\), and satisfies
\begin{align}\label{eq:B-estimate-revised}
 \|\mathcal B(f,g)\|_{\dot H^r}
 \lesssim_r{}&
 \|D^2f\|_{L^\infty}
 \|g\|_{\dot H^{r+2}}
 +
 \|f\|_{\dot H^{r+2}}
 \|D^2g\|_{L^\infty}.
\end{align}
\end{lemma}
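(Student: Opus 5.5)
The plan is to compute the symbol of \(\mathcal B_0\) directly from \eqref{eq:first-shape-derivative-flat}, read off the two pointwise bounds from an elementary identity, and then prove \eqref{eq:B-estimate-revised} by a Littlewood--Paley paraproduct decomposition, using the localized Coifman--Meyer bound recorded before the lemma.

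\emph{Symbol and pointwise bounds.} Put \(\zeta=\alpha+\beta\). In \eqref{eq:first-shape-derivative-flat}, the term \(-|D|(f|D|g)\) has symbol \(-|\zeta||\beta|\), and \(-\operatorname{div}(f\nabla g)\) has symbol \(-(i\zeta)\cdot(i\beta)=\zeta\cdot\beta\). Replacing \(g\) by \(\Delta g\) multiplies by \(-|\beta|^2\), which gives
\[
b(\alpha,\beta)=(|\zeta||\beta|-\zeta\cdot\beta)|\beta|^2 .
\]
Since \(\zeta-\beta=\alpha\), expanding \(|\alpha|^2=|\zeta|^2+|\beta|^2-2\zeta\cdot\beta\) gives the identity
\[
|\zeta||\beta|-\zeta\cdot\beta=\tfrac12\bigl(|\alpha|^2-(|\zeta|-|\beta|)^2\bigr).
\]
Hence \(0\le |\zeta||\beta|-\zeta\cdot\beta\le\frac12|\alpha|^2\), which is the global bound. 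In the high--high-to-low region we instead use the crude bound \(|\zeta||\beta|-\zeta\cdot\beta\le 2|\zeta||\beta|\lesssim LN\), which gives \(|b|\lesssim LN^3\).

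\emph{Estimate via paraproducts.} Decompose \(\mathcal B_0(f,g)=\sum_{M,N}\mathcal B_0(P_Mf,P_Ng)\) into three regions.

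In the low--high region \(M\ll N\), the output frequency is \(\sim N\). Write \(b=|\alpha|^2|\beta|^2\,m(\alpha,\beta)\) with \(m=b/(|\alpha|^2|\beta|^2)\). On the dyadic support \(|\zeta|\sim N\) is bounded away from zero, so \(m\) is smooth, and its rescaled Coifman--Meyer seminorms are uniformly bounded. After summing the low cutoff \(P_{\ll N}\) first, the localized multiplier estimate gives
\[
\|P_N\text{-output}\|_{L^2}\lesssim\|D^2f\|_{L^\infty}\,N^2\|P_Ng\|_{L^2}.
\]
Multiplying by \(N^r\) and square-summing in \(N\) yields \(\|D^2f\|_{L^\infty}\|g\|_{\dot H^{r+2}}\).

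The high--low region \(N\ll M\) is symmetric. The output frequency is \(\sim M\), and the same factorization gives \(\|f\|_{\dot H^{r+2}}\|D^2g\|_{L^\infty}\).

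In the high--high region \(M\sim N\), the output lies at some \(L\lesssim N\); localize the output with \(P_L\). For \(L\sim N\), use the global bound as above. For \(L\ll N\), write \(b=(L/N)\,|\alpha|^2|\beta|^2\,\tilde m\), where \(\tilde m\) is supported in \(|\zeta|\sim L\), \(|\alpha|\sim|\beta|\sim N\). The nonsmooth factor \(|\zeta|\) is harmless there because \(\zeta\) stays in the annulus. This gives
\[
L^r\|P_L\mathcal B_0(P_Mf,P_Ng)\|_{L^2}\lesssim (L/N)^{r+1}\|D^2f\|_{L^\infty}\,N^{r+2}\|P_Ng\|_{L^2}.
\]
Since \(r+1>0\), a Schur-type summation over \(L\le N\) closes. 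Symmetrizing gives \(\mathcal B\) and the stated right-hand side.

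\emph{Extension.} For the extension to normalized, possibly unbounded \(f,g\), truncate to finitely many frequencies: define \(\mathcal B\) on \(P_{[2^{-K},2^K]}f\) and \(P_{[2^{-K},2^K]}g\), which is legitimate because \(D^2\) kills the additive constant and the normalization excludes nonconstant polynomials. The estimate above applies uniformly in \(K\) and shows the truncations are Cauchy in \(\dot H^r\). This needs \(\|D^2P_{\le 2^K}f\|_{L^\infty}\lesssim\|D^2f\|_{L^\infty}\) and the convergence of the tails of \(\|f\|_{\dot H^{r+2}}\).

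\emph{Main obstacle.} The delicate step is the high--high-to-low piece. One must verify that the rescaled symbol \(\tilde m\) has uniformly bounded Coifman--Meyer seminorms, which requires care with \(|\zeta|\) and with \(|\zeta||\beta|-\zeta\cdot\beta\) near degeneracy. One must also check that the \((L/N)\) gain suffices in \(\dot H^r\) for all low output frequencies \(L\to0\), where the homogeneous norm gives no help. If the smoothness of \(\tilde m\) is problematic, I would first try the bilinear kernel bound directly: estimate the \(L^1\) norm of the inverse Fourier kernel of \(\tilde m\) by integrating by parts in \(\alpha\) and \(\beta\) separately on the product of the annuli.
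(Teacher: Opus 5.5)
\textbf{The gap is in the low--high region.} Your symbol computation, the identity $|\zeta||\beta|-\zeta\cdot\beta=\tfrac12\bigl(|\alpha|^2-(|\zeta|-|\beta|)^2\bigr)$, the paraproduct layout, and the Schur summation all match the paper. The low--high region is where the cancellation actually matters, and there your justification fails, although the bound you claim is true.

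You factor $b=|\alpha|^2|\beta|^2m$ and argue that $m$ is Coifman--Meyer because $|\zeta|\sim N$. That removes only the singularity of $|\zeta|$. Dividing by $|\alpha|^2$ creates a new singularity at $\alpha=0$. Expanding $|\alpha+\beta|$ for $|\alpha|\ll|\beta|$ gives
\[
 m(\alpha,\beta)=\tfrac12\bigl(1-(\hat\alpha\cdot\hat\beta)^2\bigr)+O(|\alpha|/|\beta|),
\]
which is homogeneous of degree zero in $\alpha$ and discontinuous at $\alpha=0$.

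On the ball $|\alpha|\ll N$, which is where $P_{\ll N}f$ lives once you sum the low cutoff first, $m$ therefore has no bounded rescaled seminorms. In operator terms, you are applying $R_iR_l$ to $\Delta P_{\ll N}f\in L^\infty$. If you instead localize $\alpha$ dyadically, $m$ is smooth after anisotropic rescaling but of size $O(1)$. The sum over the infinitely many $M\ll N$ then produces $\sum_M\|\Delta P_Mf\|_{L^\infty}$, which is not controlled by $\|D^2f\|_{L^\infty}$.

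The missing idea is the paper's. Since $\alpha+\beta\neq0$ there, $b$ is smooth in $\alpha$ on $|\alpha|\le2^{-C}|\beta|$, and $b(0,\beta)=\nabla_\alpha b(0,\beta)=0$. Taylor's formula then gives $b=\sum_{|\gamma|=|\delta|=2}\alpha^\gamma\beta^\delta m_{\gamma\delta}$, with $m_{\gamma\delta}$ smooth up to $\alpha=0$. The low input now enters as $\partial^\gamma P_{\ll N}f$, which is bounded by $\|D^2f\|_{L^\infty}$. In short, you must factor out monomials $\alpha^\gamma$, not $|\alpha|^2$.

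\textbf{The high--low region is not symmetric.} When the low frequency sits in the $\Delta$-slot, $b=|\beta|^3(|\zeta|-\zeta\cdot\hat\beta)$ contains $|\beta|$ and $\hat\beta$, so the summed-cutoff argument fails there too. What rescues it is an extra factor $|\beta|/|\alpha|$, which lets you work block by block. By the physical formula and Bernstein,
\[
 \|\mathcal B_0(f_M,g_N)\|_{L^2}\lesssim (N/M)\|D^2g_N\|_{L^\infty}\|D^2f_M\|_{L^2},
\]
followed by geometric summation, as in the paper.

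\textbf{The step you flag as the main obstacle is easy.} The identity $\mathcal B_0(f,g)=|D|(f|D|^3g)-\operatorname{div}(f\nabla\Delta g)$ puts a derivative on the output. Bernstein on the annular inputs then gives the factor $L/N$ with no symbol regularity at all. Your kernel fallback would in any case need coordinates $(\zeta,\beta)$, because the output cutoff at scale $L$ couples $\alpha$ and $\beta$.

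\textbf{The extension step needs one more input.} Uniform bounds on $\|D^2P_{\le2^K}f\|_{L^\infty}$ do not make the truncations Cauchy. You need $\|D^2(f-f^{(K)})\|_{L^\infty}\to0$, and this is where the $\dot H^{5/2}$ hypothesis enters: Bernstein at low frequency uses $\dot H^{5/2}$, and at high frequency uses $\dot H^{r+2}$.
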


\begin{proof}
For Schwartz inputs, \eqref{eq:first-shape-derivative-flat} gives, with
\(\zeta=\alpha+\beta\),
\[
 \widehat{\mathcal B_0(f,g)}(\zeta)
 =\int b(\alpha,\beta)\widehat f(\alpha)\widehat g(\beta)\,d\beta,
 \qquad
 b(\alpha,\beta)
 =\left(|\alpha+\beta||\beta|-(\alpha+\beta)\cdot\beta\right)|\beta|^2.
\]
Set
\(A=|\alpha+\beta||\beta|-(\alpha+\beta)\cdot\beta\).  Then
\[
 |\alpha|^2
 =\bigl(|\alpha+\beta|-|\beta|\bigr)^2+2A,
 \qquad 0\leq A\leq\tfrac12|\alpha|^2.
\]
This proves \(|b(\alpha,\beta)|\leq\frac12|\alpha|^2|\beta|^2\).
The alternative bound \(A\leq2|\alpha+\beta||\beta|\) gives
\(|b|\lesssim LN^3\) in the high--high-to-low region.

We prove the bilinear estimate by writing
\(f_j=\dot\Delta_jf\), \(g_k=\dot\Delta_kg\), and separating the usual
low--high, high--low, and high--high interactions.  All cutoffs below are
chosen with a fixed positive separation integer \(C\).

Suppose first that \(j\leq k-C\).  On the support of this interaction,
\(|\alpha|\ll|\beta|\) and \(|\alpha+\beta|\simeq|\beta|\).  Since
\(b(0,\beta)=\nabla_\alpha b(0,\beta)=0\), Taylor's formula shows that
\[
 b(\alpha,\beta)
 =\sum_{|\gamma|=|\delta|=2}
  \alpha^\gamma\beta^\delta m_{\gamma\delta}(\alpha,\beta),
\]
where, after restricting to \(|\alpha|\leq2^{-C}|\beta|\), the dyadically
rescaled symbols \(m_{\gamma\delta}(2^k\cdot,2^k\cdot)\) have uniformly
bounded Coifman--Meyer seminorms.  For the transposed term, the identity
\[
 \frac{b(\beta,\alpha)}{|\beta|^2|\alpha|^2}
 =\frac{|\alpha+\beta|}{|\beta|}\frac{|\alpha|}{|\beta|}
  -\frac{\beta}{|\beta|}\cdot\frac{\alpha}{|\beta|}
  -\frac{|\alpha|^2}{|\beta|^2}
\]
exhibits an additional factor \(|\alpha|/|\beta|\).  Rather than
differentiating the nonsmooth factor \(|\alpha|\) at the origin, we use the
physical-space identity for \(\mathcal B_0(g_k,f_j)\) and Bernstein's
inequality to obtain
\[
 \|\mathcal B_0(g_k,f_j)\|_{L^2}
 \lesssim 2^{j-k}\|D^2f_j\|_{L^\infty}\|D^2g_k\|_{L^2}.
\]
Since \(\|D^2f_j\|_{L^\infty}\lesssim\|D^2f\|_{L^\infty}\), summation
over \(j\leq k-C\) is geometric.  Combining this with the
Coifman--Meyer estimate for the nontransposed term gives
\begin{equation}
\label{eq:B-low-high-detail}
 \left\|\sum_{j\leq k-C}\mathcal B(f_j,g_k)\right\|_{L^2}
 \lesssim \|D^2f\|_{L^\infty}\|D^2g_k\|_{L^2}.
\end{equation}
Here the low-pass kernels are uniformly integrable, so no untruncated singular integral acts on an \(L^\infty\) function.  
Multiplying \eqref{eq:B-low-high-detail} by \(2^{rk}\), taking the \(\ell^2_k\)-norm, and then interchanging \(f,g\) controls both paraproduct regions by the right-hand side of \eqref{eq:B-estimate-revised}.

Next suppose \(|j-k|\leq C\) and the output frequency is \(2^q\) with \(q\geq j-C\).  
All nonzero frequencies are then comparable.  
Dividing the localized symbol by \(|\alpha|^2|\beta|^2\) produces a uniformly
Coifman--Meyer family; the possible nonsmoothness of
\(|\alpha+\beta|\) is excluded by the output cutoff.  Finite overlap in
\(j,k,q\) therefore gives the same two terms as in
\eqref{eq:B-estimate-revised}.

It remains to treat \(|j-k|\leq C\) and \(q\leq j-C\). The identity
\[
 \mathcal B_0(f,g)
 =|D|\bigl(f|D|^3g\bigr)-\operatorname{div}(f\nabla\Delta g)
\]
places one derivative on the low output.  Bernstein's inequality on the
annular inputs gives, term by term and also for the transpose,
\begin{align}
\label{eq:B-high-high-low-detail}
 \|\dot\Delta_q\mathcal B(f_j,g_k)\|_{L^2}
 \lesssim 2^{q-j}\bigl(
 \|D^2f_j\|_{L^\infty}\|D^2g_k\|_{L^2}
 +\|D^2f_j\|_{L^2}\|D^2g_k\|_{L^\infty}\bigr).
\end{align}
For example, the first product on the right arises from
\(2^q\|f_j\|_{L^\infty}2^{3k}\|g_k\|_{L^2}\), which is bounded by the
displayed quantity because \(|j-k|\leq C\).  
For \(j\in\mathbb Z\), set
\[
 a_j:=2^{rj}\sum_{|k-j|\leq C}\|D^2g_k\|_{L^2},
 \qquad \text{so that } \quad\|a\|_{\ell^2}\lesssim_r\|g\|_{\dot H^{r+2}}.
\]
After multiplying \eqref{eq:B-high-high-low-detail} by \(2^{rq}\), the
first term is bounded by
\[
 \|D^2f\|_{L^\infty}
 \sum_{j\geq q+C}2^{-(r+1)(j-q)}a_j.
\]
The convolution kernel belongs to \(\ell^1\), and discrete Young's
inequality gives the required \(\ell^2_q\)-bound.  
The second term is symmetric.  
This proves \eqref{eq:B-estimate-revised} for Schwartz inputs.

It remains to pass from Schwartz functions to the rough inputs needed
later. Let \(f,g\) satisfy
\[
 f,g\in\dot H^{5/2}(\R^2)\cap\dot H^{r+2}(\R^2),
 \qquad
 D^2f,D^2g\in L^\infty(\R^2).
\]
We use the normalized truncations
\[
 f^{(N)}(x)=f(0)+\sum_{|j|\leq N}
 \bigl(\dot\Delta_jf(x)-\dot\Delta_jf(0)\bigr),
\]
and similarly for \(g\). 
These truncations converge in
\(
 \dot H^{5/2}(\R^2)\cap\dot H^{r+2}(\R^2)
\).
Moreover, Bernstein's inequality and the Cauchy--Schwarz inequality
give
\begin{align*}
 \|D^2(f-f^{(N)})\|_{L^\infty}
 \lesssim \left(
  \sum_{j<-N}2^j \right)^{1/2} \|f\|_{\dot H^{5/2}}+ \left( \sum_{j>N}2^{2(1-r)j}
 \right)^{1/2} \|f\|_{\dot H^{r+2}}
 \longrightarrow 0.
\end{align*}
The same conclusions hold for \(g^{(N)}\). For \(M,N\geq1\), bilinearity gives
\begin{align*}
\mathcal B(f^{(N)},g^{(N)})
 - \mathcal B(f^{(M)},g^{(M)})=
 \mathcal B(f^{(N)}-f^{(M)},g^{(N)})
 + \mathcal B(f^{(M)},g^{(N)}-g^{(M)}).
\end{align*}
Applying \eqref{eq:B-estimate-revised}, together with the preceding
convergences and the uniform bounds on the truncations, shows that \(
 \bigl\{ \mathcal B(f^{(N)},g^{(N)}) \bigr\}_{N\geq1}
\)
is Cauchy in \(\dot H^r\). 
We therefore define
\[
 \mathcal B(f,g) := \lim_{N\to\infty} \mathcal B(f^{(N)},g^{(N)})
\]
as a homogeneous \(\dot H^r\)-class. 
Estimate \eqref{eq:B-estimate-revised} passes to the limit and gives the asserted continuous bilinear extension.

The identification of this homogeneous class with the quadratic form generated by \(D_\eta G(0)\), and the resulting choice of its tempered-distribution representative, are supplied by
Lemma~\ref{lem:quadratic-compatibility}.
\end{proof}

\section{Finite-order cone-adapted Dirichlet--Neumann estimates}
\label{s:finite-order-DN}

We establish the Dirichlet--Neumann estimates used in the nonlinear construction. 
Since the graph is asymptotically conical, the high Sobolev norm alone does not control the low-frequency energy. 
We therefore retain two lower-order norms of the Dirichlet datum.
For \(3/2<r<2\), set
\begin{equation*}
 \mathcal A_r(h) := \|\nabla h\|_{L^\infty} +\|h\|_{\dot H^{r+1}}, \qquad \mathcal D_r(f) := \|f\|_{\dot H^{1/2}} +\|f\|_{\dot H^1} +\|f\|_{\dot H^{r+1}}.
\end{equation*}
Let \(\mathfrak A_r\) denote the space of Lipschitz functions \(h\) with \(\mathcal A_r(h)<\infty\), modulo additive constants and equipped with the norm \(\mathcal A_r\). 
Let \(\mathfrak D_r\) be the completion of \(\mathcal S(\R^2)\) modulo constants under the norm \(\mathcal D_r\).
Equivalently, it is the compatible intersection of the three homogeneous Sobolev data classes appearing in \(\mathcal D_r\).

The proof is carried out in mixed low--high norms on the lower half-space.
If \((\dot\Delta_j)_{j\in\mathbb Z}\) is a homogeneous Littlewood--Paley decomposition, we define
\begin{equation*}
\begin{aligned}
 \|V\|_{\mathcal X_r} :={}& \|\nabla_{x,z}V\|_{L^\infty_zL^2_x} +\|\nabla_{x,z}V\|_{L^2_{x,z}}\\
 &+ \left( \sum_{j>0}2^{2rj} \|\dot\Delta_j\nabla_{x,z}V\|_{L^\infty_zL^2_x}^2 \right)^{1/2}+ \left( \sum_{j>0}2^{(2r+1)j} \|\dot\Delta_j\nabla_{x,z}V\|_{L^2_{x,z}}^2 \right)^{1/2},
\end{aligned}
\end{equation*}
where all the \(z\)-norms are taken on \(J=(-\infty,0)\).
For a vector field \(F\) on the lower half-space, let
\begin{equation*}
 \|F\|_{\mathcal Y_r} := \|F\|_{L^\infty_zL^2_x} + \|F\|_{L^2_{x,z}}+ \left( \sum_{j>0}2^{2rj} \|\dot\Delta_jF\|_{L^\infty_zL^2_x}^2 \right)^{1/2}+ \left( \sum_{j>0}2^{(2r+1)j} \|\dot\Delta_jF\|_{L^2_{x,z}}^2 \right)^{1/2}.
\end{equation*}
Thus \(\mathcal Y_r\) has the same low--high \(z\)-integrability pattern as the gradient norm in \(\mathcal X_r\).
Only the positive horizontal frequencies carry a high Sobolev weight; this is essential for conical graphs.

\begin{equation*}
 \|A\|_{\mathcal E_r} :=\|A\|_{L^\infty_{x,z}}+ \left( \sum_{j\in\mathbb Z}2^{2rj} \|\dot\Delta_jA\|_{L^\infty_zL^2_x}^2 \right)^{1/2}+ \left( \sum_{j\in\mathbb Z}2^{(2r+1)j} \|\dot\Delta_jA\|_{L^2_{x,z}}^2 \right)^{1/2}.
\end{equation*}
For vector- and matrix-valued functions this norm is understood componentwise.
The homogeneous dyadic terms are taken modulo polynomials, while the \(L^\infty\)-representative fixes the coefficients pointwise.
In particular, constants belong to \(\mathcal E_r\).

We begin with the algebra and composition properties of the coefficient space.

\begin{lemma}[Coefficient algebra and composition]
\label{lem:coefficient-algebra}
Let \(r>1\).
Then \(\mathcal E_r\) is a Banach algebra and
\begin{equation*}
 \|AB\|_{\mathcal E_r} \leq C_r \left( \|A\|_{L^\infty}\|B\|_{\mathcal E_r} + \|B\|_{L^\infty}\|A\|_{\mathcal E_r} \right).
\end{equation*}

Let \(\Phi\) be a \(C^\infty\) function on a neighborhood of the closed ball \(\{p:|p|\leq M\}\) in a finite-dimensional vector space, and define the Nemytskii map
\[
 \mathcal N_\Phi(A):=\Phi\circ A.
\]
Then \(\mathcal N_\Phi\) is \(C^4\) from 
\[
 \left\{ A\in\mathcal E_r: \|A\|_{L^\infty}<M \right\}
\]
to \(\mathcal E_r\).
More precisely, on every set on which
\[
 \|A\|_{\mathcal E_r}\leq M_1, \qquad \|A\|_{L^\infty}\leq M_0<M,
\]
and for \(1\leq m\leq4\), one has
\begin{equation*}
 \left\| D^m\mathcal N_\Phi(A)[B_1,\ldots,B_m] \right\|_{\mathcal E_r} \leq C_{\Phi,M_0,M_1,r,m} \prod_{\nu=1}^m \|B_\nu\|_{\mathcal E_r}.
\end{equation*}
\end{lemma}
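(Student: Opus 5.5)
The proof splits into a bilinear estimate for the algebra property and a chain-rule/Taylor argument for the Nemytskii map. Write the norm as \(\|A\|_{\mathcal E_r}=\|A\|_{L^\infty}+\|A\|_{\dot{\mathcal E}_r}\), where \(\|\cdot\|_{\dot{\mathcal E}_r}\) consists of the two dyadic sums. Both dyadic sums are Besov-type quantities in \(x\) with \(z\) frozen: the first is \(\sup_z\)-type (actually \(\ell^2_j L^\infty_z L^2_x\)), the second is \(\ell^2_j L^2_z L^2_x\).

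\textbf{Algebra property.} I would run Bony's paraproduct decomposition in \(x\) for each fixed \(z\):
\[
AB=T_AB+T_BA+R(A,B).
\]
For the low--high term, \(\dot\Delta_j T_AB\) is a sum over \(|k-j|\leq 3\) of \(S_{k-1}A\,\dot\Delta_kB\). Since \(\|S_{k-1}A\|_{L^\infty_{x,z}}\lesssim\|A\|_{L^\infty}\), taking \(L^\infty_zL^2_x\) or \(L^2_{x,z}\) norms pointwise in \(z\) gives
\[
2^{rj}\|\dot\Delta_jT_AB\|\lesssim\|A\|_{L^\infty}\sum_{|k-j|\leq3}2^{rk}\|\dot\Delta_kB\|.
\]
Summing in \(\ell^2_j\) bounds this term by \(\|A\|_{L^\infty}\|B\|_{\mathcal E_r}\); the term \(T_BA\) is symmetric. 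For the remainder, \(\dot\Delta_jR(A,B)\) involves \(k\geq j-3\), and the bound
\[
\|\dot\Delta_kA\,\tilde\Delta_kB\|_{L^2_x}\leq\|\tilde\Delta_kB\|_{L^\infty}\|\dot\Delta_kA\|_{L^2_x}
\]
gives
\[
2^{rj}\|\dot\Delta_jR\|\lesssim\|B\|_{L^\infty}\sum_{k\geq j-3}2^{-r(k-j)}\,2^{rk}\|\dot\Delta_kA\|.
\]
Discrete Young applies because \(r>0\). The \(L^\infty\) part is trivial. The requirement \(r>1\) enters only in the caveat below.

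\textbf{Caveat: representatives modulo polynomials.} The homogeneous dyadic pieces are only defined modulo polynomials. I would argue as follows: the \(L^\infty\) bound excludes polynomial growth, so the low-frequency blocks of a bounded function are well defined. Bony's decomposition is then applied to genuinely bounded functions, and the dyadic series converge in \(\mathcal S'_h\) after subtracting the zero mode. This is the step I expect to need the most care, but it is standard.

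\textbf{Composition.} I would first reduce to \(\Phi(0)=0\) by subtracting a constant, which lies in \(\mathcal E_r\). Then I would prove the tame estimate
\[
\|\Phi\circ A\|_{\dot{\mathcal E}_r}\leq C(\Phi,M_0)\|A\|_{\dot{\mathcal E}_r}
\]
by Meyer's telescoping argument: \(\Phi(A)=\sum_k\bigl(\Phi(S_{k+1}A)-\Phi(S_kA)\bigr)=\sum_k m_k\dot\Delta_kA\), with
\[
m_k=\int_0^1\Phi'\bigl(S_kA+\tau\dot\Delta_kA\bigr)\,d\tau.
\]
I need \(\|S_kA\|_{L^\infty}\leq M\), so I would use a smooth extension of \(\Phi\) to all of the space with bounded derivatives. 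The multiplier \(m_k\) satisfies \(\|\partial_x^\gamma m_k\|_{L^\infty}\lesssim 2^{k|\gamma|}\) uniformly in \(z\), because \(\|S_kA\|_{L^\infty}\lesssim\|A\|_{L^\infty}\). The standard estimate \(\|\dot\Delta_j(m_k\dot\Delta_kA)\|\lesssim 2^{-N(j-k)_+}\|\dot\Delta_kA\|\) with \(N>r\) then yields the \(\ell^2\) bound in both the \(L^\infty_zL^2_x\) and the \(L^2_{x,z}\) scales, since all estimates are pointwise in \(z\). For the full space with the low-frequency sum, I would split \(k\) and use \(j\leq k\) decay for the low part; the bound \(\|\Phi(A)\|_{L^\infty}\leq\sup|\Phi|\) is immediate.

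\textbf{Differentiability.} For derivatives, the candidate is
\[
D^m\mathcal N_\Phi(A)[B_1,\dots,B_m]=(\Phi^{(m)}\circ A)\,B_1\cdots B_m,
\]
which by the algebra property and the composition bound applied to \(\Phi^{(m)}\) is bounded by \(C\prod\|B_\nu\|_{\mathcal E_r}\). To show these are genuine Fréchet derivatives up to order \(4\), I would use Taylor's formula with integral remainder,
\[
\Phi(A+B)-\sum_{m\leq4}\frac{1}{m!}\Phi^{(m)}(A)B^m=\frac{1}{4!}\int_0^1(1-\tau)^4\cdot 5\,\Phi^{(5)}(A+\tau B)B^5\,d\tau.
\]
Here I would apply the composition estimate uniformly in \(\tau\), which is fine for \(\|B\|_{L^\infty}\) small so that \(\|A+\tau B\|_{L^\infty}<M\), together with the algebra property. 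This gives an \(O(\|B\|^5)\) remainder, and the same argument applied to \(\Phi^{(m)}\) gives continuity of \(D^4\mathcal N_\Phi\). Bochner integrability in \(\mathcal E_r\) follows from the uniform bounds. Since \(\Phi\) is \(C^\infty\), it has the required \(\Phi^{(5)}\).
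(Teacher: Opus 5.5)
Your proposal is correct and follows the paper's route. You use the horizontal Bony decomposition at the two dyadic scales \((r,L^\infty_z)\) and \((r+\tfrac12,L^2_z)\) for the tame product bound, and a homogeneous Moser estimate for \(\Phi\circ A\); you make the paper's ``same Bony argument'' concrete via Meyer's telescoping, and you usefully extend \(\Phi\) so that \(S_kA\) may leave the ball. You then combine the formula \(D^m\mathcal N_\Phi(A)[B_1,\ldots,B_m]=\Phi^{(m)}(A)B_1\cdots B_m\) with the algebra bound, and use Taylor's formula with integral remainder for Fr\'echet differentiability.

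Only minor repairs are needed. The Taylor remainder constant is \(\frac1{4!}\), without the extra factor \(5\). A bounded \(A\) need not lie in \(\mathcal S'_h\) even after subtracting a constant, so start the telescoping at a finite \(k_0\) and check that \(\dot\Delta_j\Phi(S_{k_0}A)\to0\) as \(k_0\to-\infty\); this holds because its \(x\)-gradient is \(O(2^{k_0})\) and \(\dot\Delta_j\) kills constants. Completeness of \(\mathcal E_r\), which the paper notes in one line, follows componentwise from the \(L^\infty\) limit.
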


\begin{proof}
For \(s>0\) and \(q\in\{2,\infty\}\), we set
\[
 \|U\|_{\widetilde L^q_z\dot H^s_x} := \left( \sum_{j\in\mathbb Z} 2^{2sj} \|\dot\Delta_jU\|_{L^q_zL^2_x}^2 \right)^{1/2}.
\]
Thus, the \(\mathcal E_r\)-norm satisfies
\[
 \|A\|_{\mathcal E_r} = \|A\|_{L^\infty_{x,z}} + \|A\|_{\widetilde L^\infty_z\dot H^r_x} + \|A\|_{\widetilde L^2_z\dot H^{r+1/2}_x}.
\]
The horizontal Bony decomposition gives, for every \(s>0\),
\[
 \|AB\|_{\widetilde L^q_z\dot H^s_x}
 \lesssim_s \|A\|_{L^\infty}\|B\|_{\widetilde L^q_z\dot H^s_x}
 +\|B\|_{L^\infty}\|A\|_{\widetilde L^q_z\dot H^s_x}.
\]
For the resonant term, the high--high-to-low summation has kernel
\(2^{-s(j-m)}\mathbf1_{\{j\geq m-C\}}\in\ell^1\); the paraproducts are immediate.  
Taking \((s,q)=(r,\infty)\) and
\((r+\frac12,2)\), together with the pointwise product estimate, proves the algebra bound.  
Completeness follows componentwise in the three norms, with the \(L^\infty\)-representative fixing the homogeneous ambiguity.

The same Bony argument gives the mixed homogeneous Moser estimate
\[
 \|\Psi(A)-\Psi(0)\|_{\widetilde L^q_z\dot H^s_x}
 \leq C_{\Psi,\|A\|_{L^\infty},s}
 \|A\|_{\widetilde L^q_z\dot H^s_x}.
\]
Applying it to \(D^m\Phi(A)\), \(1\leq m\leq4\), and using the algebra estimate repeatedly in the pointwise identity
\[
 D^m\mathcal N_\Phi(A)[B_1,\ldots,B_m]
 =D^m\Phi(A)[B_1,\ldots,B_m],
\]
we obtain the stated bounds.  
The mean-value formula makes these derivatives locally Lipschitz in \(A\), and Taylor's formula with integral remainder proves the required Fréchet differentiability through order four.
\end{proof}

The flattening map will be built from a scale-dependent regularization of the graph. 
The next lemma records the corresponding coefficient and trace bounds.

\begin{lemma}[Regularized extension bounds]
\label{lem:regularized-extension-two-level}
Fix an even function \(\chi\) such that
\[
 \chi(x)\in C^\infty_c(\R^2), \qquad \int_{\R^2}\chi(x) dx =1,
\]
and define its rescaling by
\[
 \chi_\tau(x):=\tau^{-2}\chi(x/\tau), \qquad \tau>0.
\]
For \(h\in\mathfrak A_r\) and \(z<0\), we define
\[
 H_h(x,z):=\chi_{-z}*h(x), \qquad E_h:=\nabla_{x,z}H_h.
\]
We also set \(H_h(x,0)=h(x)\) in the trace sense.
Then
\begin{equation}
\label{eq:Eh-Linfty}
 \|E_h\|_{L^\infty_{x,z}} \leq C_\chi\|\nabla h\|_{L^\infty}.
\end{equation}
Moreover, for every \(N\geq0\),
\begin{equation*}
 \|\dot\Delta_jE_h(\cdot,z)\|_{L^2} \leq C_{N,\chi} 2^j(1+2^j|z|)^{-N} \|\dot\Delta_jh\|_{L^2}.
\end{equation*}
Consequently,
\begin{align}
\label{eq:Eh-high}
 \left( \sum_{j\in\mathbb Z} 2^{2rj} \|\dot\Delta_jE_h\|_{L^\infty_zL^2_x}^2 \right)^{1/2}+ \left( \sum_{j\in\mathbb Z} 2^{(2r+1)j} \|\dot\Delta_jE_h\|_{L^2_{x,z}}^2 \right)^{1/2} \leq C_{r,\chi}\|h\|_{\dot H^{r+1}}.
\end{align}
In particular,
\begin{equation*}
 \|E_h\|_{\mathcal E_r} \leq C_{r,\chi}\mathcal A_r(h).
\end{equation*}
Finally, as \(z\uparrow0\),
\begin{equation}
\label{eq:regularized-extension-boundary-traces}
\begin{aligned}
 H_h(\cdot,z)&\longrightarrow h \qquad \text{locally uniformly and in }\dot H^{r+1},\\
 \nabla_xH_h(\cdot,z)&\longrightarrow\nabla h \quad\text{in }\dot H^r,
 \qquad \partial_zH_h(\cdot,z)\longrightarrow0 \quad\text{in }\dot H^r.
\end{aligned}
\end{equation}
\end{lemma}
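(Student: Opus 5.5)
The argument works entirely on the Fourier side in the horizontal variable, for fixed \(z<0\). We use that \(\widehat{\chi_\tau}(\xi)=\widehat\chi(\tau\xi)\), where \(\widehat\chi\) is Schwartz, even, and satisfies \(\widehat\chi(0)=1\).

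\textbf{The \(L^\infty\) bound.} Since \(\chi_{-z}\) has unit mass, \(\nabla_xH_h=\chi_{-z}*\nabla h\). For the vertical derivative,
\[
\partial_z\chi_{-z}(x)=(-z)^{-1}\bigl(2\chi+y\cdot\nabla\chi\bigr)_{-z}(x)=(-z)^{-1}\tilde\chi_{-z}(x),
\]
where \(\tilde\chi=\operatorname{div}(y\chi)\) has zero mean. We write \(\tilde\chi=\operatorname{div}\Psi\) with \(\Psi=y\chi\in C_c^\infty\). Then \(\partial_zH_h=\Psi_{-z}*\nabla h\), because the factor \((-z)^{-1}\) is absorbed when the divergence is moved onto \(h\). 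Young's inequality now gives \eqref{eq:Eh-Linfty}, and this identity never uses \(h\) itself, only \(\nabla h\).

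\textbf{The dyadic bound.} On frequencies \(|\xi|\sim2^j\), the horizontal component has symbol \(i\xi\widehat\chi(-z\xi)\) and the vertical component has symbol \(-\xi\cdot\nabla\widehat\chi(-z\xi)\). Both are bounded by \(C_N2^j(1+2^j|z|)^{-N}\) because \(\widehat\chi\) is Schwartz. Plancherel then gives the stated estimate.

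\textbf{Summing to \eqref{eq:Eh-high}.}
\begin{itemize}
\item For the \(L^\infty_z\) part, take \(N=0\). This gives \(2^{rj}\|\dot\Delta_jE_h\|_{L^\infty_zL^2}\lesssim 2^{(r+1)j}\|\dot\Delta_jh\|_{L^2}\), whose \(\ell^2\) sum is \(\|h\|_{\dot H^{r+1}}\).
\item For the \(L^2_z\) part, take \(N=1\). Then
\[
\int_{-\infty}^0 2^{2j}(1+2^j|z|)^{-2}\,dz\lesssim 2^{j},
\]
so \(2^{(2r+1)j}\|\dot\Delta_jE_h\|_{L^2_{x,z}}^2\lesssim 2^{2(r+1)j}\|\dot\Delta_jh\|_{L^2}^2\).
\end{itemize}
Adding the two parts to the \(L^\infty\) bound gives \(\|E_h\|_{\mathcal E_r}\lesssim\mathcal A_r(h)\). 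One point needs care: \(h\) is only Lipschitz, so its blocks \(\dot\Delta_j h\) must be read modulo polynomials. Since \(\nabla h\in L^\infty\), the class of \(E_h\) in the dyadic seminorms is well defined, and the \(L^\infty\) representative fixes it pointwise.

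\textbf{Traces, \eqref{eq:regularized-extension-boundary-traces}.} The local uniform convergence follows from \(|\chi_{-z}*h-h|(x)\le\|\nabla h\|_{L^\infty}\int|y||\chi_{-z}(y)|\,dy\lesssim|z|\,\|\nabla h\|_{L^\infty}\). The \(\dot H^{r+1}\) convergence of \(H_h\) and the \(\dot H^r\) convergence of \(\nabla_xH_h\) follow by dominated convergence applied to \(|\widehat\chi(-z\xi)-1|^2|\xi|^{2r+2}|\widehat h|^2\), since \(\widehat\chi(-z\xi)\) is bounded and tends to \(1\). The same argument applied to the symbol \(\xi\cdot\nabla\widehat\chi(-z\xi)\) shows \(\partial_zH_h\to0\) in \(\dot H^r\), because \(\nabla\widehat\chi(0)=0\) (as \(\chi\) is even) and this symbol is bounded by \(C|\xi|\).

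The main obstacle is making the homogeneous framework rigorous for an unbounded \(h\) with only \(\nabla h\in L^\infty\). The remedy is to work throughout with \(\nabla h\) and never with \(h\), as the \(\Psi\) identity above does.
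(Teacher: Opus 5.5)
Your proposal is correct and follows the paper's proof essentially step by step. It uses the same Fourier symbols $i\xi\widehat\chi(\tau\xi)$ and $-\xi\cdot\nabla\widehat\chi(\tau\xi)$ for the dyadic bound and its $L^\infty_z$ and $L^2_z$ summation, and the same dominated-convergence argument with $\nabla\widehat\chi(0)=0$ for the traces; the only cosmetic difference is that you bound $\partial_zH_h$ in $L^\infty$ via the exact identity $\partial_zH_h=\Psi_{-z}*\nabla h$ with $\Psi=y\chi$, where the paper instead uses $\int\partial_\tau\chi_\tau=0$ together with $|h(x-y)-h(x)|\le|y|\,\|\nabla h\|_{L^\infty}$ — both rest on the same zero-mean cancellation.
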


\begin{proof}
Set \(\tau:=-z>0\).
The tangential estimate follows from \(\nabla_xH_h=\chi_\tau*\nabla h\). 
Since \(\int\partial_\tau\chi_\tau =0\),
\[
 \partial_zH_h(x,z)
 =-\int\partial_\tau\chi_\tau(y)\bigl(h(x-y)-h(x)\bigr)\,dy,
\]
and scaling gives
\(\int |y||\partial_\tau\chi_\tau(y)|\,dy\lesssim_\chi1\).
This proves \eqref{eq:Eh-Linfty}.

On the Fourier side,
\[
 \widehat{\nabla_xH_h}=i\xi\widehat\chi(\tau\xi)\widehat h, \qquad
 \widehat{\partial_zH_h}=-\xi\cdot\nabla\widehat\chi(\tau\xi)\widehat h.
\]
The Schwartz decay of \(\widehat\chi\) therefore yields
\[
 \|\dot\Delta_jE_h(\cdot,z)\|_{L^2}
 \lesssim_{N,\chi}2^j(1+2^j|z|)^{-N}
 \|\dot\Delta_jh\|_{L^2}.
\]
Taking the \(L^\infty_z\)-norm, or the \(L^2_z\)-norm and using
\(\|(1+2^j|z|)^{-N}\|_{L^2_z}\lesssim_N2^{-j/2}\), gives
\eqref{eq:Eh-high} after summation.  Together with
\eqref{eq:Eh-Linfty}, this proves
\(\|E_h\|_{\mathcal E_r}\lesssim\mathcal A_r(h)\).

Finally,
\[
 |H_h(x,-\tau)-h(x)|\lesssim_\chi
 \tau\|\nabla h\|_{L^\infty},
\]
while \(\widehat\chi(\tau\xi)\to1\).  Dominated convergence gives the
first two limits in \eqref{eq:regularized-extension-boundary-traces}.
Evenness of \(\chi\) implies \(\nabla\widehat\chi(0)=0\); applying dominated
convergence to \(-\xi\cdot\nabla\widehat\chi(\tau\xi)\widehat h\) gives the
normal-trace limit.
\end{proof}

After factoring the flat Laplacian into first-order operators, the basic elliptic estimate reduces to the following Duhamel bounds.

\begin{lemma}[Flat mixed first-order estimate]
\label{lem:flat-mixed-elliptic}
Let \(f\) satisfy \(\mathcal D_r(f)<\infty\), and let \(F_0,F_1\in\mathcal Y_r\).
Consider
\begin{align*}
 (\partial_z-|D|)V&=W+F_0, &V|_{z=0}&=f,\\
 (\partial_z+|D|)W&=|D|F_1, &\lim_{z\to-\infty}W(z)&=0.
\end{align*}
Then
\begin{equation*}
 \|V\|_{\mathcal X_r} + \|W\|_{\mathcal Y_r} + \|W|_{z=0}\|_{\dot H^r} \leq C_r \left( \mathcal D_r(f) + \|F_0\|_{\mathcal Y_r} + \|F_1\|_{\mathcal Y_r} \right).
\end{equation*}
\end{lemma}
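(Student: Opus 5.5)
The plan is to solve the system explicitly by Duhamel formulas in the horizontal Fourier variable and then estimate each dyadic block separately, treating low and high frequencies with the two parts of the norms. First we solve the backward-decaying equation for \(W\):
\[
 W(z)=\int_{-\infty}^{z}e^{-(z-s)|D|}|D|F_1(s)\,ds .
\]
Then we solve the equation for \(V\) with the boundary datum at \(z=0\), using the decaying kernel \(e^{z|D|}\) for \(z<0\):
\[
 V(z)=e^{z|D|}f-\int_z^0 e^{(z-s)|D|}\bigl(W(s)+F_0(s)\bigr)\,ds .
\]
Since both equations commute with \(\dot\Delta_j\), every estimate reduces to a fixed dyadic block. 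On that block \(|D|\simeq2^j\) and the kernels satisfy \(\|e^{-t|D|}\dot\Delta_j\|_{L^2\to L^2}\lesssim e^{-c2^jt}\) for \(t\geq0\).

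\textbf{Block bounds.} For \(W_j\) the kernel is \(2^je^{-c2^j(z-s)}\mathbf 1_{s<z}\), which has \(L^1_s\) and \(L^1_z\) norms \(O(1)\). Young's inequality in \(z\) therefore gives
\[
 \|W_j\|_{L^q_zL^2_x}\lesssim\|F_{1,j}\|_{L^q_zL^2_x},\qquad q\in\{2,\infty\}.
\]
The trace \(W_j(0)=\int_{-\infty}^0 2^je^{c2^js}F_{1,j}(s)\,ds\) is bounded in \(L^2_x\) by \(2^{j/2}\|F_{1,j}\|_{L^2_zL^2_x}\) through Cauchy--Schwarz. Summing with weight \(2^{2rj}\) gives \(\|W|_{z=0}\|_{\dot H^r}\) controlled by the \(2^{(2r+1)j}L^2_{x,z}\) part of \(\mathcal Y_r\) when \(j>0\). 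For \(j\leq0\) we instead use the \(L^\infty_z\) bound \(\|W_j(0)\|\lesssim\|F_{1,j}\|_{L^\infty_zL^2}\) and square-sum. The low part of \(\mathcal Y_r\) only gives \(\|F_1\|_{L^\infty_zL^2_x}\), without dyadic square summability, so I will use the weight \(2^{2rj}\) with \(r>0\) together with the almost-orthogonal bound \(\sum_{j\le0}2^{2rj}\|F_{1,j}\|^2\lesssim\|F_1\|^2_{L^\infty_zL^2}\). This last step needs some care: it should follow from writing \(\|F_{1,j}(z)\|_{L^2}^2\leq\|F_1(z)\|_{L^2}^2\) for each \(z\) and summing the geometric weights.

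The gradient of \(V\) splits into three pieces. The homogeneous piece \(\nabla_{x,z}e^{z|D|}f\) has blocks of size \(2^je^{c2^jz}\|f_j\|\). Hence its \(L^\infty_zL^2\) norm is controlled by \(\|f\|_{\dot H^1}\), its \(L^2_{x,z}\) norm by \(\|f\|_{\dot H^{1/2}}\), and its high-frequency weighted parts by \(\|f\|_{\dot H^{r+1}}\). This accounts for the three components of \(\mathcal D_r\). The Duhamel piece involves \(\nabla_x\) and \(\partial_z\) applied to \(\int_z^0e^{(z-s)|D|}G\,ds\). For \(\partial_z\) we use the equation, \(\partial_zV=|D|V+W+F_0\). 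Each of these pieces has a kernel \(2^je^{-c2^j(s-z)}\) of \(L^1\) size \(O(1)\). Applying Young's inequality blockwise again yields \(L^q_z\) bounds by \(\|W_j\|+\|F_{0,j}\|\), and we combine these with the \(W\)-bounds.

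\textbf{Main obstacle.} The main obstacle is the low-frequency, unweighted components \(\|\nabla V\|_{L^\infty_zL^2_x}\) and \(\|\nabla V\|_{L^2_{x,z}}\). These are full norms, not square sums over blocks, so blockwise Young's inequality must be replaced by operator bounds for the whole semigroup. I would handle them through Plancherel in \(x\) at each fixed \(\xi\). The operator \(G\mapsto\int_z^0|\xi|e^{-|\xi|(s-z)}G(s)\,ds\) is bounded on \(L^q_z\) uniformly in \(\xi\), since its kernel has \(L^1\) norm \(\leq1\). For \(q=2\), Plancherel and Fubini then give the bound directly. For \(q=\infty\) we cannot exchange the orders of the norms, so I would instead invoke the Minkowski inequality,
\[
 \Bigl\|\int K(z,s)G(s)\,ds\Bigr\|_{L^2_x}\leq\int\|K\|_{op}\|G(s)\|_{L^2_x}\,ds .
\]
Here the operator norm of \(|D|e^{-(s-z)|D|}\) is not integrable near \(s=z\). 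To deal with this, I would split into frequencies \(|\xi|\le(s-z)^{-1}\) and the complementary region, or equivalently sum dyadically using \(\sup_j\) and almost orthogonality. The analogous issue arises for \(W\) in \(L^\infty_zL^2\), and I expect that to be the delicate point of the lemma.
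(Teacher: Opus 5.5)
Most of your plan matches the paper's proof. The shared steps are the two Duhamel formulas, the blockwise kernels $2^je^{-c2^js}$ with $O(1)$ $L^1$-norm, the Poisson bounds producing the three parts of $\mathcal D_r(f)$, Plancherel for the unweighted $L^2_{x,z}$ norms, and $\partial_zV=|D|V+W+F_0$. Your low-frequency trace argument via the weight $2^{2rj}$ is also valid.

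The gap is exactly the step you leave open: the unweighted norms $\|W\|_{L^\infty_zL^2_x}$ and $\|\nabla_{x,z}(V-P[f])\|_{L^\infty_zL^2_x}$, where $P[f](z)=e^{z|D|}f$. Both remedies you suggest (Minkowski with a frequency split at $|\xi|\sim(s-z)^{-1}$, or a $\sup_j$ argument) use only the $L^\infty_zL^2_x$ size of the source. No such argument can work, because the map $F\mapsto\int_{-\infty}^z e^{-(z-s)|D|}|D|F(s)\,ds$ is unbounded on $L^\infty_zL^2_x$, even on frequencies below $1$. To see this, let $A_j=\{2^j\le|\xi|<2^{j+1}\}$. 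For $J_1\le j\le J_2\le-1$, set $\widehat F(\xi,s)=|A_j|^{-1/2}\mathbf 1_{A_j}(\xi)$ for $s\in[-2^{1-j},-2^{-j})$. These $s$-intervals are disjoint, so $\sup_s\|F(s)\|_{L^2_x}\lesssim1$, and no positive-frequency block is involved. On $A_j$ one has $|\xi||s|\le4$, hence $|\widehat W(\xi,0)|\ge e^{-4}|A_j|^{-1/2}$, and therefore $\|W(0)\|_{L^2_x}^2\gtrsim J_2-J_1+1$. Among the components of $\mathcal Y_r$, only $\|F\|_{L^2_{x,z}}^2\simeq\sum_j2^{-j}$ blows up.

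The missing idea is that low-frequency $L^\infty_zL^2_x$ control must be paid for with the $L^2_{x,z}$ component of $\mathcal Y_r$. You remarked that the low part of $\mathcal Y_r$ only gives $\|F_1\|_{L^\infty_zL^2_x}$, but the $L^2_{x,z}$ component is also part of it, and it is square summable over dyadic blocks by Plancherel. The paper argues as follows.

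\begin{itemize}
\item By almost orthogonality at each fixed $z$, $\|W\|_{L^\infty_zL^2_x}^2\lesssim\sum_j\|\dot\Delta_jW\|_{L^\infty_zL^2_x}^2$.
\item For $j\le0$, use the Cauchy--Schwarz bound you already derived for the trace, but at every $z$: $\|\dot\Delta_jW\|_{L^\infty_zL^2_x}\lesssim2^{j/2}\|\dot\Delta_jF_1\|_{L^2_{x,z}}\le\|\dot\Delta_jF_1\|_{L^2_{x,z}}$. Its square sum is $\lesssim\|F_1\|_{L^2_{x,z}}^2$.
\item For $j>0$, use $\|\dot\Delta_jW\|_{L^\infty_zL^2_x}\lesssim\|\dot\Delta_jF_1\|_{L^\infty_zL^2_x}\le2^{rj}\|\dot\Delta_jF_1\|_{L^\infty_zL^2_x}$. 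This is square summable by the weighted positive-frequency component.
\end{itemize}

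With $U=V-P[f]$ and $S=W+F_0$, the operator $S\mapsto|D|U$ has the same dyadic kernels. The same low/high split therefore bounds $\||D|U\|_{L^\infty_zL^2_x}$ by $\|S\|_{\mathcal Y_r}$. Riesz transforms then give $\nabla_xU$, and $\partial_zU=|D|U+S$ completes the gradient bound.
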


\begin{proof}
For the Poisson extension \(P[f](z)=e^{z|D|}f\), Plancherel gives
\[
 \|\dot\Delta_j\nabla P[f]\|_{L^\infty_zL^2_x}
 \lesssim2^j\|\dot\Delta_jf\|_{L^2},\qquad
 \|\dot\Delta_j\nabla P[f]\|_{L^2_{x,z}}
 \lesssim2^{j/2}\|\dot\Delta_jf\|_{L^2}.
\]
Summing the unweighted and positive-frequency components yields
\[
 \|P[f]\|_{\mathcal X_r}\lesssim_r\mathcal D_r(f).
\]
Indeed, the unweighted square sums are controlled by
\(\|f\|_{\dot H^1}\) and \(\|f\|_{\dot H^{1/2}}\), while for \(j>0\)
both weighted components reduce to
\(2^{(r+1)j}\|\dot\Delta_jf\|_{L^2}\).

The second equation is represented by
\[
 W(z)=\int_{-\infty}^z e^{-(z-z')|D|}|D|F_1(z')\,dz'.
\]
At frequency \(2^j\), its \(z\)-kernel is bounded by
\(k_j(s)=C2^je^{-c2^js}\mathbf1_{\{s\geq0\}}\), with
\(\|k_j\|_{L^1}\lesssim1\) and \(\|k_j\|_{L^2}\lesssim2^{j/2}\).
Thus, for \(p=2,\infty\),
\[
 \|\dot\Delta_jW\|_{L^p_zL^2_x}\lesssim
 \|\dot\Delta_jF_1\|_{L^p_zL^2_x},\qquad
 \|\dot\Delta_jW\|_{L^\infty_zL^2_x}\lesssim
 2^{j/2}\|\dot\Delta_jF_1\|_{L^2_{x,z}}.
\]
For the unweighted \(L^\infty_zL^2_x\)-norm, almost orthogonality and a
split at frequency one give
\begin{align*}
 \|W\|_{L^\infty_zL^2_x}^2
 \lesssim{}&
 \sum_{j\leq0}2^j\|\dot\Delta_jF_1\|_{L^2_{x,z}}^2
 +\sum_{j>0}\|\dot\Delta_jF_1\|_{L^\infty_zL^2_x}^2
 \lesssim \|F_1\|_{\mathcal Y_r}^2.
\end{align*}
The first dyadic estimate with \(p=2\) controls the unweighted bulk
norm.  For \(j>0\), multiplying it by \(2^{rj}\) or
\(2^{(r+1/2)j}\) controls the two weighted components after taking the
corresponding \(\ell^2\)-norm.

The same \(L^2_z\)-to-boundary estimate gives
\[
 \|\dot\Delta_jW(0)\|_{L^2_x}\lesssim
 2^{j/2}\|\dot\Delta_jF_1\|_{L^2_{x,z}}.
\]
Consequently,
\begin{align*}
 \|W(0)\|_{\dot H^r}^2\lesssim \sum_{j\leq0}2^{(2r+1)j}\|\dot\Delta_jF_1\|_{L^2_{x,z}}^2
+\sum_{j>0}2^{(2r+1)j}\|\dot\Delta_jF_1\|_{L^2_{x,z}}^2\lesssim_r\|F_1\|_{\mathcal Y_r}^2,
\end{align*}
because \(2^{(2r+1)j}\leq1\) for \(j\leq0\).  We have proved
\[
 \|W\|_{\mathcal Y_r}+\|W(0)\|_{\dot H^r}
 \lesssim_r\|F_1\|_{\mathcal Y_r}.
\]

Set \(U=V-P[f]\) and \(S=W+F_0\).  Then
\[
 U(z)=\int_0^z e^{(z-z')|D|}S(z')\,dz',\qquad U(0)=0.
\]
The operator from \(S\) to \(|D|U\) has the same dyadic kernels as
above. Hence every component of the \(\mathcal X_r\)-norm of \(|D|U\)
is bounded by \(\|S\|_{\mathcal Y_r}\). The boundedness of the Riesz
transforms gives the corresponding estimate for \(\nabla_xU\), while
\(\partial_zU=|D|U+S\). Therefore
\[
 \|U\|_{\mathcal X_r}\lesssim_r\|S\|_{\mathcal Y_r}  \lesssim_r\|F_0\|_{\mathcal Y_r}+\|F_1\|_{\mathcal Y_r}.
\]
Combining this with the Poisson, \(W\), and trace bounds proves the lemma.
\end{proof}

To handle the variable coefficients, we also need a product estimate in
the mixed norms.

\begin{lemma}[Mixed tame product estimate]
\label{lem:mixed-tame-product}
Let \(r>1\).
Suppose that \(a\in L^\infty(\R^3_-)\) and
\[
 \|a\|_{\mathcal E_r^+} := \left( \sum_{j>0} 2^{2rj} \|\dot\Delta_ja\|_{L^\infty_zL^2_x}^2 \right)^{1/2} + \left( \sum_{j>0} 2^{(2r+1)j} \|\dot\Delta_ja\|_{L^2_{x,z}}^2 \right)^{1/2} <\infty.
\]
If \(B=\partial_\ell V\), where
\(\partial_\ell\in
\{\partial_{x_1},\partial_{x_2},\partial_z\}\), then
\begin{equation}
\label{eq:mixed-tame-product}
 \|aB\|_{\mathcal Y_r} \leq C_r \left( \|a\|_{L^\infty}\|V\|_{\mathcal X_r} + \|a\|_{\mathcal E_r^+} \|\nabla_{x,z}V\|_{L^\infty_{x,z}} \right).
\end{equation}
Moreover,
\begin{equation}
\label{eq:X-to-Linfty}
 \|\nabla_{x,z}V\|_{L^\infty_{x,z}} \leq C_r\|V\|_{\mathcal X_r}.
\end{equation}

If \(a\) is a finite product or a smooth composition of coefficients belonging to \(\mathcal E_r\), then \eqref{eq:mixed-tame-product} remains valid after estimating \(\|a\|_{L^\infty}+\|a\|_{\mathcal E_r^+}\) by Lemma~\ref{lem:coefficient-algebra}.
\end{lemma}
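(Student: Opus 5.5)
I would first prove the embedding \eqref{eq:X-to-Linfty}, since the product estimate needs it. I would then obtain \eqref{eq:mixed-tame-product} from a horizontal Bony decomposition of \(aB\), estimating each of the four components of the \(\mathcal Y_r\)-norm separately.

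For \eqref{eq:X-to-Linfty}, fix \(z\in J\) and split \(\nabla_{x,z}V(\cdot,z)=\sum_j\dot\Delta_j\nabla_{x,z}V(\cdot,z)\). Bernstein's inequality in two dimensions gives
\[
 \|\dot\Delta_j\nabla_{x,z}V(\cdot,z)\|_{L^\infty_x}\lesssim 2^j\|\dot\Delta_j\nabla_{x,z}V(\cdot,z)\|_{L^2_x}.
\]
For \(j>0\) I would bound this by \(2^{(1-r)j}\bigl(2^{rj}\|\dot\Delta_j\nabla V\|_{L^\infty_zL^2_x}\bigr)\). Since \(r>1\), the Cauchy--Schwarz inequality in \(j\) controls this part by the weighted \(L^\infty_z\) component of \(\|V\|_{\mathcal X_r}\).

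The low frequencies \(j\leq0\) are the delicate part: the unweighted \(L^\infty_zL^2_x\) norm alone gives \(\sum_{j\le0}2^j\|\dot\Delta_j\nabla V\|\), which is only square-summable-to-bounded after Cauchy--Schwarz with \(\sum_{j\leq0}2^{2j}<\infty\). This actually works, since
\[
 \sum_{j\leq0}2^j\|\dot\Delta_j\nabla V(z)\|_{L^2}\leq\Bigl(\sum_{j\le0}2^{2j}\Bigr)^{1/2}\|\nabla V\|_{L^\infty_zL^2_x},
\]
by almost orthogonality. So \eqref{eq:X-to-Linfty} follows uniformly in \(z\).

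For the product, I write \(aB=T_aB+T_Ba+R(a,B)\) horizontally at each fixed \(z\). I treat the unweighted norms \(L^\infty_zL^2_x\) and \(L^2_{x,z}\) of \(aB\) crudely by \(\|a\|_{L^\infty}\|B\|\), using \(\|B\|\leq\|\nabla V\|\) in these norms, which are part of \(\|V\|_{\mathcal X_r}\). For the weighted pieces with output frequency \(2^q\), \(q>0\), I estimate the three parts as follows.

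The paraproduct \(T_aB\) is bounded dyadically by \(\|a\|_{L^\infty}\|\dot\Delta_qB\|\) in either \(z\)-norm. The weighted sums over \(q>0\) are then exactly the high components of \(\|V\|_{\mathcal X_r}\).

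The paraproduct \(T_Ba\) is bounded by \(\|B\|_{L^\infty}\|\dot\Delta_qa\|\), with \(\|B\|_{L^\infty}\) controlled by \eqref{eq:X-to-Linfty}. Since \(q>0\), only \(\|a\|_{\mathcal E_r^+}\) enters. Near \(q\approx0\) I absorb the finitely many boundary indices using \(\|a\|_{L^\infty}\) and the unweighted norms.

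The resonant part \(R\) with output \(q>0\) involves inputs \(j\geq q-C\), so \(j>-C\). For \(j>0\) I use \(\|\dot\Delta_ja\|\,\|\widetilde\Delta_jB\|_{L^\infty}\), and the kernel \(2^{-r(j-q)}\) is summable by discrete Young. The few \(j\) near zero are handled again by \(L^\infty\) bounds.

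In the \(L^2_z\)-type norm I always place the \(L^2_z\) norm on the factor carrying the weight and \(L^\infty_{x,z}\) on the other. The weights \(2^{(r+1/2)q}\) then transfer just as in the \(L^\infty_z\) case.

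The main obstacle is that \(B\) and \(a\) are not controlled in unweighted low-frequency Sobolev norms. Hence every low-frequency input must be measured in \(L^\infty\). I would therefore check carefully that no term requires \(\|\dot\Delta_ja\|\) for \(j\leq0\) with a positive weight; the output restriction \(q>0\) together with the resonant constraint \(j\geq q-C\) ensures this.

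The final claim, for finite products or smooth compositions of coefficients, follows by applying the estimate with \(\|a\|_{L^\infty}+\|a\|_{\mathcal E_r^+}\leq\|a\|_{\mathcal E_r}\), bounded through Lemma~\ref{lem:coefficient-algebra}.
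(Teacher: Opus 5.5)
Your proposal is correct and follows essentially the same route as the paper. For \eqref{eq:X-to-Linfty} you use a Bernstein low/high split, and for \eqref{eq:mixed-tame-product} a horizontal Bony decomposition, with Hölder for the unweighted components, boundary indices near zero frequency absorbed by $\|a\|_{L^\infty}$ times the unweighted norms of $B$, and the $\ell^1$ kernel $2^{-s(j-q)}\mathbf 1_{\{j\ge q-C\}}$ for the resonant sum. One small point: the product estimate does not actually need \eqref{eq:X-to-Linfty}, since $\|B\|_{L^\infty}\le\|\nabla_{x,z}V\|_{L^\infty}$ already appears on the right-hand side, and invoking the embedding there would only give a weaker, non-tame bound.
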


\begin{proof}
For \(s>0\) and \(q\in\{2,\infty\}\), define the positive-frequency mixed norm
\[
 \|F\|_{\widetilde L^q_z\dot H^s_{x,+}} := \left( \sum_{j>0} 2^{2sj} \|\dot\Delta_jF\|_{L^q_zL^2_x}^2 \right)^{1/2}.
\]
The horizontal Bony decomposition gives the hybrid estimate
\begin{equation*}
 \|aB\|_{\widetilde L^q_z\dot H^s_{x,+}} \lesssim_s \|a\|_{L^\infty_{x,z}} \left( \|B\|_{L^q_zL^2_x} + \|B\|_{\widetilde L^q_z\dot H^s_{x,+}} \right)+ \|B\|_{L^\infty_{x,z}} \|a\|_{\widetilde L^q_z\dot H^s_{x,+}}.
\end{equation*}
Indeed, for \(T_aB=\sum_jS_{j-C}a\,\dot\Delta_jB\), a positive output
frequency \(2^m\) requires \(j=m+O(1)\). Thus
\(T_aB\) is controlled by \(\|a\|_{L^\infty}\) times the
positive-frequency norm of \(B\); if \(j\leq0\), only finitely many
positive output blocks occur and are controlled by the unweighted
norm of \(B\). The term \(T_Ba\) is treated similarly, placing \(B\)
in \(L^\infty\). For the resonant term, the input frequency \(2^j\) and output
frequency \(2^m\) satisfy \(m\leq j+C\). Hence
\begin{align*}
2^{sm}\|\dot\Delta_mR(a,B)\|_{L^q_zL^2_x} 
\lesssim 
\sum_{j\geq m-C}2^{-s(j-m)} 
\Bigl(
\|a\|_{L^\infty}2^{sj}\|\dot\Delta_jB\|_{L^q_zL^2_x}+  
\|B\|_{L^\infty}  
2^{sj}\|\dot\Delta_ja\|_{L^q_zL^2_x} 
\Bigr).
\end{align*}
The kernel \(2^{-s(j-m)}\mathbf 1_{\{j\geq m-C\}}\) belongs to
\(\ell^1\) because \(s>0\), so discrete Young's inequality gives the
claimed hybrid estimate.

For the two unweighted components of the \(\mathcal Y_r\)-norm, Hölder's inequality gives
\begin{equation*}
 \|aB\|_{L^\infty_zL^2_x} + \|aB\|_{L^2_{x,z}} \leq \|a\|_{L^\infty} \left( \|B\|_{L^\infty_zL^2_x} + \|B\|_{L^2_{x,z}} \right).
\end{equation*}
Applying the hybrid bound with \((s,q)=(r,\infty)\) and
\((r+\frac12,2)\) proves \eqref{eq:mixed-tame-product}.

For \eqref{eq:X-to-Linfty}, Bernstein's inequality at fixed \(z<0\) gives
\begin{align*}
 \|\nabla V(z)\|_{L^\infty_x}
 \lesssim{}&
 \left(\sum_{j\leq0}2^{2j}\right)^{1/2}\|\nabla V(z)\|_{L^2_x}+\left(\sum_{j>0}2^{2(1-r)j}\right)^{1/2}
 \left(\sum_{j>0}2^{2rj}\|\dot\Delta_j\nabla V(z)\|_{L^2_x}^2\right)^{1/2}.
\end{align*}
Taking the supremum in \(z\) proves \eqref{eq:X-to-Linfty}; the positive
sum is finite because \(r>1\).

The final assertion follows from
\(\|a\|_{\mathcal E_r^+}\leq\|a\|_{\mathcal E_r}\) and the product and
composition estimates of Lemma~\ref{lem:coefficient-algebra}.
\end{proof}

We can now prove the finite-order estimate for the Dirichlet-Neumann operator that will be used throughout the remainder of the paper.

\begin{proposition}[Finite-order cone-adapted DN estimate]
\label{prop:finite-order-DN}
Let \(3/2<r<2\).
There exists \(\delta_r>0\) such that the map
\[
 \eta\longmapsto G(\eta)
\]
is \(C^3\) from the open ball
\[
 \{\eta\in\mathfrak A_r:\mathcal A_r(\eta)<2\delta_r\}
\]
into \(\mathcal L(\mathfrak D_r,\dot H^r(\R^2))\).
More precisely, whenever \(\mathcal A_r(\eta)\leq\delta_r\) and
\(k=0,1,2,3\),
\begin{equation}
\label{eq:finite-order-DN}
 \left\| D_\eta^kG(\eta)[h_1,\ldots,h_k]f \right\|_{\dot H^r} \leq C_{r,k}\, \mathcal D_r(f) \prod_{\nu=1}^k\mathcal A_r(h_\nu).
\end{equation}
For \(k=0\), the product on the right-hand side is understood to be one, and the constants are uniform under the stated smallness condition.
\end{proposition}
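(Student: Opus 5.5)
We flatten the fluid domain with the regularized diffeomorphism of Lemma~\ref{lem:regularized-extension-two-level},
\[
 \Theta_\eta:(x,z)\longmapsto\bigl(x,z+H_\eta(x,z)\bigr),\qquad z<0 .
\]
By \eqref{eq:Eh-Linfty}, \(|\partial_zH_\eta|\leq C_\chi\|\nabla\eta\|_{L^\infty}\leq 1/2\) once \(\delta_r\) is small, so \(\Theta_\eta\) is a bi-Lipschitz map of \(\R^3_-\) onto \(\Omega\). Pulling back \(\phi_f\) to \(V=\phi_f\circ\Theta_\eta\) gives a divergence-form equation
\[
 \nabla_{x,z}\cdot\bigl(\mathbf A(E_\eta)\nabla_{x,z}V\bigr)=0,\qquad V|_{z=0}=f .
\]
Here \(\mathbf A=\Phi(E_\eta)\) is a fixed smooth matrix function of \(E_\eta=\nabla_{x,z}H_\eta\), with \(\Phi(0)=I\).

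The Dirichlet--Neumann operator is recovered as the conormal trace:
\[
 G(\eta)f=e_3\cdot\mathbf A(E_\eta)\nabla_{x,z}V|_{z=0}=\bigl(\partial_zV+\text{(coefficient)}\cdot\nabla_{x,z}V\bigr)|_{z=0}.
\]
By Lemma~\ref{lem:regularized-extension-two-level}, \(\|E_\eta\|_{\mathcal E_r}\lesssim\mathcal A_r(\eta)\). Lemma~\ref{lem:coefficient-algebra} then gives \(\|\mathbf A-I\|_{\mathcal E_r}\lesssim\mathcal A_r(\eta)\) and makes \(\eta\mapsto\mathbf A\) a \(C^4\) map into \(\mathcal E_r\). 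Since \(\eta\mapsto E_\eta\) is linear and bounded, every derivative \(D^k_\eta\mathbf A[h_1,\dots,h_k]\) has \(\mathcal E_r\)-norm at most \(C\prod_\nu\mathcal A_r(h_\nu)\).

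Next we rewrite the equation as the perturbed flat system of Lemma~\ref{lem:flat-mixed-elliptic}. We write \(\Delta_{x,z}V=-\nabla\cdot((\mathbf A-I)\nabla V)\) and factor
\[
 \Delta_{x,z}=(\partial_z+|D|)(\partial_z-|D|).
\]
We then set
\[
 W=(\partial_z-|D|)V-F_0,\qquad F_0=-e_3\cdot(\mathbf A-I)\nabla V,
\]
chosen so that the vertical divergence is absorbed and the remaining horizontal divergence \(\nabla_x\cdot Q\) is written as \(|D|F_1\) with \(F_1=-\mathcal R\cdot Q\), where \(\mathcal R\) denotes the Riesz transforms. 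The Riesz transforms are bounded on every component of \(\mathcal Y_r\).

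Lemma~\ref{lem:mixed-tame-product} together with \eqref{eq:X-to-Linfty} bounds
\[
 \|F_0\|_{\mathcal Y_r}+\|F_1\|_{\mathcal Y_r}\lesssim \|\mathbf A-I\|_{\mathcal E_r}\|V\|_{\mathcal X_r}\lesssim\mathcal A_r(\eta)\|V\|_{\mathcal X_r}.
\]
Lemma~\ref{lem:flat-mixed-elliptic} then yields \(\|V\|_{\mathcal X_r}\leq C(\mathcal D_r(f)+\delta_r\|V\|_{\mathcal X_r})\), which closes for small \(\delta_r\). To avoid an a priori bound we instead run this as a contraction for \(V\) in \(\mathcal X_r\), i.e.\ a Neumann series. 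The fixed point coincides with the energy extension because \(\nabla V\in L^2\) and energy solutions are unique. The boundary trace is
\[
 G(\eta)f=|D|f+W|_{z=0}+F_0|_{z=0}+(\text{coefficient terms}),
\]
and \(\|W(0)\|_{\dot H^r}\) is controlled by the lemma. The \(k=0\) bound follows once the trace of \(F_0\)-type terms is placed in \(\dot H^r\).

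\textbf{Main obstacle.} Controlling boundary traces of products \(a\,\partial V\) in \(\dot H^r\) at low frequency is the step I expect to be hardest. \(\mathcal Y_r\) only gives an unweighted \(L^\infty_zL^2_x\) bound, which does not control low frequencies in \(\dot H^r\), \(r>0\). I would first try to avoid taking traces of \(F_0\) directly: the conormal trace \(e_3\cdot\mathbf A\nabla V|_{z=0}\) can be written, via the equation, as \(-\int_{-\infty}^0\nabla_x\cdot(\ldots)\,dz\) plus \(W(0)\)-type Duhamel integrals. Each of these carries a factor \(|D|\) and is estimated through \(L^2_{x,z}\) with the gain \(2^{j/2}\), exactly as for \(W(0)\) in Lemma~\ref{lem:flat-mixed-elliptic}. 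The factor \(2^{(2r+1)j}\leq1\) for \(j\leq0\) then handles the low frequencies.

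For the derivatives, we differentiate the fixed-point identity in \(\eta\). Each \(D^k_\eta V[h_1,\dots,h_k]\) solves the same flat system with sources that are sums of products
\[
 D^{m}\mathbf A[h_{i_1},\dots,h_{i_m}]\,\nabla D^{k-m}_\eta V[\ldots],\qquad 1\leq m\leq k .
\]
Induction on \(k\leq3\), using Lemma~\ref{lem:mixed-tame-product} and the \(C^4\) Nemytskii bounds, gives
\[
 \|D^k_\eta V\|_{\mathcal X_r}\lesssim\mathcal D_r(f)\prod_\nu\mathcal A_r(h_\nu).
\]
Taking the conormal trace as above gives \eqref{eq:finite-order-DN}. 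Continuity of the top derivative in \(\eta\) follows from the local Lipschitz property of the Nemytskii derivatives, which yields the \(C^3\) regularity.
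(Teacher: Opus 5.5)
Your construction is the paper's proof. You use the same flattening $\Theta_\eta$ and the same factorization $\Delta_{x,z}=(\partial_z+|D|)(\partial_z-|D|)$. Your $F_0=-e_3\cdot(\mathbf A-I)\nabla V$ is exactly the paper's $Q_a(\eta,V)$, so your $W$ is its $w_\eta$. The Neumann series in $\mathcal X_r$, the identification with the energy extension, and the differentiation of the fixed-point identity also match. There is one harmless slip: $(\partial_z+|D|)W=-\nabla_x\cdot Q-|D|F_0$, so the source for Lemma~\ref{lem:flat-mixed-elliptic} is $F_1=-\mathcal R\cdot Q-F_0$, not $-\mathcal R\cdot Q$. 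This is precisely the paper's $Q_b-Q_a$.

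The gap is in the trace step you flag as the main obstacle. The ``coefficient terms'' in your boundary formula are exactly $e_3\cdot(\mathbf A-I)\nabla V|_{z=0}=-F_0|_{z=0}$. Indeed, by your choice of $F_0$, $W+|D|V=\partial_zV-F_0=e_3\cdot\mathbf A\nabla V$ holds identically in $\R^3_-$. Hence $G(\eta)f=|D|f+W(0)$, with no trace of $F_0$ at all; this is the paper's identity \eqref{eq:DN-trace-first-order}. The $k=0$ bound then follows directly from the $\|W(0)\|_{\dot H^r}$ estimate of Lemma~\ref{lem:flat-mixed-elliptic}. For $k\geq1$, $D^k_\eta G(\eta)f$ is again a boundary value of the same Duhamel integral with differentiated sources (the paper's $\mathcal Z_\eta$).

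Without this identity your $k=0$ step remains conditional, and the workaround you sketch would not close. The integral $-\int_{-\infty}^0\nabla_x\cdot(\cdots)\,dz$ carries no kernel $e^{z'|D|}$, hence no $2^{j/2}$ gain. Bounding it needs $L^1_z$ control, which neither $\mathcal X_r$ nor $\mathcal Y_r$ provides.

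Your diagnosis is also backwards. For $r>0$ and $j\leq0$ one has $2^{2rj}\leq1$, so $L^2_x$ bounds do dominate the low-frequency part of $\dot H^r$; this is the same fact you invoke for $W(0)$. The only real difficulty with $F_0(0)$ would be giving it meaning as a trace for rough coefficients, and the exact identity avoids that.
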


\begin{proof}
We first work with smooth finite-frequency data; the estimates are uniform
and therefore extend to the completed spaces. Set
\[
 \rho_\eta(x,z)=z+H_\eta(x,z),
 \qquad v_\eta(x,z)=\phi_f(x,\rho_\eta(x,z)).
\]
For \(\delta_r\) small enough, Lemma~\ref{lem:regularized-extension-two-level} gives
\[
\tfrac12\leq\partial_z\rho_\eta\leq\tfrac32, \qquad |H_\eta(x,z)-\eta(x)|
 \lesssim |z|\|\nabla\eta\|_{L^\infty}.
\]
Together with the boundary traces, these estimates show that \((x,z)\mapsto(x,\rho_\eta(x,z))\) is a bi-Lipschitz map from the lower
half-space onto \(\Omega_\eta\).

If we write \(E_\eta:=\nabla_{x,z}H_\eta\), a direct change-of-variables gives
\begin{equation*}
 \operatorname{div}_{x,z} \left( \mathsf A_\eta\nabla_{x,z}v_\eta \right)=0, \qquad
 \mathsf A_\eta  :=
 \begin{pmatrix}
 (1+\partial_zH_\eta)I_2 & -\nabla H_\eta\\[2mm]
 -(\nabla H_\eta)^T & \displaystyle \frac{1+|\nabla H_\eta|^2} {1+\partial_zH_\eta}
 \end{pmatrix}.
\end{equation*}
The matrix \(\mathsf A_\eta\) is uniformly elliptic.
Indeed, if \(a=\nabla H_\eta\), \(b=\partial_zH_\eta\), and \((\xi,\zeta)\in\R^2\times\R\), then
\[
 \mathsf A_\eta \binom{\xi}{\zeta} \cdot \binom{\xi}{\zeta} = \frac1{1+b} \left( |(1+b)\xi-a\zeta|^2+\zeta^2 \right),
\]
and \(a,b\) are uniformly small.

Expanding the divergence-form equation gives the equivalent system
\begin{equation*}
 \Delta_{x,z}v_\eta = \partial_zQ_a(\eta,v_\eta) + |D|Q_b(\eta,v_\eta), \qquad v_\eta|_{z=0}=f,
\end{equation*}
where
\begin{align*}
 Q_a(\eta,V) &:= \nabla H_\eta\cdot\nabla_xV
 -\frac{|\nabla H_\eta|^2-\partial_zH_\eta}
 {1+\partial_zH_\eta}\,\partial_zV,\\
 Q_b(\eta,V) &:= |D|^{-1}\operatorname{div}_x
 \left(  \nabla H_\eta\,\partial_zV
  -\partial_zH_\eta\,\nabla_xV \right).
\end{align*}

Define
\[
 w_\eta := (\partial_z-|D|)v_\eta - Q_a(\eta,v_\eta).
\]
Using the decomposition of the Laplacian
\[
 \Delta_{x,z} = (\partial_z+|D|)(\partial_z-|D|),
\]
we obtain
\begin{equation*}
 (\partial_z+|D|)w_\eta = |D| \left( Q_b(\eta,v_\eta)-Q_a(\eta,v_\eta) \right).
\end{equation*}

At \(z=0\), put \(a=\nabla_xH_\eta=\nabla\eta\) and
\(b=\partial_zH_\eta\).  The chain rule gives
\[
 G(\eta)f
 =-a\cdot\nabla_xv_\eta+\frac{1+|a|^2}{1+b}\partial_zv_\eta
 =\bigl(\partial_zv_\eta-Q_a(\eta,v_\eta)\bigr)\big|_{z=0}.
\]
Since \(v_\eta(0)=f\),
\begin{equation}
\label{eq:DN-trace-first-order}
 G(\eta)f=|D|f+w_\eta(0).
\end{equation}

Writing \(R_j:=|D|^{-1}\partial_{x_j}\), we have
\[
 Q_b(\eta,V) = \sum_{j=1}^2 R_j
 \left(  \partial_{x_j}H_\eta\,\partial_zV
  -\partial_zH_\eta\,\partial_{x_j}V
 \right).
\]
Thus \(Q_a\), and the expressions inside the Riesz transforms in
\(Q_b\), are sums of products \(C(E_\eta)\partial_\ell V\), where
\(C\) is smooth and \(C(0)=0\). 
Lemma~\ref{lem:coefficient-algebra} and
Lemma~\ref{lem:regularized-extension-two-level} give
\[
 \|C(E_\eta)\|_{L^\infty} + \|C(E_\eta)\|_{\mathcal E_r^+} \lesssim_r \mathcal A_r(\eta).
\]
Lemma~\ref{lem:mixed-tame-product} and the \(L^2\)-boundedness of the horizontal Riesz transforms therefore yield
\begin{equation}\label{eq:Q-base-bound}
 \|Q_a(\eta,V)\|_{\mathcal Y_r}
 + \|Q_b(\eta,V)\|_{\mathcal Y_r}
 \lesssim_r \mathcal A_r(\eta)\|V\|_{\mathcal X_r}.
\end{equation}

For \(V\in\mathcal X_r\), define
\begin{equation*}
 \mathcal W_\eta[V](z) := \int_{-\infty}^z e^{-(z-z')|D|}|D| \left( Q_b(\eta,V)-Q_a(\eta,V) \right)(z')\,dz'
\end{equation*}
and
\begin{equation*}
 \mathcal K_\eta V(z) := \int_0^z e^{(z-z')|D|} \left( \mathcal W_\eta[V](z') + Q_a(\eta,V)(z') \right)\,dz'.
\end{equation*}
Applying Lemma~\ref{lem:flat-mixed-elliptic} with zero boundary datum,
\(F_0=Q_a(\eta,V)\), and \(F_1=Q_b(\eta,V)-Q_a(\eta,V)\), and then using
\eqref{eq:Q-base-bound}, we obtain
\begin{equation} \label{eq:K-W-base-bound}
\begin{aligned}
 \|\mathcal K_\eta V\|_{\mathcal X_r} + \|\mathcal W_\eta[V]\|_{\mathcal Y_r} + \|\mathcal W_\eta[V](0)\|_{\dot H^r}\leq C_r\mathcal A_r(\eta) \|V\|_{\mathcal X_r}.
\end{aligned}
\end{equation}

The flattened extension satisfies
\begin{equation*}
 v_\eta = P[f]+\mathcal K_\eta v_\eta, \qquad P[f](z):=e^{z|D|}f.
\end{equation*}
Choose \(\delta_r\) so that \(\|\mathcal K_\eta\| \leq\frac12\), then $(I-\mathcal K_\eta)$ is invertible and
\[
 v_\eta = (I-\mathcal K_\eta)^{-1}P[f].
\]
The Poisson estimate and \eqref{eq:K-W-base-bound} give
\begin{equation*}
 \|v_\eta\|_{\mathcal X_r} + \|w_\eta\|_{\mathcal Y_r} + \|w_\eta(0)\|_{\dot H^r} \leq C_r\mathcal D_r(f).
\end{equation*}
The trace formula \eqref{eq:DN-trace-first-order} proves
\eqref{eq:finite-order-DN} for \(k=0\).

The map \(h\mapsto E_h\) is linear from \(\mathfrak A_r\) to \(\mathcal E_r\).  
Since \(1+\partial_zH_\eta\geq\frac12\), the coefficient algebra, composition, and mixed product estimates give, for
\(1\leq m\leq4\),
\begin{equation}
\label{eq:Q-shape-operator-bound}
 \sum_{i = a, b}\left\| D_\eta^mQ_i(\eta,V) [h_1,\ldots,h_m] \right\|_{\mathcal Y_r}\leq C_{r,m} \left( \prod_{\nu=1}^m \mathcal A_r(h_\nu) \right) \|V\|_{\mathcal X_r}.
\end{equation}
Each differentiated term consists of \(E_{h_1},\ldots,E_{h_m}\), a smooth bounded function of \(E_\eta\), and one component of \(\nabla V\);
the tame estimates place at most one factor in a homogeneous norm.

Define \(\mathcal Z_\eta V:=\mathcal W_\eta[V](0)\).
Because the two Duhamel operators in Lemma~\ref{lem:flat-mixed-elliptic} are bounded and independent of \(\eta\), differentiating them under the integral sign and using
\eqref{eq:Q-shape-operator-bound} shows that \(\mathcal K_\eta\) and \(\mathcal Z_\eta\) are \(C^3\) in operator norm.  
More precisely, for
\(1\leq m\leq3\),
\begin{equation}
\label{eq:K-shape-bound}
\begin{aligned}
 \left\| D_\eta^m\mathcal K_\eta [h_1,\ldots,h_m]V \right\|_{\mathcal X_r}+ \left\| D_\eta^m\mathcal Z_\eta [h_1,\ldots,h_m]V \right\|_{\dot H^r}\leq C_{r,m} \left( \prod_{\nu=1}^m \mathcal A_r(h_\nu) \right) \|V\|_{\mathcal X_r}.
\end{aligned}
\end{equation}
For example, \(D_\eta^m\mathcal W_\eta[V]\) is obtained by replacing \(Q_b-Q_a\) in its defining integral by \(D_\eta^m(Q_b-Q_a)[h_1,\ldots,h_m]V\), and the formula for
\(D_\eta^m\mathcal K_\eta\) has the analogous two terms.  
The estimate with \(m=4\) following from \eqref{eq:Q-shape-operator-bound} controls the third-order Taylor remainder and proves continuity of the third derivatives.

Set
\[
 \mathcal R_\eta:=(I-\mathcal K_\eta)^{-1}.
\]
By \eqref{eq:K-W-base-bound} and the choice of \(\delta_r\),
\(\|\mathcal K_\eta\|_{\mathcal L(\mathcal X_r)} \leq\frac12\).
Hence the Neumann series gives
\[
 \|\mathcal R_\eta\|_{\mathcal L(\mathcal X_r)}\leq2.
\]
Since \(\eta\mapsto\mathcal K_\eta\) is \(C^3\) in operator norm, so
is \(\eta\mapsto\mathcal R_\eta\). Differentiating
\((I-\mathcal K_\eta)\mathcal R_\eta=I\) gives
\[
 D_\eta\mathcal R_\eta[h]
 = \mathcal R_\eta
 \bigl(D_\eta\mathcal K_\eta[h]\bigr) \mathcal R_\eta.
\]
Repeated differentiation shows that \(D_\eta^k\mathcal R_\eta\), \(k\leq3\), is a finite sum of terms alternating factors \(\mathcal R_\eta\) with derivatives
\(D_\eta^m\mathcal K_\eta\), where the derivative directions are partitioned among the latter factors. 
Hence \eqref{eq:K-shape-bound}
gives
\[
 \left\|D_\eta^k\mathcal R_\eta[h_1,\ldots,h_k]\right\|_{\mathcal L(\mathcal X_r)}
 \leq C_{r,k}\prod_{\nu=1}^k\mathcal A_r(h_\nu).
\]
Since \(v_\eta=\mathcal R_\eta P[f]\), the Poisson estimate yields, for \(1\leq k\leq3\),
\begin{equation*}
 \left\| D_\eta^kv_\eta [h_1,\ldots,h_k] \right\|_{\mathcal X_r} \leq C_{r,k} \mathcal D_r(f) \prod_{\nu=1}^k \mathcal A_r(h_\nu).
\end{equation*}

The trace identity reads \(G(\eta)f=|D|f+\mathcal Z_\eta\mathcal R_\eta P[f]\).  Leibniz rule, \eqref{eq:K-shape-bound}, and the preceding resolvent bounds give
\[
 \left\| D_\eta^kG(\eta) [h_1,\ldots,h_k]f \right\|_{\dot H^r} \leq C_{r,k} \mathcal D_r(f) \prod_{\nu=1}^k \mathcal A_r(h_\nu), \qquad 1\leq k\leq3.
\]
The same operator formulas are meaningful directly for every
\(\eta\in\mathfrak A_r\) in the small ball and every
\(f\in\mathfrak D_r\); no density in the Lipschitz norm is required.
The coefficient estimates define \(Q_a,Q_b\), hence
\(\mathcal K_\eta\), on \(\mathcal X_r\), and the contraction formula
defines \(v_\eta\).  Substitution into the two first-order equations shows
in distributions that
\[
 \operatorname{div}_{x,z}(\mathsf A_\eta\nabla_{x,z}v_\eta)=0,
 \qquad v_\eta|_{z=0}=f.
\]
Moreover, \(\nabla v_\eta\in L^2(\R^3_-)\), so this is an energy
solution.  Composing with the inverse bi-Lipschitz flattening map gives an
energy harmonic extension in \(\Omega_\eta\), with boundary trace \(f\).
It is unique: the difference
\(u\in\dot H^1_0(\R^3_-)\) of two energy solutions satisfies
\[
 \int_{\R^3_-}\mathsf A_\eta\nabla u\cdot\nabla u=0.
\]
Ellipticity and the zero trace imply \(u=0\).  
Moreover,
\[
 (\mathsf A_\eta\nabla v_\eta)_3
 =\partial_zv_\eta-Q_a(\eta,v_\eta)=|D|v_\eta+w_\eta
 \longrightarrow |D|f+w_\eta(0)
\]
in boundary distributions.  Here the zero-boundary Duhamel term in
\(v_\eta-P[f]\) tends to zero, while the boundary estimate in
Lemma~\ref{lem:flat-mixed-elliptic} supplies the trace of \(w_\eta\).
Testing the weak equation against an extension \(\Psi\) of \(\psi\in C_c^\infty(\R^2)\) gives
\[
 \int_{\R^3_-}\mathsf A_\eta\nabla v_\eta\cdot\nabla\Psi =\langle |D|f+w_\eta(0),\psi\rangle.
\]
Thus \(|D|f+\mathcal Z_\eta v_\eta\) is the variational
Dirichlet--Neumann trace. This identifies the completed-space construction with the classical operator for smooth data and completes the proof of
\eqref{eq:finite-order-DN}.
\end{proof}

\begin{lemma}[Compatibility of the quadratic extension]
\label{lem:quadratic-compatibility}
Let \(3/2<r<2\), and let \(h_1,h_2\) satisfy
\[
 \mathfrak N_r(h_\nu)<\infty, \qquad \nu=1,2.
\]
Then the homogeneous \(\dot H^r\)-class \(\mathcal B(h_1,h_2)\) constructed in
Lemma~\ref{lem:quadratic-cancellation-revised} has the canonical tempered-distribution representative
\begin{equation}\label{eq:quadratic-compatibility}
 \mathcal B(h_1,h_2)
 = \frac12 \left( D_\eta G(0)[h_1](\Delta h_2)
  + D_\eta G(0)[h_2](\Delta h_1)
 \right).
\end{equation}
In particular,
\begin{equation}
\label{eq:quadratic-diagonal-compatibility}
 \mathcal B(h,h) = D_\eta G(0)[h](\Delta h)
\end{equation}
as an identity in \(\mathcal S'(\R^2)\).
\end{lemma}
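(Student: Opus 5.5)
The plan is to identify the right-hand side of \eqref{eq:quadratic-compatibility} as a tempered distribution for the rough inputs, then show it agrees with the $\dot H^r$-limit built in Lemma~\ref{lem:quadratic-cancellation-revised} modulo constants, and finally let that identification fix the representative. For $h$ with $\mathfrak N_r(h)<\infty$, the directional derivative $D_\eta G(0)[h]$ is provided by Proposition~\ref{prop:finite-order-DN} with $k=1$ and $\eta=0$. This requires $\mathcal A_r(h)\le\mathfrak N_r(h)<\infty$ and $\Delta h\in\mathfrak D_r$. The latter holds because $\|\Delta h\|_{\dot H^{1/2}}=\|h\|_{\dot H^{5/2}}$, $\|\Delta h\|_{\dot H^{r+1}}=\|h\|_{\dot H^{r+3}}$, and $\|\Delta h\|_{\dot H^1}$ is bounded by interpolation between these two, since $5/2<3<r+3$. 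Consequently $D_\eta G(0)[h_1]\Delta h_2$ is a well-defined element of $\dot H^r$, and it depends continuously and bilinearly on $(h_1,h_2)$ in the $\mathfrak A_r\times$(data) topology. The same Duhamel/trace construction, $G(\eta)f=|D|f+\mathcal Z_\eta\mathcal R_\eta P[f]$ from the proof of Proposition~\ref{prop:finite-order-DN}, furnishes an actual distribution: the boundary trace $w_\eta(0)$, differentiated in $\eta$ at $0$. That distribution is the representative we select.

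The comparison step uses the normalized truncations $h_\nu^{(N)}$ from the proof of Lemma~\ref{lem:quadratic-cancellation-revised}. These are smooth and finite-frequency, and differ from Schwartz functions only by constants, which annihilate both sides: $E_c=0$, and $\Delta c=0$. For such inputs, the first-variation formula \eqref{eq:first-shape-derivative-flat} holds classically. To justify it for finite-frequency but non-decaying data, I would differentiate the flattened system at $\eta=0$. This gives $D_\eta v_\eta[h]=\mathcal K'[h]P[f]$, and computing $\mathcal Z'[h]P[f]$ explicitly with $H_h=\chi_{-z}*h$ recovers $-|D|(h|D|f)-\operatorname{div}(h\nabla f)$. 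The result is independent of $\chi$ because the first variation of the Dirichlet--Neumann operator depends only on the boundary trace, as the uniqueness of energy extensions shows. Hence, for truncated inputs, the two sides of \eqref{eq:quadratic-compatibility} coincide as distributions.

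The passage to the limit is the main obstacle. On the left, $\mathcal B(h_1^{(N)},h_2^{(N)})\to\mathcal B(h_1,h_2)$ in $\dot H^r$ by construction. On the right, I need $h_\nu^{(N)}\to h_\nu$ in $\mathcal A_r$ and $\Delta h_\nu^{(N)}\to\Delta h_\nu$ in $\mathcal D_r$. The $\dot H^{r+1}$ and data norms converge by dominated convergence on Littlewood--Paley blocks. For $\|\nabla(h-h^{(N)})\|_{L^\infty}$, the Bernstein estimate works at high frequency using $r>1$, but at low frequency it needs $\dot H^{s}$ control with $s<2$, which $\mathfrak N_r$ does not directly provide. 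I expect this low-frequency gradient convergence to be the delicate point. The first remedy I would try is to avoid it: the estimate of Proposition~\ref{prop:finite-order-DN} for $k=1$ uses $\|\nabla h\|_{L^\infty}$ only through $\|E_h\|_{L^\infty}$ and the tame product. Weak-$*$ convergence of $\nabla h^{(N)}$, together with uniform bounds and strong convergence of the homogeneous parts, should therefore yield convergence of $D_\eta G(0)[h^{(N)}]\Delta h^{(N)}$ in $\mathcal S'$. This follows by testing the weak formulation against $\Psi$ as at the end of that proof, and the uniform bound \eqref{eq:finite-order-DN} gives a limit in $\dot H^r$. Since limits in $\mathcal S'$ and in $\dot H^r$ modulo constants are compatible, the two classes agree, and the right-hand side supplies the canonical tempered representative. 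The diagonal identity \eqref{eq:quadratic-diagonal-compatibility} then follows by setting $h_1=h_2=h$.
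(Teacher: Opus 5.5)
Your first two steps follow the paper: well-definedness through Proposition~\ref{prop:finite-order-DN} with \(f=\Delta h_\nu\), then identification of \(D_\eta G(0)[h]f=D_\eta\mathcal Z_0[h]P[f]\) with the symbol \(\zeta\cdot\beta-|\zeta||\beta|\) for inputs with finitely many Littlewood--Paley blocks. You only assert that the explicit computation ``recovers'' the formula. The paper's mechanism is that \(e^{z|\zeta|}|\zeta|\,\widehat{D_\eta(Q_b-Q_a)(0,P[f])[h]}\) equals \((\zeta\cdot\beta-|\zeta||\beta|)\,\partial_z\bigl[e^{z(|\zeta|+|\beta|)}\widehat\chi(-z\alpha)\bigr]\). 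The \(z\)-integral therefore collapses to \(\widehat\chi(0)=1\), which is also why \(\chi\) drops out. (Finite sums of blocks of \(\dot H^s\) functions are \(H^\infty\), not Schwartz, but that is harmless.)

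The genuine gap is in the limit passage, which you handle differently from the paper. You are right that \(\|\nabla(h-h^{(N)})\|_{L^\infty}\) need not tend to zero. However, the weak-\(*\) convergence \(\nabla h^{(N)}\to\nabla h\) you substitute is also false for the class \(\mathfrak N_r(h)<\infty\) in the lemma. For \(h=a\cdot x\) every \(\dot\Delta_jh\) vanishes, so \(h^{(N)}\) is constant. More generally, on compact sets \(\nabla(h-h^{(N)})\) is close to the constant \(c_N:=\nabla(h-h^{(N)})(0)\), essentially an average of \(\nabla h\) over a ball of radius \(\sim2^N\), and \(c_N\) need not converge.

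The missing idea is that \(D_\eta G(0)\) annihilates affine directions. By evenness of \(\chi\), \(E_{c\cdot x}=(c,0)\), and since \(\partial_zP[f]=|D|P[f]\), one gets \(D_\eta(Q_b-Q_a)(0,P[f])[c\cdot x]=0\). You may therefore replace \(h-h^{(N)}\) by \(k_N:=h-h^{(N)}-c_N\cdot x\). Then \(|E_{k_N}(x,z)|\lesssim(|x|+|z|)\|D^2(h-h^{(N)})\|_{L^\infty}\to0\) locally uniformly, using the \(D^2\) convergence proved in Lemma~\ref{lem:quadratic-cancellation-revised}, and \(E_{k_N}\) stays uniformly bounded. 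Take \(\Psi=P[\psi]\) in the differentiated weak formulation. Harmonicity of \(P[\psi]\) kills the \(D_\eta v_\eta\) term, leaving
\[
 \langle D_\eta G(0)[k]f,\psi\rangle=\int_{\R^3_-}D_\eta\mathsf A_0[k]\nabla P[f]\cdot\nabla P[\psi]\,dx\,dz,
\]
with \(D_\eta\mathsf A_0[k]\) linear in \(E_k\). Dominated convergence, together with \(\nabla P[\Delta h_2^{(N)}]\to\nabla P[\Delta h_2]\) in \(L^2\), then closes your argument.

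Once repaired, your route is a real alternative: a soft weak-formulation limit. The paper instead redoes the symbol identification on each localized paraproduct and resonant interaction and sums them with the bounds of Lemma~\ref{lem:quadratic-cancellation-revised}. Those bounds involve only \(\|D^2h\|_{L^\infty}\), so the paper never meets the gradient issue. For the inputs \(\varepsilon u_*+v\) used later, \(c_N\to0\) (by oddness of \(\nabla u_*\), and \(r<2\) for \(v\)). Your unrepaired argument would suffice there, but not for the lemma as stated.
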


\begin{proof}
We first verify that all the terms in
\eqref{eq:quadratic-compatibility} are well-defined. By the definition
of \(\mathfrak N_r\),
\[
 \mathcal A_r(h_\nu)
 \lesssim_r \mathfrak N_r(h_\nu).
\]
Moreover,
\[
 \mathcal D_r(\Delta h_\nu)
 = \|h_\nu\|_{\dot H^{5/2}} + \|h_\nu\|_{\dot H^3} + \|h_\nu\|_{\dot H^{r+3}} \lesssim_r \mathfrak N_r(h_\nu),
\]
where the \(\dot H^3\)-norm is obtained by homogeneous
interpolation between \(\dot H^{5/2}\) and \(\dot H^{r+3}\).
Thus Proposition~\ref{prop:finite-order-DN} defines
\[
 D_\eta G(0)[h_1](\Delta h_2),
 \qquad D_\eta G(0)[h_2](\Delta h_1)
\]
as elements of \(\dot H^r\) and as variational
Dirichlet--Neumann distributions.

We next identify their Littlewood--Paley blocks. Recall that
\[
 G(\eta)f = |D|f + \mathcal Z_\eta
 (I-\mathcal K_\eta)^{-1}P[f],
 \qquad P[f](z)=e^{z|D|}f.
\]
Since $\mathcal K_0=0$, and 
 $\mathcal Z_0=0$, differentiation in operator norm at the flat graph gives
\begin{equation} \label{eq:DG0-completed-formula}
 D_\eta G(0)[h]f = D_\eta\mathcal Z_0[h]P[f].
\end{equation}

The linearizations of \(Q_a\) and \(Q_b\) at the flat graph are
\begin{align}\label{eq:Qa-linearization-flat}
 D_\eta Q_a(0,V)[h]
 &= \nabla H_h\cdot\nabla V + \partial_zH_h\,\partial_zV,\\
\label{eq:Qb-linearization-flat}
 D_\eta Q_b(0,V)[h]
 &= |D|^{-1}\operatorname{div}
 \left( \nabla H_h\,\partial_zV  -  \partial_zH_h\,\nabla V
 \right).
\end{align}
Consequently, \eqref{eq:DG0-completed-formula} becomes
\begin{align}
\label{eq:DG0-Duhamel-formula}
 D_\eta G(0)[h]f = \int_{-\infty}^0 e^{z|D|}|D| D_\eta(Q_b-Q_a)(0,P[f])[h](z)\,dz.
\end{align}

We calculate the symbol of this expression. Let
\[
 \zeta=\alpha+\beta, \qquad Z:=|\zeta|, \qquad B:=|\beta|, \qquad \vartheta(z,\alpha) := \widehat\chi((-z)\alpha).
\]
Then
\[
 \widehat{H_h}(\alpha,z) = \vartheta(z,\alpha)\widehat h(\alpha), \qquad
 \widehat{P[f]}(\beta,z) = e^{zB}\widehat f(\beta).
\]
Substitution into
\eqref{eq:Qa-linearization-flat}--\eqref{eq:Qb-linearization-flat}
gives
\begin{align*}
 \widehat{D_\eta Q_a(0,P[f])[h]}(\zeta,z)
 &= \int_{\R^2} e^{zB} \left(
  -\alpha\cdot\beta\,\vartheta  +  B\,\partial_z\vartheta \right)
 \widehat h(\alpha)\widehat f(\beta)\,d\beta,\\
 Z\widehat{D_\eta Q_b(0,P[f])[h]}(\zeta,z)
 &= \int_{\R^2} e^{zB} \left( -B\zeta\cdot\alpha\,\vartheta + \zeta\cdot\beta\,\partial_z\vartheta \right)
 \widehat h(\alpha)\widehat f(\beta)\,d\beta.
\end{align*}
Using \(\alpha=\zeta-\beta\), we have
\[
 -B\zeta\cdot\alpha + Z\alpha\cdot\beta = (Z+B) \bigl(  \zeta\cdot\beta-ZB \bigr).
\]
It follows that
\begin{equation} \label{eq:DG0-total-z-derivative}
\begin{aligned}
&e^{zZ}Z\,
\widehat{D_\eta(Q_b-Q_a)(0,P[f])[h]}(\zeta,z)
 \\
 = \int_{\R^2}& \bigl(  \zeta\cdot\beta-ZB \bigr) \partial_z \left[ e^{z(Z+B)} \vartheta(z,\alpha) \right]
 \widehat h(\alpha)\widehat f(\beta)\,d\beta.
\end{aligned}
\end{equation}

For inputs with finitely many nonzero Littlewood--Paley blocks,
we may integrate \eqref{eq:DG0-total-z-derivative} over
\((-\infty,0)\). Since
\[
\vartheta(0,\alpha)=\widehat\chi(0)=1,
\]
and the lower boundary term vanishes, we obtain
\begin{equation}
\label{eq:flat-shape-symbol-completed}
 \widehat{D_\eta G(0)[h]f}(\zeta)
 = \int_{\R^2}
 \left(  \zeta\cdot\beta-|\zeta||\beta|
 \right) \widehat h(\zeta-\beta)
 \widehat f(\beta)\,d\beta.
\end{equation}
Equivalently,
\[
 D_\eta G(0)[h]f
 = -|D|(h|D|f) - \operatorname{div}(h\nabla f).
\]

Taking \(f=\Delta h_2\) in
\eqref{eq:flat-shape-symbol-completed} introduces the factor
\(-|\beta|^2\). Hence the bilinear symbol of
\(D_\eta G(0)[h_1](\Delta h_2)\) is
\[
 \left( |\alpha+\beta||\beta| -
  (\alpha+\beta)\cdot\beta \right)|\beta|^2,
\]
which is exactly the symbol defining \(\mathcal B_0(h_1,h_2)\). Symmetrizing in \(h_1,h_2\) gives
\eqref{eq:quadratic-compatibility} for inputs with finitely many
nonzero Littlewood--Paley blocks.

For general \(h_1,h_2\), we apply the preceding computation separately to every localized paraproduct and resonant interaction. 
In the low--high terms, the low-frequency blocks are summed before taking the \(L^\infty\)-estimate, exactly as in the proof of Lemma~\ref{lem:quadratic-cancellation-revised}. 
The estimates in that lemma show that the resulting output blocks converge in \(\dot H^r\) and coincide with the blocks defining the homogeneous class \(\mathcal B(h_1,h_2)\).

On the other hand, the completed Dirichlet--Neumann construction defines the right-hand side of \eqref{eq:quadratic-compatibility} as an actual tempered distribution. 
We take this distribution as the canonical representative of the corresponding homogeneous class. 
This proves
\eqref{eq:quadratic-compatibility} in \(\mathcal S'(\R^2)\). 
Setting \(h_1=h_2=h\) proves \eqref{eq:quadratic-diagonal-compatibility}.
\end{proof}

\begin{remark}
\label{rem:why-low-anchors}
The two low-regularity norms in \(\mathcal D_r(f)\) are essential.
Indeed, for the flat Poisson extension
\[
 P[f](z)=e^{z|D|}f,
\]
one has
\[
 \|\nabla_{x,z}P[f]\|_{L^2_{x,z}} \simeq \|f\|_{\dot H^{1/2}}, \qquad \|\nabla_{x,z}P[f]\|_{L^\infty_zL^2_x} \lesssim \|f\|_{\dot H^1}.
\]
Thus the \(\dot H^{1/2}\)-norm controls the bulk energy.
The \(\dot H^1\)-norm controls the low-frequency part of the
\(L^\infty_zL^2_x\)-norm.

The positive-frequency component of the \(\mathcal X_r\)-norm, together with the preceding \(L^\infty_zL^2_x\) control, yields
\[
 \|\nabla_{x,z}V\|_{L^\infty_{x,z}} \lesssim_r \|V\|_{\mathcal X_r},
\]
as stated in \eqref{eq:X-to-Linfty}.
The same conclusion for the flattened harmonic extension follows from the fixed-point estimate in \(\mathcal X_r\).
In particular, the argument never requires an \(L^\infty\)-to-\(L^\infty\) estimate for a singular integral.
\end{remark}

We record two consequences of Proposition~\ref{prop:finite-order-DN} in
the form needed for the nonlinear expansion.
\begin{lemma}[Cone-adapted DN estimate]
\label{lem:DN-one-input-revised}
Let \(3/2<r<2\), and let \(f\) satisfy \(\mathcal D_r(f)<\infty\).
If \(\mathcal A_r(\eta)\leq\delta_r\), then
\begin{equation*}
 \|G(\eta)f\|_{\dot H^r} \leq C_r\mathcal D_r(f).
\end{equation*}
Moreover, if \(\mathcal A_r(\eta_i)\leq\delta_r\) for \(i=1,2\), then
\begin{equation*}
 \|(G(\eta_1)-G(\eta_2))f\|_{\dot H^r} \leq C_r \mathcal A_r(\eta_1-\eta_2) \mathcal D_r(f).
\end{equation*}
\end{lemma}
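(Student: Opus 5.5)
Both estimates follow directly from Proposition~\ref{prop:finite-order-DN}, combined with the convexity of the small graph ball in \(\mathfrak A_r\).

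\emph{First estimate.} This is the case \(k=0\) of \eqref{eq:finite-order-DN}. Since \(\mathcal A_r(\eta)\leq\delta_r\), the proposition gives
\[
 \|G(\eta)f\|_{\dot H^r}\leq C_{r,0}\,\mathcal D_r(f).
\]
By density this extends from smooth finite-frequency data to every \(f\in\mathfrak D_r\), which is the completion on which the operator was defined.

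\emph{Second estimate.} We use the fundamental theorem of calculus along the segment \(\eta_\theta:=\eta_2+\theta(\eta_1-\eta_2)\), \(\theta\in[0,1]\).

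First, the path stays in the small ball. Since \(\mathcal A_r\) is a seminorm on \(\mathfrak A_r\) (a sum of seminorms), it is convex, so
\[
 \mathcal A_r(\eta_\theta)\leq(1-\theta)\mathcal A_r(\eta_2)+\theta\mathcal A_r(\eta_1)\leq\delta_r.
\]
Hence every \(\eta_\theta\) lies in the ball on which the uniform bounds of Proposition~\ref{prop:finite-order-DN} hold. It also lies in the open ball of radius \(2\delta_r\), where \(\eta\mapsto G(\eta)\) is \(C^3\), and in particular \(C^1\), into \(\mathcal L(\mathfrak D_r,\dot H^r)\).

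Next, the operator-valued map \(\theta\mapsto G(\eta_\theta)\) is \(C^1\) in operator norm, with derivative \(D_\eta G(\eta_\theta)[\eta_1-\eta_2]\). Therefore, as an identity in \(\dot H^r\),
\[
 \bigl(G(\eta_1)-G(\eta_2)\bigr)f=\int_0^1 D_\eta G(\eta_\theta)[\eta_1-\eta_2]f\,d\theta .
\]
The integral is a Bochner integral of a continuous \(\dot H^r\)-valued function.

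Finally, we apply \eqref{eq:finite-order-DN} with \(k=1\) and \(h_1=\eta_1-\eta_2\) uniformly in \(\theta\). Minkowski's inequality then gives
\[
 \|(G(\eta_1)-G(\eta_2))f\|_{\dot H^r}\leq C_{r,1}\,\mathcal A_r(\eta_1-\eta_2)\,\mathcal D_r(f).
\]
Taking \(C_r:=\max(C_{r,0},C_{r,1})\) completes the argument.

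\emph{Main technical point.} The only point needing care is the meaning of the difference \(\eta_1-\eta_2\), since elements of \(\mathfrak A_r\) are classes modulo constants. This causes no difficulty. Adding a constant to \(\eta\) vertically translates the domain and leaves \(G(\eta)\) unchanged, because the Dirichlet datum is pulled back through the graph. Equivalently, \(H_{\eta+c}=H_\eta+c\) leaves \(E_\eta\), and hence \(\mathsf A_\eta\), \(\mathcal K_\eta\) and \(\mathcal Z_\eta\), unchanged. So both sides are well defined on classes and the argument above applies to any representatives.
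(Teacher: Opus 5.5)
Your proposal is correct and is essentially the paper's proof: the first bound is the case \(k=0\) of Proposition~\ref{prop:finite-order-DN}, and the difference bound comes from writing \((G(\eta_1)-G(\eta_2))f=\int_0^1 D_\eta G(\eta_\theta)[\eta_1-\eta_2]f\,d\theta\) along the segment, which stays in the small ball by convexity of \(\mathcal A_r\), and then applying the case \(k=1\). Your extra remarks are accurate and only make explicit what the paper leaves implicit: operator-norm \(C^1\) regularity justifies the Bochner integral, and \(G(\eta)\) is invariant under \(\eta\mapsto\eta+c\) because \(H_{\eta+c}=H_\eta+c\).
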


\begin{proof}
The first estimate is the case \(k=0\) of
Proposition~\ref{prop:finite-order-DN}.  For
\(\eta_\theta=\eta_2+\theta(\eta_1-\eta_2)\), convexity of the norm keeps
the whole segment in the same ball, and
\[
 (G(\eta_1)-G(\eta_2))f
 =\int_0^1D_\eta G(\eta_\theta)[\eta_1-\eta_2]f\,d\theta.
\]
The case \(k=1\) of the proposition proves the difference estimate.
\end{proof}

For the Taylor remainder of \(G(h)\Delta h\), we also need estimates for
the second and third shape derivatives with conical inputs.

\begin{lemma}[Cone-specialized second and third variations]
\label{lem:DN-second-shape-revised}
Let \(3/2<r<2\), and suppose that
\(\mathcal A_r(\eta)\leq\delta_r\).
Then, for \(k=2,3\) and \(h_1,\ldots,h_{k+1}\) satisfying
\(\mathfrak N_r(h_\nu)<\infty\) for \(1\leq\nu\leq k+1\),
\begin{equation}
\label{eq:DN-k-shape-cone}
 \left\| D_\eta^kG(\eta) [h_1,\ldots,h_k]\Delta h_{k+1} \right\|_{\dot H^r} \leq C_r \prod_{\nu=1}^{k+1}\mathfrak N_r(h_\nu).
\end{equation}
Moreover, if \(\mathcal A_r(\eta_i)\leq\delta_r\) for \(i=1,2\) and \(\mathfrak N_r(\eta_1-\eta_2)<\infty\), then
\begin{align}
\label{eq:DN-second-shape-background-difference}
 \left\| \bigl( D_\eta^2G(\eta_1)-D_\eta^2G(\eta_2) \bigr) [h_1,h_2]\Delta h_3 \right\|_{\dot H^r}\leq C_r \mathfrak N_r(\eta_1-\eta_2) \prod_{\nu=1}^3\mathfrak N_r(h_\nu).
\end{align}
\end{lemma}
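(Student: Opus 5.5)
This lemma should be a direct specialization of Proposition~\ref{prop:finite-order-DN}: take $f=\Delta h_{k+1}$ and control $\mathcal D_r(\Delta h_{k+1})$ and each $\mathcal A_r(h_\nu)$ by $\mathfrak N_r(h_\nu)$. The plan has three steps: the norm comparisons, the $k$-th variation bound, and the background difference of the second variation.

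\emph{Norm comparisons.} From the definitions,
\[
 \mathcal A_r(h)=\|\nabla h\|_{L^\infty}+\|h\|_{\dot H^{r+1}}\leq\mathfrak N_r(h).
\]
As in the proof of Lemma~\ref{lem:quadratic-compatibility},
\[
 \mathcal D_r(\Delta h)=\|h\|_{\dot H^{5/2}}+\|h\|_{\dot H^3}+\|h\|_{\dot H^{r+3}}\lesssim_r\mathfrak N_r(h).
\]
Here $\|h\|_{\dot H^3}$ is obtained by homogeneous interpolation between $\dot H^{5/2}$ and $\dot H^{r+3}$; this is legitimate since $5/2<3<r+3$. We also need $\Delta h_{k+1}\in\mathfrak D_r$. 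This follows by approximating with the normalized Littlewood--Paley truncations used in Lemma~\ref{lem:quadratic-cancellation-revised}. Those truncations converge in the three homogeneous norms, and $\Delta$ annihilates the additive constant, so $\Delta h^{(N)}$ is Schwartz modulo constants.

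\emph{The bound \eqref{eq:DN-k-shape-cone}.} Apply \eqref{eq:finite-order-DN} with $k=2,3$, the base point $\eta$ with $\mathcal A_r(\eta)\leq\delta_r$, directions $h_1,\dots,h_k\in\mathfrak A_r$, and datum $f=\Delta h_{k+1}$. This gives
\[
 \bigl\|D^k_\eta G(\eta)[h_1,\dots,h_k]\Delta h_{k+1}\bigr\|_{\dot H^r}\leq C_{r,k}\,\mathcal D_r(\Delta h_{k+1})\prod_{\nu=1}^k\mathcal A_r(h_\nu)\lesssim_r\prod_{\nu=1}^{k+1}\mathfrak N_r(h_\nu).
\]

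\emph{The difference bound \eqref{eq:DN-second-shape-background-difference}.} Set $\eta_\theta=\eta_2+\theta(\eta_1-\eta_2)$ for $\theta\in[0,1]$. Since $\mathcal A_r$ is a norm, $\mathcal A_r(\eta_\theta)\leq\delta_r$ on the whole segment. The map $\eta\mapsto D^2_\eta G(\eta)$ is $C^1$ into bounded trilinear maps, because Proposition~\ref{prop:finite-order-DN} asserts $G$ is $C^3$ on the ball of radius $2\delta_r$. The fundamental theorem of calculus therefore gives
\[
 \bigl(D^2_\eta G(\eta_1)-D^2_\eta G(\eta_2)\bigr)[h_1,h_2]\Delta h_3=\int_0^1D^3_\eta G(\eta_\theta)[\eta_1-\eta_2,h_1,h_2]\Delta h_3\,d\theta,
\]
with the integral converging in $\dot H^r$. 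Applying the $k=3$ case just proved, with $h$'s replaced by $\eta_1-\eta_2,h_1,h_2,h_3$, yields the bound $C_r\mathfrak N_r(\eta_1-\eta_2)\prod_{\nu=1}^3\mathfrak N_r(h_\nu)$.

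\emph{Main obstacle.} The only delicate point is bookkeeping, not analysis. The direction $\eta_1-\eta_2$ and the $h_\nu$ are possibly unbounded Lipschitz functions, so each must be checked to lie in $\mathfrak A_r$ modulo constants. This holds since $\mathfrak N_r<\infty$ implies $\mathcal A_r<\infty$. Likewise, the datum must genuinely lie in the completed space $\mathfrak D_r$ rather than merely have finite norm, which the truncation argument above supplies. Once these memberships hold, everything reduces to the uniform constants of Proposition~\ref{prop:finite-order-DN}.
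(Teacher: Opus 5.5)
Your proof is correct and follows the paper's argument essentially verbatim: bound $\mathcal A_r(h)$ and $\mathcal D_r(\Delta h)$ by $\mathfrak N_r(h)$, using interpolation for the $\dot H^3$ term, then apply Proposition~\ref{prop:finite-order-DN} with $f=\Delta h_{k+1}$, and write the background difference as $\int_0^1 D^3_\eta G(\eta_\theta)[\eta_1-\eta_2,h_1,h_2]\Delta h_3\,d\theta$ along the convex segment. Your extra membership check for $\Delta h_{k+1}\in\mathfrak D_r$ is harmless (the paper simply identifies $\mathfrak D_r$ with the compatible intersection of the three homogeneous classes); note only that the frequency-localized truncations $\Delta h^{(N)}$ are band-limited $L^2$ functions rather than literally Schwartz, which still suffices for the density argument.
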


\begin{proof}
For every admissible \(h\), interpolation and the bound
\(\mathcal A_r(h)\lesssim_r\mathfrak N_r(h)\) give
\begin{equation} \label{eq:D-Delta-h-control}
 \mathcal D_r(\Delta h) = \|\Delta h\|_{\dot H^{1/2}} +\|\Delta h\|_{\dot H^1} +\|\Delta h\|_{\dot H^{r+1}}\leq C_r\mathfrak N_r(h). 
\end{equation}
Proposition~\ref{prop:finite-order-DN} with
\(f=\Delta h_{k+1}\), followed by these two bounds, proves
\eqref{eq:DN-k-shape-cone}.

For \(\eta_\theta=\eta_2+\theta(\eta_1-\eta_2)\),
\begin{align*}
 \bigl(D_\eta^2G(\eta_1)-D_\eta^2G(\eta_2)\bigr)[h_1,h_2]\Delta h_3
 =\int_0^1D_\eta^3G(\eta_\theta)
 [\eta_1-\eta_2,h_1,h_2]\Delta h_3\,d\theta.
\end{align*}
The case \(k=3\) of the proposition and
\eqref{eq:D-Delta-h-control} prove
\eqref{eq:DN-second-shape-background-difference}.
\end{proof}

Taylor's formula and the preceding variation bounds give the required
cubic remainder.

\begin{proposition}[Cone-adapted second-order DN remainder]
\label{prop:DN-second-remainder-revised}
Define the second-order DN remainder
\[
 \mathcal R_{\mathrm{DN},\geq2}(h) := \bigl( G(h)-|D|-D_\eta G(0)[h] \bigr)\Delta h.
\]
There exists \(\delta_r>0\) such that, if
\(\mathfrak N_r(h)\leq\delta_r\), then
\begin{equation*}
 \|\mathcal R_{\mathrm{DN},\geq2}(h)\|_{\dot H^r} \leq C_r\mathfrak N_r(h)^3.
\end{equation*}
Moreover, if \(\mathfrak N_r(h_i)\leq\delta_r\) for \(i=1,2\), then
\begin{align*}
 \left\| \mathcal R_{\mathrm{DN},\geq2}(h_1) - \mathcal R_{\mathrm{DN},\geq2}(h_2) \right\|_{\dot H^r}\leq C_r \bigl( \mathfrak N_r(h_1)+\mathfrak N_r(h_2) \bigr)^2 \mathfrak N_r(h_1-h_2).
\end{align*}
\end{proposition}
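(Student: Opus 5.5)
The plan is to apply Taylor's formula with integral remainder to the map \(\eta\mapsto G(\eta)\) at the flat graph, with the datum \(f=\Delta h\) frozen, and then feed in the bounds of Lemma~\ref{lem:DN-second-shape-revised}. First I will choose \(\delta_r\) so small that \(\mathfrak N_r(h)\leq\delta_r\) forces \(\mathcal A_r(h)\leq C_r\mathfrak N_r(h)\leq\delta_r'\), where \(\delta_r'\) is the threshold of Proposition~\ref{prop:finite-order-DN}. Since \(\mathcal A_r\) is a norm, the whole segment \(\theta h\), \(\theta\in[0,1]\), then lies in the ball where \(G\) is \(C^3\) into \(\mathcal L(\mathfrak D_r,\dot H^r)\). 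By \eqref{eq:D-Delta-h-control}, \(\Delta h\in\mathfrak D_r\). Using \(G(0)=|D|\), the second-order Taylor formula gives
\[
 \mathcal R_{\mathrm{DN},\geq2}(h)=\int_0^1(1-\theta)\,D_\eta^2G(\theta h)[h,h]\Delta h\,d\theta .
\]
This identity holds first in \(\mathcal L(\mathfrak D_r,\dot H^r)\) applied to \(\Delta h\). It is then compatible with the canonical representatives, using Lemma~\ref{lem:quadratic-compatibility} for the term \(D_\eta G(0)[h]\Delta h\).

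For the cubic bound, I apply \eqref{eq:DN-k-shape-cone} with \(k=2\), \(\eta=\theta h\) and \(h_1=h_2=h_3=h\). Integrating in \(\theta\) gives \(\|\mathcal R_{\mathrm{DN},\geq2}(h)\|_{\dot H^r}\leq C_r\mathfrak N_r(h)^3\).

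For the difference estimate, I write the remainder as \(T(h,h)\) with
\[
 T(\eta,f):=\bigl(G(\eta)-|D|-D_\eta G(0)[\eta]\bigr)f,
\]
and split
\[
 T(h_1,\Delta h_1)-T(h_2,\Delta h_2)=T(h_1,\Delta(h_1-h_2))+\bigl(T(h_1,\cdot)-T(h_2,\cdot)\bigr)\Delta h_2 .
\]
The first piece is handled by the same Taylor formula with datum \(\Delta(h_1-h_2)\). Proposition~\ref{prop:finite-order-DN} with \(k=2\) and \eqref{eq:D-Delta-h-control} bound it by \(C_r\mathfrak N_r(h_1)^2\mathfrak N_r(h_1-h_2)\).

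For the second piece, I use that \(\eta\mapsto T(\eta,\cdot)\) vanishes to second order at \(0\). Hence its derivative at \(\eta\) in direction \(k\) is \(D_\eta G(\eta)[k]-D_\eta G(0)[k]\), which equals \(\int_0^1 D_\eta^2G(s\eta)[\eta,k]\,ds\). Along the segment \(\eta_\theta=h_2+\theta(h_1-h_2)\), this gives
\[
 \bigl(T(h_1,\cdot)-T(h_2,\cdot)\bigr)\Delta h_2=\int_0^1\!\!\int_0^1 D_\eta^2G(s\eta_\theta)[\eta_\theta,h_1-h_2]\Delta h_2\,ds\,d\theta .
\]
Applying \eqref{eq:DN-k-shape-cone} with \(k=2\), together with \(\mathfrak N_r(\eta_\theta)\leq\mathfrak N_r(h_1)+\mathfrak N_r(h_2)\), bounds this by \(C_r(\mathfrak N_r(h_1)+\mathfrak N_r(h_2))^2\mathfrak N_r(h_1-h_2)\). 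Here I use that \(\mathfrak N_r\) is a seminorm, so sums of the \(h_i\) stay admissible and the small-ball condition is preserved under convex combinations and dilations by \(s\in[0,1]\).

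The main obstacle I expect is justification rather than estimation. The Taylor formulas are proved in operator norm on \(\mathfrak D_r\), with \(\eta\) ranging in \(\mathfrak A_r\). I must check that \(h\), which is unbounded and conical, defines an element of \(\mathfrak A_r\): \(\mathcal A_r\) only involves \(\nabla h\) and \(\dot H^{r+1}\), so this holds modulo constants. I must also check that \(G(h)\) is invariant under additive constants in \(h\); it is, since the harmonic extension simply shifts vertically, and indeed \(H_{h+c}=H_h+c\) leaves \(E_h\) unchanged. Finally, the identification of \(D_\eta G(0)[h]\Delta h\) with the quadratic form of Lemma~\ref{lem:quadratic-compatibility} fixes the representative. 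I would handle these via the completed-space remarks at the end of the proof of Proposition~\ref{prop:finite-order-DN}, which already avoid density in the Lipschitz norm.
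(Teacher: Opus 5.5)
Your proof is correct. The cubic bound is the paper's argument: Taylor's formula along $\theta h$, followed by \eqref{eq:DN-k-shape-cone} with $k=2$. The difference estimate, however, takes a genuinely different route.

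The paper subtracts the two Taylor integrands directly. This produces three terms in which one slot of $D_\eta^2G(\theta h_1)[\cdot,\cdot]\Delta(\cdot)$ carries $\delta h=h_1-h_2$, plus a background-difference term $\bigl(D_\eta^2G(\theta h_1)-D_\eta^2G(\theta h_2)\bigr)[h_2,h_2]\Delta h_2$. That last term is controlled through \eqref{eq:DN-second-shape-background-difference}, hence through the third shape derivative $D_\eta^3G$. It gives $C_r\theta N_\delta N_2^3$, which fits into $(N_1+N_2)^2N_\delta$ only after imposing $N_1+N_2\leq1$.

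You instead split off the datum difference $T(h_1,\Delta\delta h)$. You then apply the fundamental theorem of calculus along $\eta_\theta$ to the operator-valued map $\eta\mapsto T(\eta,\cdot)$. Its derivative $D_\eta G(\eta)[k]-D_\eta G(0)[k]$ is itself an integral of $D_\eta^2G$. Consequently:
\begin{itemize}
\item only the $k=2$ case of \eqref{eq:DN-k-shape-cone} and $C^2$ regularity of $G$ are used;
\item you get the exact homogeneity $\mathfrak N_r(\eta_\theta)\,\mathfrak N_r(\delta h)\,\mathfrak N_r(h_2)$ without an extra power of smallness.
\end{itemize}

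The paper's version is a symmetric four-term expansion that reuses its $C^3$ calculus. Yours is more economical: it makes the $k=3$ case and the background-difference estimate \eqref{eq:DN-second-shape-background-difference} unnecessary for this proposition. Your remarks on additive constants and representatives fit the paper's framework and spell out points its proof leaves implicit.
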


\begin{proof}
Taylor's formula with integral remainder gives
\begin{equation*}
 \mathcal R_{\mathrm{DN},\geq2}(h) = \int_0^1 (1-\theta) D_\eta^2G(\theta h)[h,h]\Delta h \,d\theta.
\end{equation*}
Since
\[
 \mathcal A_r(\theta h) \leq \theta\mathcal A_r(h) \leq
 \mathfrak N_r(h) \leq \delta_r,
 \qquad 0\leq\theta\leq1,
\]
the segment \(\{\theta h:0\leq\theta\leq1\}\) remains in the
small-background ball. Applying
\eqref{eq:DN-k-shape-cone} with \(k=2\) gives, uniformly in
\(\theta\),
\[
 \bigl\|
  D_\eta^2G(\theta h)[h,h]\Delta h
 \bigr\|_{\dot H^r} \leq C_r\mathfrak N_r(h)^3.
\]
Integration in \(\theta\) proves the cubic bound.

Set
\[
 \delta h:=h_1-h_2, \qquad N_i:=\mathfrak N_r(h_i), \qquad N_\delta:=\mathfrak N_r(\delta h).
\]
For fixed \(\theta\), write \(A_i(\theta):=D_\eta^2G(\theta h_i)\).
The difference of the two integrands is
\begin{align*}
 &A_1(\theta)[h_1,h_1]\Delta h_1 - A_2(\theta)[h_2,h_2]\Delta h_2\\
 =& A_1(\theta)[\delta h,h_1]\Delta h_1 + A_1(\theta)[h_2,\delta h]\Delta h_1+ A_1(\theta)[h_2,h_2]\Delta\delta h + \bigl(A_1(\theta)-A_2(\theta)\bigr) [h_2,h_2]\Delta h_2.
\end{align*}
The first three terms are controlled by \eqref{eq:DN-k-shape-cone}; the last is controlled by \eqref{eq:DN-second-shape-background-difference} and
is at most \(C_r\theta N_\delta N_2^3\).  
Taking \(\delta_r\) so that \(N_1+N_2\leq1\), all four terms are bounded by
\(C_r(N_1+N_2)^2N_\delta\).  
Integration in \(\theta\) proves the difference estimate.
\end{proof}

\section{Cubic remainders and the fixed-point construction}
\label{s:capillary-fixed-point}

Having isolated the favorable quadratic cancellation and established the required Dirichlet--Neumann estimates, it remains to control the cubic terms and close the construction by a contraction argument.

\subsection{The cubic capillary remainder}

We write the mean curvature operator as
\begin{equation*}
 \kappa(h) = -\Delta h+\mathcal Q_\kappa(h), \qquad
 \mathcal Q_\kappa(h) := \operatorname{div} \left[ \left( 1-\frac1{\sqrt{1+|\nabla h|^2}} \right)\nabla h \right].
\end{equation*}

We begin with a standard homogeneous composition estimate tailored to
cubic nonlinearities.

\begin{lemma}[Homogeneous cubic composition]
\label{lem:homogeneous-cubic-composition}
Let \(s>0\), and let \(\Phi\) be smooth on a neighborhood of the closed
unit ball in a finite-dimensional vector space, with
\begin{equation} \label{PhiZero}
 \Phi(0)=D\Phi(0)=D^2\Phi(0)=0.
\end{equation}
If \(p,p_1,p_2\in L^\infty\cap\dot H^s(\R^2)\) take values in that closed
unit ball, then
\begin{align*}
 \|\Phi(p)\|_{\dot H^s} &\leq C_{\Phi,s}\|p\|_{L^\infty}^2\|p\|_{\dot H^s},\\
 \|\Phi(p_1)-\Phi(p_2)\|_{\dot H^s} &\leq C_{\Phi,s}\Bigl[ (\|p_1\|_{L^\infty}+\|p_2\|_{L^\infty})^2 \|p_1-p_2\|_{\dot H^s}\\
 &\hspace{22mm}
 +(\|p_1\|_{L^\infty}+\|p_2\|_{L^\infty})
 (\|p_1\|_{\dot H^s}+\|p_2\|_{\dot H^s})
 \|p_1-p_2\|_{L^\infty}\Bigr].
\end{align*}
\end{lemma}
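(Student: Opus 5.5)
The plan is to reduce both estimates to the classical homogeneous Moser estimate. We write \(\Phi\) in factored form using the vanishing conditions \eqref{PhiZero}. Taylor's formula with integral remainder gives
\[
 \Phi(p)=\int_0^1\frac{(1-\theta)^2}{2}D^3\Phi(\theta p)[p,p,p]\,d\theta
 =\sum_{a,b,c}p_ap_bp_c\,\Psi_{abc}(p),
\]
where each \(\Psi_{abc}\) is smooth on a neighborhood of the closed unit ball. We then apply the standard homogeneous product estimate in \(\R^2\), valid for \(s>0\) by the Bony decomposition exactly as in Lemma~\ref{lem:coefficient-algebra}:
\[
 \|FG\|_{\dot H^s}\lesssim_s\|F\|_{L^\infty}\|G\|_{\dot H^s}+\|G\|_{L^\infty}\|F\|_{\dot H^s}.
\]
We also need the homogeneous Moser estimate
\[
 \|\Psi(p)-\Psi(0)\|_{\dot H^s}\leq C_{\Psi,s}\|p\|_{\dot H^s},
\]
with constant depending only on the \(C^k\) norms of \(\Psi\) on the unit ball, \(k\sim s+1\). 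This follows from Meyer's paralinearization, or from the chain rule and interpolation for integer \(s\). A short computation then bounds \(\|p_ap_bp_c\Psi_{abc}(p)\|_{\dot H^s}\) by \(\|p\|_{L^\infty}^2\|p\|_{\dot H^s}\). Distributing the \(\dot H^s\) norm onto one factor leaves at least two \(L^\infty\) factors of \(p\). When the norm falls on \(\Psi_{abc}(p)\), we bound it by \(C\|p\|_{\dot H^s}\) and the remaining \(|p|^3\le|p|^2\) uses the unit-ball hypothesis. The constant part \(\Psi_{abc}(0)\) contributes no derivative.

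For the difference estimate we write
\[
 \Phi(p_1)-\Phi(p_2)=\int_0^1 D\Phi(p_\tau)[p_1-p_2]\,d\tau,\qquad p_\tau=p_2+\tau(p_1-p_2),
\]
where \(p_\tau\) stays in the unit ball by convexity. Since \(D\Phi(0)=D^2\Phi(0)=0\), we factor \(D\Phi(q)=\sum_{a,b}q_aq_b\,\Theta_{ab}(q)\) with \(\Theta_{ab}\) smooth. Each integrand is then a product \(q_aq_b\Theta_{ab}(q)\,(p_1-p_2)_c\) with \(q=p_\tau\). The product estimate gives two types of terms. If the \(\dot H^s\) norm falls on \(p_1-p_2\), the term is bounded by \(\|q\|_{L^\infty}^2\|p_1-p_2\|_{\dot H^s}\). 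If it falls on \(q_a\), \(q_b\) or \(\Theta_{ab}(q)\), the term is bounded by \(\|q\|_{L^\infty}\|q\|_{\dot H^s}\|p_1-p_2\|_{L^\infty}\), after again using the bound \(|q|\le1\) on any extra factor. The norms \(\|q\|_{L^\infty}\) and \(\|q\|_{\dot H^s}\) are controlled by the corresponding sums for \(p_1\) and \(p_2\), which yields the stated estimate.

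The main obstacle is the Moser estimate for \(\Psi(p)-\Psi(0)\) in the homogeneous space \(\dot H^s\) with arbitrary \(s>0\), in particular for large non-integer \(s\). The difficulty is that \(p\) itself need not lie in \(L^2\). My first approach is the Littlewood--Paley telescoping
\[
 \Psi(S_{j+1}p)-\Psi(S_jp)=m_j\,\dot\Delta_jp,\qquad m_j=\int_0^1D\Psi(S_jp+t\dot\Delta_jp)\,dt,
\]
together with the bounds \(\|\nabla^k m_j\|_{L^\infty}\lesssim 2^{jk}\). These give \(\dot H^s\) control by the standard summation argument, with constants depending only on \(\|p\|_{L^\infty}\le1\).
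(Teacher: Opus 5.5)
Your proposal is correct and is essentially the paper's own argument. In both, you Taylor-factor $\Phi$ (and, for the difference, $D\Phi$) to expose the cubic, respectively quadratic, vanishing at the origin. You then combine the homogeneous Bony product estimate with a homogeneous Moser bound, placing the $\dot H^s$ norm on one factor at a time and absorbing surplus $L^\infty$ factors via $|p|\le1$.

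Your telescoping sketch of the Moser bound supplies a justification the paper leaves implicit. One point should be added there: $S_jp+t\dot\Delta_jp$ need not stay in the closed unit ball, so first replace $\Phi$ by a smooth, compactly supported extension that coincides with it on that ball. This does not change $\Phi(p)$, $\Phi(p_1)$ or $\Phi(p_2)$.
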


\begin{proof}
The homogeneous Bony decomposition gives, for \(s>0\),
\[
 \|abc\|_{\dot H^s}
 \lesssim_s
 \|a\|_{\dot H^s}\|b\|_{L^\infty}\|c\|_{L^\infty}
 +\|a\|_{L^\infty}\|b\|_{\dot H^s}\|c\|_{L^\infty}
 +\|a\|_{L^\infty}\|b\|_{L^\infty}\|c\|_{\dot H^s}.
\]
The same decomposition gives the homogeneous composition estimate
\(\|A(p)\|_{\dot H^s}\lesssim_{A,s,\|p\|_\infty}\|p\|_{\dot H^s}\)
for a smooth function \(A\).

By \eqref{PhiZero}, Taylor's formula gives
\[
 \Phi(p)=\frac12\int_0^1(1-\theta)^2 D^3\Phi(\theta p)[p,p,p]\,d\theta.
\]
Applying the preceding product and composition estimates, and using
\(\|p\|_{L^\infty}\leq1\), proves the first inequality. 
For the difference, use
\[
 \Phi(p_1)-\Phi(p_2) =\int_0^1D\Phi(p_2+\theta(p_1-p_2))[p_1-p_2]\,d\theta.
\]
Since \(D\Phi(q)\) vanishes quadratically at \(q=0\), the same tame product estimate, with the \(\dot H^s\)-derivative placed either on \(p_1-p_2\) or on one copy of \(q\), gives the second inequality.
\end{proof}

Applying this estimate to the curvature nonlinearity gives the bounds needed below.
\begin{lemma}[Cubic curvature estimate]
\label{lem:curvature-revised}
Let \(3/2<r<2\).
There exists \(\delta_r>0\) such that, if \(\mathfrak N_r(h)\leq\delta_r\), then
\begin{equation*}
\begin{aligned}
 \|\mathcal Q_\kappa(h)\|_{\dot H^{1/2}} + \|\mathcal Q_\kappa(h)\|_{\dot H^1} + \|\mathcal Q_\kappa(h)\|_{\dot H^{r+1}}+ \|\nabla\mathcal Q_\kappa(h)\|_{L^\infty} \leq C_r\mathfrak N_r(h)^3.
\end{aligned}
\end{equation*}
Moreover, if \(\mathfrak N_r(h_i)\leq\delta_r\) for \(i=1,2\), then
\begin{equation*}
\begin{aligned}
 &\|\mathcal Q_\kappa(h_1)-\mathcal Q_\kappa(h_2)\|_{\dot H^{1/2}} + \|\mathcal Q_\kappa(h_1)-\mathcal Q_\kappa(h_2)\|_{\dot H^1}+ \|\mathcal Q_\kappa(h_1)-\mathcal Q_\kappa(h_2)\|_{\dot H^{r+1}}\\
 &\quad+ \|\nabla(\mathcal Q_\kappa(h_1)-\mathcal Q_\kappa(h_2))\|_{L^\infty}\leq C_r \bigl( \mathfrak N_r(h_1)+\mathfrak N_r(h_2) \bigr)^2 \mathfrak N_r(h_1-h_2).
\end{aligned}
\end{equation*}
\end{lemma}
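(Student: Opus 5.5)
We write \(\mathcal Q_\kappa(h)=\operatorname{div}\Psi(\nabla h)\) with
\[
 \Psi(p):=\Bigl(1-\frac{1}{\sqrt{1+|p|^2}}\Bigr)p ,
\]
which is smooth near the closed unit ball and satisfies \(\Psi(0)=D\Psi(0)=D^2\Psi(0)=0\), since \(1-(1+|p|^2)^{-1/2}=\tfrac12|p|^2+O(|p|^4)\). We then set \(p:=\nabla h\). By definition of \(\mathfrak N_r\), \(\|p\|_{L^\infty}\le\mathfrak N_r(h)\le\delta_r\le1\), and we control \(\|p\|_{\dot H^{s}}=\|h\|_{\dot H^{s+1}}\) for the needed values of \(s\).

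For the Sobolev components, \(\|\mathcal Q_\kappa(h)\|_{\dot H^{\sigma}}\le\|\Psi(p)\|_{\dot H^{\sigma+1}}\), and we need \(\sigma\in\{1/2,1,r+1\}\), i.e.\ \(s\in\{3/2,2,r+2\}\). Lemma~\ref{lem:homogeneous-cubic-composition} gives \(\|\Psi(p)\|_{\dot H^s}\lesssim\|p\|_{L^\infty}^2\|h\|_{\dot H^{s+1}}\). We need \(\|h\|_{\dot H^{5/2}}\), \(\|h\|_{\dot H^{3}}\), and \(\|h\|_{\dot H^{r+3}}\). The first and last are in \(\mathfrak N_r\). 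For \(\dot H^3\), we interpolate between \(\dot H^{5/2}\) and \(\dot H^{r+3}\), exactly as in Lemma~\ref{lem:quadratic-compatibility}, since \(5/2<3<r+3\). Each term is then \(\lesssim\mathfrak N_r(h)^3\).

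The \(L^\infty\) term \(\nabla\mathcal Q_\kappa=\nabla\operatorname{div}\Psi(\nabla h)\) is where I expect the main obstacle. Expanding by the chain rule gives products \(D^2\Psi(p)[D^2h,D^2h]\), which are quadratic in \(p\) times two factors of \(D^2 h\), and \(D\Psi(p)[D^3h]\). The first is bounded by \(\|p\|_\infty\|D^2h\|_\infty^2\lesssim\mathfrak N_r^3\), using \(|D^2\Psi(p)|\lesssim|p|\). The second is bounded by \(\|p\|_\infty^2\|D^3h\|_{L^\infty}\). So we must show \(\|D^3h\|_{L^\infty}\lesssim\mathfrak N_r(h)\). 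We plan to use Bernstein with a low/high split, exactly as in Lemma~\ref{lem:Xr-cone-embedding-revised}. At low frequencies, \(\sum_{j\le0}2^{j}\,2^{5j/2}\|\dot\Delta_jh\|_{L^2}\) is controlled by \(\dot H^{5/2}\) and converges since \(2^{j}\) decays. At high frequencies, \(\sum_{j>0}2^{(4-r-3)j}\,2^{(r+3)j}\|\dot\Delta_jh\|_{L^2}\) converges because \(r>1\). This also handles representatives, since only derivatives of \(h\) appear.

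For the differences, we apply the second inequality of Lemma~\ref{lem:homogeneous-cubic-composition} with \(p_i=\nabla h_i\). The \(\dot H^s\) and \(L^\infty\) norms of \(p_1-p_2\) are bounded by \(\mathfrak N_r(h_1-h_2)\) through the same interpolation. For the \(L^\infty\) gradient difference, we expand the chain-rule terms by multilinearity and use the mean value theorem on \(D\Psi,D^2\Psi\). Each term keeps two small factors and one difference factor, controlled by the same Bernstein bound applied to \(D^3(h_1-h_2)\) and \(D^2(h_1-h_2)\). This yields \((\mathfrak N_r(h_1)+\mathfrak N_r(h_2))^2\,\mathfrak N_r(h_1-h_2)\).
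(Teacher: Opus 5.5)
Your proposal is correct and follows essentially the same route as the paper. Both write $\mathcal Q_\kappa(h)=\operatorname{div}\Psi(\nabla h)$, apply the homogeneous cubic composition lemma at $s=3/2,2,r+2$ (getting $\dot H^3$ by interpolation), and treat $\nabla\mathcal Q_\kappa$ by the chain rule together with a Bernstein bound for $\|D^3h\|_{L^\infty}$. The only difference is that you control the low frequencies of $D^3h$ through $\dot H^{5/2}$ rather than $\dot H^{r+2}$. That low-frequency weight should be $2^{3j/2}$, not $2^{j}$, but any positive power is summable over $j\le0$, so the slip is harmless.
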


\begin{proof}
If we write \(F(p):=(1-(1+|p|^2)^{-1/2})p\), then
\[
 \mathcal Q_\kappa(h)=\operatorname{div}F(\nabla h).
\]
Since \(F\) vanishes to third order at the origin, for \(|p|\leq1\) one has
\begin{equation*}
 |F(p)|\lesssim|p|^3, \qquad |DF(p)|\lesssim|p|^2, \qquad |D^2F(p)|\lesssim|p|, \qquad |D^3F(p)|\lesssim1.
\end{equation*}
Lemma~\ref{lem:homogeneous-cubic-composition} therefore gives, for
\(s>0\) and \(\|p_i\|_{L^\infty}\leq1\),
\begin{align}
\label{eq:cubic-Moser-combined}
 \|F(p)\|_{\dot H^s} &\leq C_s\|p\|_{L^\infty}^2\|p\|_{\dot H^s},
\notag\\[-1mm]
 \|F(p_1)-F(p_2)\|_{\dot H^s}
 \lesssim_s{}& (\|p_1\|_{L^\infty}+\|p_2\|_{L^\infty})^2
 \|p_1-p_2\|_{\dot H^s}\\
 &+(\|p_1\|_{L^\infty}+\|p_2\|_{L^\infty})
 (\|p_1\|_{\dot H^s}+\|p_2\|_{\dot H^s})
 \|p_1-p_2\|_{L^\infty}.
\notag
\end{align}

Bernstein's inequality, split at frequency one, gives
\begin{equation}
\label{eq:D3h-Linfty-revised}
 \|D^3h\|_{L^\infty} \lesssim_r \|h\|_{\dot H^{r+2}} + \|h\|_{\dot H^{r+3}}.
\end{equation}
Indeed, the low- and high-frequency summation weights are respectively
\(2^{(2-r)j}\) and \(2^{(1-r)j}\).
Interpolation also gives
\[
 \|h\|_{\dot H^3} \lesssim_r \|h\|_{\dot H^{5/2}} + \|h\|_{\dot H^{r+3}} \lesssim_r \mathfrak N_r(h).
\]
Applying \eqref{eq:cubic-Moser-combined} to \(p=\nabla h\) with
\(s=3/2,2,r+2\), and then taking one divergence, yields
\[
 \|\mathcal Q_\kappa(h)\|_{\dot H^{1/2}} + \|\mathcal Q_\kappa(h)\|_{\dot H^1} + \|\mathcal Q_\kappa(h)\|_{\dot H^{r+1}} \lesssim_r \mathfrak N_r(h)^3.
\]

Differentiating once more gives schematically
\[
 \nabla\mathcal Q_\kappa(h) = D^2F(\nabla h)[D^2h,D^2h] + DF(\nabla h)D^3h.
\]
Hence, by \eqref{eq:D3h-Linfty-revised},
\[
 \|\nabla\mathcal Q_\kappa(h)\|_{L^\infty} \lesssim \|\nabla h\|_{L^\infty} \|D^2h\|_{L^\infty}^2+ \|\nabla h\|_{L^\infty}^2 \|D^3h\|_{L^\infty}.
\]
This proves the first assertion.

Set \(p_i:=\nabla h_i\), \(\delta h:=h_1-h_2\), \(\delta p:=\nabla\delta h\), \(M:=\mathfrak N_r(h_1)+\mathfrak N_r(h_2)\), and \(M_\delta:=\mathfrak N_r(\delta h)\).
The difference estimate in \eqref{eq:cubic-Moser-combined}, at the same
three indices, gives
\[
 \|F(p_1)-F(p_2)\|_{\dot H^{3/2}} + \|F(p_1)-F(p_2)\|_{\dot H^2}+ \|F(p_1)-F(p_2)\|_{\dot H^{r+2}} \lesssim_r M^2M_\delta.
\]
After one divergence this proves
\[
 \|\mathcal Q_\kappa(h_1)-\mathcal Q_\kappa(h_2)\|_{\dot H^{1/2}} + \|\mathcal Q_\kappa(h_1)-\mathcal Q_\kappa(h_2)\|_{\dot H^1}+ \|\mathcal Q_\kappa(h_1)-\mathcal Q_\kappa(h_2)\|_{\dot H^{r+1}} \lesssim_r M^2M_\delta.
\]

Subtracting the two identities for
\(\nabla\mathcal Q_\kappa\) produces terms of the following forms:
\begin{align*}
 &[D^2F(p_1)-D^2F(p_2)][D^2h_1,D^2h_1],\qquad D^2F(p_2)[D^2\delta h,D^2h_1], \\
 &D^2F(p_2)[D^2h_2,D^2\delta h],\qquad [DF(p_1)-DF(p_2)]D^3h_1, \qquad DF(p_2)D^3\delta h.
\end{align*}
The pointwise bounds above and the mean-value theorem give
\[
 |D^2F(p_1)-D^2F(p_2)|\lesssim|\delta p|,
 \qquad
 |DF(p_1)-DF(p_2)|\lesssim(|p_1|+|p_2|)|\delta p|.
\]
Together with \eqref{eq:D3h-Linfty-revised}, each displayed term is
bounded in \(L^\infty\) by \(C_rM^2M_\delta\).  Thus
\[
 \|\nabla(\mathcal Q_\kappa(h_1)-\mathcal Q_\kappa(h_2))\|_{L^\infty} \lesssim_r M^2M_\delta,
\]
which proves the gradient difference estimate.
\end{proof}

We can now combine the curvature estimate with the second-order
Dirichlet--Neumann expansion.

\begin{proposition}[Cubic nonlinear remainder] \label{prop:capillary-remainder-revised}
Let \(3/2<r<2\).
There exists \(\delta_r>0\) such that, if
\(\mathfrak N_r(h)\leq\delta_r\), then
\begin{equation*}
 \mathscr F(h) = \mathcal B(h,h)+\mathscr R_{\geq3}(h), \qquad
 \|\mathscr R_{\geq3}(h)\|_{\dot H^r} \leq C_r\mathfrak N_r(h)^3.
\end{equation*}
Moreover, if \(\mathfrak N_r(h_i)\leq\delta_r\) for \(i=1,2\), then
\begin{equation*}
\begin{aligned}
 \|\mathscr R_{\geq3}(h_1)-\mathscr R_{\geq3}(h_2)\|_{\dot H^r}\leq C_r \bigl( \mathfrak N_r(h_1)+\mathfrak N_r(h_2) \bigr)^2 \mathfrak N_r(h_1-h_2).
\end{aligned}
\end{equation*}
\end{proposition}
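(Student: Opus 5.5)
We split \(\mathscr F(h)=|D|^3h-G(h)\kappa(h)\) algebraically into the quadratic form and explicit higher-order pieces, and then estimate each piece with the results already established. Since \(\kappa(h)=-\Delta h+\mathcal Q_\kappa(h)\) and \(|D|^3h=-|D|\Delta h\), we have
\[
 \mathscr F(h)=G(h)\Delta h-|D|\Delta h-G(h)\mathcal Q_\kappa(h).
\]
Using the definition of \(\mathcal R_{\mathrm{DN},\geq2}\) in Proposition~\ref{prop:DN-second-remainder-revised}, we write
\[
 G(h)\Delta h-|D|\Delta h=D_\eta G(0)[h]\Delta h+\mathcal R_{\mathrm{DN},\geq2}(h).
\]
By \eqref{eq:quadratic-diagonal-compatibility} in Lemma~\ref{lem:quadratic-compatibility}, the first term equals \(\mathcal B(h,h)\) as a tempered distribution. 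This suggests defining
\[
 \mathscr R_{\geq3}(h):=\mathcal R_{\mathrm{DN},\geq2}(h)-G(h)\mathcal Q_\kappa(h).
\]
One must check that every term is well defined in the completed sense. For this, \(\mathcal A_r(h)\lesssim\mathfrak N_r(h)\) places \(h\) in the small ball of Proposition~\ref{prop:finite-order-DN} once \(\delta_r\) is shrunk. In addition, \(\mathcal D_r(\Delta h)\lesssim\mathfrak N_r(h)\) by \eqref{eq:D-Delta-h-control}, and \(\mathcal D_r(\mathcal Q_\kappa(h))\lesssim\mathfrak N_r(h)^3\) by Lemma~\ref{lem:curvature-revised}.

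The cubic bound then follows. Proposition~\ref{prop:DN-second-remainder-revised} gives \(\|\mathcal R_{\mathrm{DN},\geq2}(h)\|_{\dot H^r}\lesssim\mathfrak N_r(h)^3\). For the curvature term, the case \(k=0\) of Lemma~\ref{lem:DN-one-input-revised} together with Lemma~\ref{lem:curvature-revised} gives
\[
 \|G(h)\mathcal Q_\kappa(h)\|_{\dot H^r}\le C_r\,\mathcal D_r(\mathcal Q_\kappa(h))\le C_r\mathfrak N_r(h)^3.
\]

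For the difference estimate, the DN remainder difference is exactly the second assertion of Proposition~\ref{prop:DN-second-remainder-revised}. The curvature contribution we split as
\[
 G(h_1)\mathcal Q_\kappa(h_1)-G(h_2)\mathcal Q_\kappa(h_2)
 =G(h_1)\bigl(\mathcal Q_\kappa(h_1)-\mathcal Q_\kappa(h_2)\bigr)
 +\bigl(G(h_1)-G(h_2)\bigr)\mathcal Q_\kappa(h_2).
\]
The first term is bounded by \(C_r(N_1+N_2)^2N_\delta\), using Lemma~\ref{lem:DN-one-input-revised} and the difference part of Lemma~\ref{lem:curvature-revised}. The second term is bounded by \(C_r\mathcal A_r(h_1-h_2)\mathcal D_r(\mathcal Q_\kappa(h_2))\lesssim N_\delta N_2^3\). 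Since \(N_1+N_2\le1\) after shrinking \(\delta_r\), this is at most \((N_1+N_2)^2N_\delta\).

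The main obstacle is bookkeeping rather than estimation: the decomposition must hold as an identity in \(\mathcal S'\) and not merely modulo constants. Each operator here is defined on completed spaces through the variational Dirichlet--Neumann construction, so we must confirm that the canonical representatives agree with one another. We will handle this by invoking the identification at the end of the proof of Proposition~\ref{prop:finite-order-DN}: there \(G(h)f\) is the variational trace, which is linear in \(f\), so \(G(h)\kappa(h)=-G(h)\Delta h+G(h)\mathcal Q_\kappa(h)\) holds exactly. We also use that \(D_\eta G(0)[h]\Delta h\) is the representative fixed in Lemma~\ref{lem:quadratic-compatibility}. Finally, \(|D|\Delta h\) is read via the Fourier side, with \(\mathcal D_r(\Delta h)<\infty\) ensuring that \(|D|\Delta h\) lies in \(\dot H^r\).
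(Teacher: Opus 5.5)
Your proposal is correct and matches the paper's proof essentially line by line. It uses the same definition \(\mathscr R_{\geq3}(h)=\mathcal R_{\mathrm{DN},\geq2}(h)-G(h)\mathcal Q_\kappa(h)\) obtained via Lemma~\ref{lem:quadratic-compatibility}, the same cubic bound from Proposition~\ref{prop:DN-second-remainder-revised} together with Lemmas~\ref{lem:DN-one-input-revised} and~\ref{lem:curvature-revised}, and the same split of the difference, with the \(N_\delta N_2^3\) term absorbed using \(N_1+N_2\leq1\) (your added remark on \(\mathcal S'\) representatives agrees with the paper's discussion at the start of the fixed-point formulation).
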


\begin{proof}
Choose \(\delta_r\) small enough that it does not exceed the smallness
thresholds in Proposition~\ref{prop:DN-second-remainder-revised},
Lemma~\ref{lem:DN-one-input-revised}, and
Lemma~\ref{lem:curvature-revised}.
Since \(\mathcal A_r(h)\leq\mathfrak N_r(h)\), the assumption \(\mathfrak N_r(h)\leq\delta_r\) ensures that all these estimates apply.

Using
\(\kappa(h)=-\Delta h+\mathcal Q_\kappa(h),
\)
expanding \(G(h)\) to first order, and using
\[
 |D|\Delta=-|D|^3,
 \qquad
 D_\eta G(0)[h](\Delta h)=\mathcal B(h,h)
\]
from Lemma~\ref{lem:quadratic-compatibility}, we obtain
\begin{equation}
\label{eq:Rge3-definition-revised}
\mathscr F(h) = \mathcal B(h,h)+\mathscr R_{\geq3}(h), \qquad 
 \mathscr R_{\geq3}(h) := \mathcal R_{\mathrm{DN},\geq2}(h) - G(h)\mathcal Q_\kappa(h).
\end{equation}

Proposition~\ref{prop:DN-second-remainder-revised} controls the first term cubically, while the one-input DN estimate and Lemma~\ref{lem:curvature-revised} give
\[
 \|G(h)\mathcal Q_\kappa(h)\|_{\dot H^r}
 \lesssim_r\mathcal D_r(\mathcal Q_\kappa(h))
 \lesssim_r\mathfrak N_r(h)^3.
\]
This proves the first estimate.

Set
\[
 \delta h:=h_1-h_2, \qquad M_i:=\mathfrak N_r(h_i), \qquad M_\delta:=\mathfrak N_r(\delta h), \qquad Q_i:=\mathcal Q_\kappa(h_i).
\]
From \eqref{eq:Rge3-definition-revised},
\begin{align*}
 \mathscr R_{\geq3}(h_1)-\mathscr R_{\geq3}(h_2) = \mathcal R_{\mathrm{DN},\geq2}(h_1) - \mathcal R_{\mathrm{DN},\geq2}(h_2)- G(h_1)(Q_1-Q_2) - \bigl(G(h_1)-G(h_2)\bigr)Q_2.
\end{align*}
The first three terms are bounded by \(C_r(M_1+M_2)^2M_\delta\) using the Dirichlet--Neumann remainder and curvature difference estimates. 
The Dirichlet--Neumann difference estimate bounds the last term by \(C_rM_\delta M_2^3\), which is no larger after taking \(M_1+M_2\leq1\).  
This proves the difference estimate.
\end{proof}

The preceding proposition applies uniformly to the conical background and corrections in \(X_r\).

\begin{corollary}[Expansion around the conical background]
\label{cor:cone-expansion-revised}
There exists \(\delta_{r,\sigma/\mu}>0\) such that, if
\(|\varepsilon|+\|v\|_{X_r}\leq\delta_{r,\sigma/\mu}\), then
\begin{equation*}
 \|\mathscr R_{\geq3}(\varepsilon u_*+v)\|_{\dot H^r} \leq C_{r,\sigma/\mu} \bigl( |\varepsilon|+\|v\|_{X_r} \bigr)^3.
\end{equation*}
Moreover, if \(|\varepsilon|+\|v_i\|_{X_r}\leq\delta_{r,\sigma/\mu}\) for \(i=1,2\), then
\begin{equation*}
\|\mathscr R_{\geq3}(\varepsilon u_*+v_1) - \mathscr R_{\geq3}(\varepsilon u_*+v_2)\|_{\dot H^r}\leq C_{r,\sigma/\mu} \bigl( |\varepsilon| +\|v_1\|_{X_r} +\|v_2\|_{X_r} \bigr)^2 \|v_1-v_2\|_{X_r}.
\end{equation*}
\end{corollary}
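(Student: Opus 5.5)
The plan is to reduce Corollary~\ref{cor:cone-expansion-revised} to Proposition~\ref{prop:capillary-remainder-revised}. The reduction uses the norm comparison of Lemma~\ref{lem:Xr-cone-embedding-revised}; no new analytic estimate is required.

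\textbf{Smallness threshold.} Set $h=\varepsilon u_*+v$. Lemma~\ref{lem:Xr-cone-embedding-revised} gives
\[
 \mathfrak N_r(h)\leq C_{r,\sigma/\mu}\bigl(|\varepsilon|+\|v\|_{X_r}\bigr),
\]
where the constant depends on $\sigma/\mu$ only through $\mathfrak N_r(u_*)<\infty$, which is supplied by Lemma~\ref{lem:regularized-cone-revised}. Let $\delta_r$ be the threshold of Proposition~\ref{prop:capillary-remainder-revised}. We then choose
\[
 \delta_{r,\sigma/\mu}:=\delta_r/C_{r,\sigma/\mu}.
\]
With this choice, $|\varepsilon|+\|v\|_{X_r}\leq\delta_{r,\sigma/\mu}$ forces $\mathfrak N_r(h)\leq\delta_r$. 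The cubic bound of the proposition then yields
\[
 \|\mathscr R_{\geq3}(h)\|_{\dot H^r}\leq C_r\,\mathfrak N_r(h)^3\leq C_rC_{r,\sigma/\mu}^3\bigl(|\varepsilon|+\|v\|_{X_r}\bigr)^3.
\]

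\textbf{Difference estimate.} Put $h_i=\varepsilon u_*+v_i$. Each $h_i$ lies in the small ball by the previous step. The key observation is that the conical part cancels:
\[
 h_1-h_2=v_1-v_2\in X_r .
\]
Hence the first part of Lemma~\ref{lem:Xr-cone-embedding-revised} gives
\[
 \mathfrak N_r(h_1-h_2)\leq C_r\|v_1-v_2\|_{X_r},
\]
with no $\varepsilon$-contribution. We also have
\[
 \mathfrak N_r(h_1)+\mathfrak N_r(h_2)\lesssim_{r,\sigma/\mu}2|\varepsilon|+\|v_1\|_{X_r}+\|v_2\|_{X_r}\leq 2\bigl(|\varepsilon|+\|v_1\|_{X_r}+\|v_2\|_{X_r}\bigr).
\]
Substituting these two bounds into the difference estimate of Proposition~\ref{prop:capillary-remainder-revised} gives the claimed Lipschitz bound.

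\textbf{Point needing care.} The one place that needs attention is representatives and polynomial ambiguity. The quantity $\mathscr R_{\geq3}(h)$ must be the same object for the fixed Lipschitz representative $h=\varepsilon u_*+v$ as the one treated in the proposition. This holds because $u_*$ is the normalized representative from \eqref{eq:regularized-cone-normalized-definition}, and $v$ is normalized. Moreover, $\mathfrak N_r$ and $G(h)$ depend on $h$ only modulo constants: $G$ is invariant under vertical translation of the graph, and every quantity entering $\mathfrak N_r$ involves at least one derivative. I expect no genuine obstacle here. The only substantive input is that $\mathfrak N_r(u_*)$ is finite, which is exactly why the constants acquire their dependence on $\sigma/\mu$.
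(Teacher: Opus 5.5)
Your proposal is correct and follows the paper's proof: both bound $\mathfrak N_r(\varepsilon u_*+v)$ and $\mathfrak N_r(v_1-v_2)$ with Lemma~\ref{lem:Xr-cone-embedding-revised}. Both then shrink the threshold so that $h_i=\varepsilon u_*+v_i$ lie in the small ball, and apply the two estimates of Proposition~\ref{prop:capillary-remainder-revised}, using that the conical part cancels in $h_1-h_2$.
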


\begin{proof}
Lemma~\ref{lem:Xr-cone-embedding-revised} gives
\begin{align*}
 \mathfrak N_r(\varepsilon u_*+v)\lesssim_{r,\sigma/\mu}
 |\varepsilon|+\|v\|_{X_r},\qquad \mathfrak N_r(v_1-v_2)\lesssim_r\|v_1-v_2\|_{X_r}.
\end{align*}
Choose the threshold so that these bounds lie in the small ball as in Proposition~\ref{prop:capillary-remainder-revised}.  
Its two estimates, applied to \(h_i=\varepsilon u_*+v_i\), give both conclusions.
\end{proof}

\subsection{Fixed-point formulation}

We formulate the contraction directly for the correction \(v\in X_r\).
This avoids introducing an auxiliary inverse-Laplacian variable and then recovering the affine component of \(v\).

Lemma~\ref{lem:quadratic-compatibility} identifies the homogeneous
class \(\mathcal B(h,h)\) with the actual distribution
\(
 \mathcal B(h,h) = D_\eta G(0)[h](\Delta h).
\)
Moreover, Proposition~\ref{prop:capillary-remainder-revised} defines \(\mathscr R_{\geq3}(h)\) through the variational Dirichlet--Neumann construction, and hence also as an actual tempered distribution. 
Both quantities are unchanged when \(h\) is replaced by \(h+c\): vertical translation leaves the graph Dirichlet--Neumann operator unchanged in horizontal coordinates,
while all the remaining terms involve derivatives of \(h\).
Consequently,
\(
 \mathcal B(h,h)+\mathscr R_{\geq3}(h)
\)
is a well-defined tempered distribution, not merely a homogeneous
Sobolev class.
The observation following Proposition~\ref{prop:linear-inverse-Sobolev} shows that
\(\mathcal L^{-1}\) is well-defined on classes modulo constants and that its physical formula selects an actual representative when the datum is an actual distribution. 
Consequently, the map \eqref{eq:Phi-epsilon-definition-revised} below is well-defined on the quotient space \(X_r\), and the displayed formula selects the representative used in the profile equation.

For \(v\in X_r\), set \(h_v:=\varepsilon u_*+v\) and define
\begin{equation}
\label{eq:Phi-epsilon-definition-revised}
 \Phi_\varepsilon(v) := \frac{3\sigma}{\mu}\mathcal L^{-1} \left[ \mathcal B(h_v,h_v) + \mathscr R_{\geq3}(h_v) \right].
\end{equation}
Here \(\mathcal L^{-1}\) is the canonical normalized inverse from
Proposition~\ref{prop:linear-inverse-Sobolev}.

The estimates established above now yield the contraction.

\begin{proposition}[Fixed-point construction]
\label{prop:fixed-point-construction-revised}
There exist constants
\[
 \varepsilon_{r,\sigma/\mu}>0, \qquad K=K_{r,\sigma/\mu}>0,
\]
such that, for every \(\varepsilon\) satisfying
\(|\varepsilon|\leq\varepsilon_{r,\sigma/\mu}\),
the map \(\Phi_\varepsilon\) defined by \eqref{eq:Phi-epsilon-definition-revised} is well-defined on the closed ball
\[
 \mathbb B_\varepsilon^X := \left\{ v\in X_r: \|v\|_{X_r}\leq K\varepsilon^2 \right\}.
\]
It maps \(\mathbb B_\varepsilon^X\) into itself and is a contraction there.
Consequently, there exists a unique
\(v_\varepsilon\in\mathbb B_\varepsilon^X\) such that
\[
 v_\varepsilon=\Phi_\varepsilon(v_\varepsilon).
\]

If \(U_\varepsilon:=\varepsilon u_*+v_\varepsilon\), then \(U_\varepsilon\) solves the profile equation \eqref{AbstractEqn}, and
\begin{equation}
\label{eq:v-fixed-point-bound}
 \|v_\varepsilon\|_{X_r} \leq C_{r,\sigma/\mu}\varepsilon^2.
\end{equation}
\end{proposition}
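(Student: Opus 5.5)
The argument is the standard Banach contraction on $\mathbb B_\varepsilon^X$, with all analytic input taken from earlier results. Throughout, set $h_v=\varepsilon u_*+v$.

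\textbf{Step 1: a bound for the nonlinear term.} First we bound the datum $F(v):=\mathcal B(h_v,h_v)+\mathscr R_{\geq3}(h_v)$ in $\dot H^r$.

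For the quadratic part we use \eqref{eq:B-estimate-revised} together with Lemma~\ref{lem:Xr-cone-embedding-revised}. These give
\[
\|\mathcal B(h_v,h_v)\|_{\dot H^r}\lesssim \|D^2h_v\|_{L^\infty}\|h_v\|_{\dot H^{r+2}}\lesssim \mathfrak N_r(h_v)^2\lesssim_{r,\sigma/\mu}(|\varepsilon|+\|v\|_{X_r})^2 .
\]
The cubic part is controlled by Corollary~\ref{cor:cone-expansion-revised}. On $\mathbb B_\varepsilon^X$ we have $\|v\|_{X_r}\le K\varepsilon^2\le|\varepsilon|$ once $K|\varepsilon|\le1$, so $\|F(v)\|_{\dot H^r}\le C_1\varepsilon^2$, where $C_1$ is independent of $K$.

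Proposition~\ref{prop:linear-inverse-Sobolev} then gives $\|\Phi_\varepsilon(v)\|_{X_r}\le \frac{3\sigma}{\mu}C_{r,\sigma/\mu}C_1\varepsilon^2$. We choose $K:=\frac{3\sigma}{\mu}C_{r,\sigma/\mu}C_1$, and then take $\varepsilon_{r,\sigma/\mu}$ small enough that $K\varepsilon^2+|\varepsilon|\le\delta_{r,\sigma/\mu}$ and $K|\varepsilon|\le1$. With these choices $\Phi_\varepsilon$ maps the ball into itself.

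Well-definedness also requires that the datum be a genuine distribution and that the output be independent of representatives. Both points are supplied by the discussion preceding \eqref{eq:Phi-epsilon-definition-revised}: Lemma~\ref{lem:quadratic-compatibility} for the quadratic term, and the remark that $\mathcal L^{-1}(F+c)=\mathcal L^{-1}F+c$ for the choice of representative.

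\textbf{Step 2: contraction.} For $v_1,v_2$ in the ball, bilinearity and symmetry of $\mathcal B$ give
\[
\mathcal B(h_1,h_1)-\mathcal B(h_2,h_2)=\mathcal B(v_1-v_2,h_1+h_2).
\]
By \eqref{eq:B-estimate-revised}, this is bounded in $\dot H^r$ by
\[
\mathfrak N_r(v_1-v_2)\,\bigl(\mathfrak N_r(h_1)+\mathfrak N_r(h_2)\bigr)\lesssim |\varepsilon|\,\|v_1-v_2\|_{X_r}.
\]
Here we use that $\mathcal B$ only requires the quantities $D^2$ in $L^\infty$ and the $\dot H^{5/2}\cap\dot H^{r+2}$ norms, all of which are controlled by $\mathfrak N_r$. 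The cubic difference is $O(\varepsilon^2)\|v_1-v_2\|_{X_r}$ by the second estimate of Corollary~\ref{cor:cone-expansion-revised}.

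Applying \eqref{eq:linear-inverse-Sobolev} gives $\|\Phi_\varepsilon(v_1)-\Phi_\varepsilon(v_2)\|_{X_r}\le C|\varepsilon|\,\|v_1-v_2\|_{X_r}$. After shrinking $\varepsilon_{r,\sigma/\mu}$ further, the factor $C|\varepsilon|$ is at most $\frac12$. Since $X_r$ is a Banach space modulo constants, the ball $\mathbb B_\varepsilon^X$ is complete. The Banach fixed-point theorem therefore produces a unique fixed point $v_\varepsilon$ satisfying \eqref{eq:v-fixed-point-bound}.

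\textbf{Step 3: the profile equation.} It remains to check that $U_\varepsilon$ solves \eqref{AbstractEqn}, and this is the main point requiring care. By Proposition~\ref{prop:linear-inverse-Sobolev}, the identity $\mathcal L\mathcal L^{-1}F=F$ holds in $\mathcal S'$ for normalized data, so
\[
\mathcal Lv_\varepsilon=\frac{3\sigma}{\mu}\bigl[\mathcal B(U_\varepsilon,U_\varepsilon)+\mathscr R_{\geq3}(U_\varepsilon)\bigr].
\]
Proposition~\ref{prop:capillary-remainder-revised} identifies the bracket with $\mathscr F(U_\varepsilon)$. Lemma~\ref{lem:regularized-cone-kernel} gives $\mathcal L(\varepsilon u_*)=0$. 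Adding these yields $\mathcal LU_\varepsilon=\frac{3\sigma}{\mu}\mathscr F(U_\varepsilon)$ in $\mathcal S'$.

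The one subtle point is consistency of representatives. The constant selected by the physical formula \eqref{eq:linear-inverse-physical-formula} must be compatible with the action of $\mathcal L$ on constants, namely $\mathcal Lc=c$. The issue is that $\mathscr F(U_\varepsilon)$, computed from the actual distribution $G(U_\varepsilon)\kappa(U_\varepsilon)$ and the genuine function $|D|^3U_\varepsilon$, agrees with the $\dot H^r$ class including its constant. This holds because every term is defined as an actual tempered distribution, via the variational DN trace and the compatibility lemma, and that distribution is exactly the one inverted.
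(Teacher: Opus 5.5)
Your proposal is correct and follows essentially the same route as the paper. You obtain the self-map bound from \eqref{eq:B-estimate-revised}, Corollary~\ref{cor:cone-expansion-revised} and the linear inverse estimate, and the contraction from the bilinear identity together with the cubic difference estimate. You then apply the Banach fixed-point theorem on $X_r$ modulo constants, and recover the profile equation from $\mathcal L\mathcal L^{-1}F=F$ and $\mathcal Lu_*=0$.

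The differences are cosmetic. You fix $K$ directly from the $K$-independent constant $C_1$ under the side condition $K|\varepsilon|\le 1$, whereas the paper sets $K=4C_0$ and imposes $K\varepsilon^2\le|\varepsilon|$. Your handling of representatives is at the same level of detail as the paper's.
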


\begin{proof}
Let \(v\in X_r\) and set \(h_v:=\varepsilon u_*+v\).
By Lemma~\ref{lem:Xr-cone-embedding-revised},
\[
 \mathfrak N_r(h_v)\lesssim_{r,\sigma/\mu}|\varepsilon|+\|v\|_{X_r}.
\]
The quadratic estimate, the cubic remainder estimate, and the linear
inverse give, for small arguments,
\begin{equation}
\label{eq:Phi-self-map-short}
 \|\Phi_\varepsilon(v)\|_{X_r} \leq C_0 \bigl( |\varepsilon|+\|v\|_{X_r} \bigr)^2.
\end{equation}

Let \(h_i:=\varepsilon u_*+v_i\) and \(\delta v:=v_1-v_2\).
The bilinear identity, Corollary~\ref{cor:cone-expansion-revised}, and the
linear inverse estimate yield
\begin{equation}
\label{eq:Phi-contraction-short}
 \|\Phi_\varepsilon(v_1) -\Phi_\varepsilon(v_2)\|_{X_r}\leq C_1 \bigl( |\varepsilon| +\|v_1\|_{X_r} +\|v_2\|_{X_r} \bigr) \|v_1-v_2\|_{X_r}
\end{equation}
after reducing the common smallness threshold.

Set \(K=4C_0\) and choose \(\varepsilon_{r,\sigma/\mu}>0\) so that
\begin{equation*}
\begin{aligned}
 K\varepsilon^2\leq|\varepsilon|,\quad
 C_{r,\sigma/\mu} \bigl( |\varepsilon|+K\varepsilon^2 \bigr) \leq\delta_{r,\sigma/\mu},\quad
 C_1 \bigl( |\varepsilon|+2K\varepsilon^2 \bigr) \leq\frac12,
\end{aligned}
\end{equation*}
for every \(|\varepsilon|\leq\varepsilon_{r,\sigma/\mu}\).

If \(v\in\mathbb B_\varepsilon^X\), then
\[
 |\varepsilon|+\|v\|_{X_r} \leq |\varepsilon|+K\varepsilon^2 \leq 2|\varepsilon|.
\]
Thus \eqref{eq:Phi-self-map-short} gives
\[
 \|\Phi_\varepsilon(v)\|_{X_r} \leq C_0(2|\varepsilon|)^2 = K\varepsilon^2.
\]
For \(v_1,v_2\) in the ball, \eqref{eq:Phi-contraction-short} gives
\[
 \|\Phi_\varepsilon(v_1) -\Phi_\varepsilon(v_2)\|_{X_r} \leq \frac12\|v_1-v_2\|_{X_r}.
\]
Since \(X_r\), modulo additive constants, is a Banach space, Banach's
fixed-point theorem gives a unique fixed-point class in
\(\mathbb B_\varepsilon^X\). The image of \(\Phi_\varepsilon\) lies in
the canonical normalized range of \(\mathcal L^{-1}\), so this class has
a unique representative \(v_\varepsilon\) in that range. In particular,
\(v_\varepsilon=\Phi_\varepsilon(v_\varepsilon)\) holds in
\(\mathcal S'(\R^2)\).

By the definition of \(\Phi_\varepsilon\) and
Proposition~\ref{prop:capillary-remainder-revised},
\[
 v_\varepsilon = \frac{3\sigma}{\mu}\mathcal L^{-1} \mathscr F(\varepsilon u_*+v_\varepsilon).
\]
Since \(\mathcal L^{-1}\) is a distributional right inverse and
\(\mathcal Lu_*=0\), \(U_\varepsilon=\varepsilon u_*+v_\varepsilon\)
satisfies
\[
 \mathcal LU_\varepsilon = \frac{3\sigma}{\mu}\mathscr F(U_\varepsilon).
\]
Finally, since \(v_\varepsilon\in\mathbb B_\varepsilon^X\),
\[
 \|v_\varepsilon\|_{X_r}\leq K\varepsilon^2,
\]
which proves \eqref{eq:v-fixed-point-bound}.
\end{proof}

The same fixed-point identity also determines the first nonlinear correction to the profile.

\begin{proposition}[Second-order expansion of the profile]
\label{prop:second-order-profile-expansion}
Let \(v_\varepsilon\) be the correction constructed in Proposition~\ref{prop:fixed-point-construction-revised}, and let \(v^{(2)}\) be the quadratic correction defined in Theorem~\ref{t:MainTheorem}.
Then \(v^{(2)}\in X_r\) and the second-order estimate stated in Theorem~\ref{t:MainTheorem} holds.
In particular,
\[
 \frac{v_\varepsilon}{\varepsilon^2} \longrightarrow v^{(2)}
 \qquad\text{in }X_r
 \quad\text{as }\varepsilon\to0,\quad \varepsilon\neq0.
\]
\end{proposition}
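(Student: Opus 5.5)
The plan is to subtract the fixed-point identity for \(v_\varepsilon\) from the defining identity for \(\varepsilon^2 v^{(2)}\) and estimate the difference with the linear inverse bound.

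\textbf{Step 1: \(v^{(2)}\in X_r\).} By Lemma~\ref{lem:regularized-cone-revised}, \(u_*\in\dot H^{5/2}\cap\dot H^{r+2}\) and \(D^2u_*\in L^\infty\). Hence Lemma~\ref{lem:quadratic-cancellation-revised}, estimate \eqref{eq:B-estimate-revised}, gives
\[
\|\mathcal B(u_*,u_*)\|_{\dot H^r}\lesssim_r \|D^2u_*\|_{L^\infty}\|u_*\|_{\dot H^{r+2}}<\infty .
\]
By Lemma~\ref{lem:quadratic-compatibility}, \(\mathcal B(u_*,u_*)\) is an actual tempered distribution. Proposition~\ref{prop:linear-inverse-Sobolev} then yields \(v^{(2)}=\frac{3\sigma}{\mu}\mathcal L^{-1}\mathcal B(u_*,u_*)\in X_r\), with norm bounded by \(C_{r,\sigma/\mu}\).

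\textbf{Step 2: algebraic decomposition.} Set \(h=\varepsilon u_*+v_\varepsilon\). Bilinearity and symmetry of \(\mathcal B\) (on the extended class, which is canonical by Lemma~\ref{lem:quadratic-cancellation-revised}) give
\[
\mathcal B(h,h)-\varepsilon^2\mathcal B(u_*,u_*) = 2\varepsilon\,\mathcal B(u_*,v_\varepsilon)+\mathcal B(v_\varepsilon,v_\varepsilon).
\]
Since \(\mathcal L^{-1}\) is linear, the fixed-point identity of Proposition~\ref{prop:fixed-point-construction-revised} gives
\[
v_\varepsilon-\varepsilon^2v^{(2)} = \frac{3\sigma}{\mu}\mathcal L^{-1}\Bigl[2\varepsilon\,\mathcal B(u_*,v_\varepsilon)+\mathcal B(v_\varepsilon,v_\varepsilon)+\mathscr R_{\geq3}(h)\Bigr].
\]
This identity holds as actual distributions, because both sides use the canonical normalized representatives.

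\textbf{Step 3: estimates.} By Lemma~\ref{lem:Xr-cone-embedding-revised}, \(\mathfrak N_r(v_\varepsilon)\lesssim\|v_\varepsilon\|_{X_r}\lesssim\varepsilon^2\). In particular \(D^2 v_\varepsilon\in L^\infty\) and \(v_\varepsilon\in\dot H^{5/2}\cap\dot H^{r+2}\), so \eqref{eq:B-estimate-revised} applies to each bilinear term:
\[
\|\varepsilon\mathcal B(u_*,v_\varepsilon)\|_{\dot H^r}\lesssim|\varepsilon|\,\mathfrak N_r(u_*)\,\|v_\varepsilon\|_{X_r}\lesssim|\varepsilon|^3,
\qquad
\|\mathcal B(v_\varepsilon,v_\varepsilon)\|_{\dot H^r}\lesssim\varepsilon^4 .
\]
Corollary~\ref{cor:cone-expansion-revised}, valid for small \(\varepsilon\), gives
\[
\|\mathscr R_{\geq3}(h)\|_{\dot H^r}\lesssim(|\varepsilon|+K\varepsilon^2)^3\lesssim|\varepsilon|^3 .
\]
Applying \eqref{eq:linear-inverse-Sobolev} then gives \(\|v_\varepsilon-\varepsilon^2v^{(2)}\|_{X_r}\le C|\varepsilon|^3\). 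Dividing by \(\varepsilon^2\) yields \(\|v_\varepsilon/\varepsilon^2-v^{(2)}\|_{X_r}\le C|\varepsilon|\to0\).

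The main point needing care is Step 2. One must ensure that the bilinear expansion is legitimate for the extended, rough-input \(\mathcal B\), and that all representatives match. Bilinearity passes to the limit from the normalized truncations. The additive constants are handled by the remark after Proposition~\ref{prop:linear-inverse-Sobolev}: \(\mathcal L^{-1}\) commutes with adding constants, so the identity holds at least modulo constants, which is exactly the sense of the \(X_r\) norm.
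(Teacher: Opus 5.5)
Your proposal is correct and follows the paper's proof essentially step for step. You bound $\mathcal B(u_*,u_*)$ via \eqref{eq:B-estimate-revised} and Lemma~\ref{lem:regularized-cone-revised} to get $v^{(2)}\in X_r$, subtract $\varepsilon^2 v^{(2)}$ from the fixed-point identity to isolate $2\varepsilon\mathcal B(u_*,v_\varepsilon)+\mathcal B(v_\varepsilon,v_\varepsilon)+\mathscr R_{\geq3}(h)$, and close with the bilinear estimate, Corollary~\ref{cor:cone-expansion-revised}, and \eqref{eq:linear-inverse-Sobolev}. Your remarks on the bilinearity of the extended $\mathcal B$ and on matching representatives modulo constants make explicit what the paper leaves implicit, but they do not change the argument.
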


\begin{proof}
The quadratic estimate \eqref{eq:B-estimate-revised} and Lemma~\ref{lem:regularized-cone-revised} give
\[
 \|\mathcal B(u_*,u_*)\|_{\dot H^r} \leq C_r \|D^2u_*\|_{L^\infty} \|u_*\|_{\dot H^{r+2}} <\infty.
\]
Hence Proposition~\ref{prop:linear-inverse-Sobolev} implies
\[
 v^{(2)}\in X_r, \qquad \|v^{(2)}\|_{X_r}\leq C_{r,\sigma/\mu}.
\]
Set \(h_\varepsilon:=\varepsilon u_*+v_\varepsilon\).
Expanding the fixed-point equation and subtracting the quadratic term gives
\begin{equation*}
\begin{aligned}
 v_\varepsilon-\varepsilon^2v^{(2)} = \frac{3\sigma}{\mu}\mathcal L^{-1} \Big[ 2\varepsilon\mathcal B(u_*,v_\varepsilon) +\mathcal B(v_\varepsilon,v_\varepsilon) +\mathscr R_{\geq3}(h_\varepsilon) \Big].
\end{aligned}
\end{equation*}
The bilinear estimate, \eqref{eq:v-fixed-point-bound}, and
Corollary~\ref{cor:cone-expansion-revised} give
\begin{equation*}
 \|\mathcal B(u_*,v_\varepsilon)\|_{\dot H^r}\lesssim_{r,\sigma/\mu}\varepsilon^2, \quad
 \|\mathcal B(v_\varepsilon,v_\varepsilon)\|_{\dot H^r}\lesssim_{r,\sigma/\mu}\varepsilon^4,\quad
 \|\mathscr R_{\geq3}(h_\varepsilon)\|_{\dot H^r} \lesssim_{r,\sigma/\mu}|\varepsilon|^3.
\end{equation*}
The linear inverse estimate now proves
\(\|v_\varepsilon-\varepsilon^2v^{(2)}\|_{X_r}
\lesssim_{r,\sigma/\mu}|\varepsilon|^3\).
\end{proof}

We conclude by returning from the profile equation to the
time-dependent Muskat problem.
\begin{proof}[Proof of Theorem~\ref{t:MainTheorem}]
Proposition~\ref{prop:fixed-point-construction-revised} gives
\(U_\varepsilon=\varepsilon u_*+v_\varepsilon\) and
\[
 U_\varepsilon - x\cdot\nabla U_\varepsilon + \frac{3\sigma}{\mu}G(U_\varepsilon)\kappa(U_\varepsilon) = 0
\]
in \(\mathcal S'(\R^2)\).  Moreover,
\[
 \mathfrak N_r(U_\varepsilon) \leq C_{r,\sigma/\mu} \bigl( |\varepsilon|+\|v_\varepsilon\|_{X_r} \bigr),
\]
while \eqref{eq:D-Delta-h-control} and Lemma~\ref{lem:curvature-revised}
give \(\mathcal D_r(\kappa(U_\varepsilon))<\infty\).  Thus the energy
Dirichlet--Neumann construction of Proposition~\ref{prop:finite-order-DN}
applies.

Define
\[
 \eta_\varepsilon(t,x) := t^{1/3}U_\varepsilon(t^{-1/3}x), \qquad t>0.
\]
With the self-similar variable \(y=t^{-1/3}x\),
\[
 \partial_t\eta_\varepsilon(t,x) = \frac13t^{-2/3} \left( U_\varepsilon(y) - y\cdot\nabla U_\varepsilon(y) \right).
\]

Put \(s=t^{1/3}\).  The curvature scaling follows directly from
\(\eta_\varepsilon(t,x)=sU_\varepsilon(x/s)\).  To verify the weak
Dirichlet--Neumann scaling, let \(\phi_f\) be the energy harmonic extension
of \(f\) in \(\Omega_{U_\varepsilon}\), and define
\[
 \phi_{f,s}(x,y):=s^{-1}\phi_f(x/s,y/s).
\]
Then \(\phi_{f,s}\) is harmonic in \(\Omega_{sU_\varepsilon(\cdot/s)}\),
has boundary trace \(s^{-1}f(\cdot/s)\), and belongs to the energy class.
Moreover,
\(\nabla_{x,y}\phi_{f,s}(x,y)=s^{-2}(\nabla_{x,y}\phi_f)(x/s,y/s)\).
Since the graph slope is unchanged by this scaling, uniqueness of the
energy extension and the conormal-trace definition give
\[
\begin{aligned}
 \kappa(\eta_\varepsilon(t))(x) &= t^{-1/3}\kappa(U_\varepsilon)(y),\\
 G(\eta_\varepsilon(t)) \left[ t^{-1/3}f(t^{-1/3}\,\cdot\,) \right](x) &= t^{-2/3} \bigl(G(U_\varepsilon)f\bigr)(y).
\end{aligned}
\]
Taking \(f=\kappa(U_\varepsilon)\), the profile equation then gives
\[
 \partial_t\eta_\varepsilon = -\frac{\sigma}{\mu}G(\eta_\varepsilon)\kappa(\eta_\varepsilon), \qquad t>0.
\]

If \(p(t)\) is the energy harmonic extension of
\(\sigma\kappa(\eta_\varepsilon(t))\) and
\[
 u(t):=-\mu^{-1}\nabla_{x,y}p(t),
\]
then Darcy's law, incompressibility, and the dynamic condition hold by
construction, while the graph equation gives the kinematic condition.

The profile estimates follow from
Proposition~\ref{prop:fixed-point-construction-revised} and
Proposition~\ref{prop:second-order-profile-expansion}.

By Remark~\ref{rem:initial-profile},
\[
 t^{1/3}u_*(t^{-1/3}x) \longrightarrow |x|
\]
locally uniformly in \(x\).
For the normalized \(v_\varepsilon\), Morrey's estimate gives, with
\(\lambda=t^{-1/3}\),
\[
 t^{1/3}|v_\varepsilon(t^{-1/3}x)|
 \leq\lambda^{-1}|v_\varepsilon(0)|
 +C_r\lambda^{r-2}|x|^{r-1}\|v_\varepsilon\|_{\dot H^r}
 \longrightarrow0,
\]
locally uniformly as \(t\downarrow0\).
Hence \(\eta_\varepsilon(t,x)\to\varepsilon|x|\) locally uniformly.
For \(\varepsilon\neq0\), this trace is nonzero, and the solution is therefore nontrivial.
\end{proof}

\bibliographystyle{plain}
\bibliography{HWW}

\end{document}